\def\sfd{{\sf d}}
\def\sfp{{\sf p}}
\def\be{\begin{equation}}
\def\ee{\end{equation}}
\def\bea{\begin{eqnarray}}
\def\eea{\end{eqnarray}}
\def\beas{\begin{eqnarray*}}
\def\eeas{\end{eqnarray*}}
\def\im{\item}
\def\calm{{\cal m}}
\def\half{{\frac{1}{2}}}
\newif\ifcol
\newcommand{\blue}{\color[rgb]{0,0,0.8}}
\newcommand{\colorc}{\color{cyan}}
\newcommand{\colorr}{\color[rgb]{0.8,0,0}}
\newcommand{\colorg}{\color[rgb]{0,0.5,0}}
\newcommand{\colorb}{\color[rgb]{0,0,0.8}}
\newcommand{\colorn}{\color[rgb]{1,1,1}}
\newcommand{\coloro}{\color[rgb]{1,0.851,0}}
\newcommand{\coloroy}{\color[rgb]{1,0.95,0}}
\newcommand{\colorrz}{\color[rgb]{0.8,0,0}}
\newcommand{\blue}{\color{black}}
\newcommand{\colorc}{\color{black}}
\newcommand{\colorr}{\color{black}}
\newcommand{\colorg}{\color{black}}
\newcommand{\colorb}{\color{black}}
\newcommand{\colorn}{\color{black}}
\newcommand{\coloro}{\color{black}}
\newcommand{\coloroy}{\color{black}}
\newcommand{\colorrz}{\color{black}}
\def\koko{{\coloroy{koko}}}
\def\D2{\bbD_{2,\infty-}}
\newtheorem{theorem*}{Theorem}[section]
\newtheorem{note*}[theorem*]{Note}
\newtheorem{lemma*}[theorem*]{Lemma}
\newtheorem{definition*}[theorem*]{Definition}
\newtheorem{proposition*}[theorem*]{Proposition}
\newtheorem{remark*}[theorem*]{Remark}
\newtheorem{example*}[theorem*]{Example}
\numberwithin{equation}{section}
\def\D{{\bf D}}
\def\F{{\bf F}}
\def\calb{{\cal B}}
\def\cald{{\cal D}}
\def\calf{{\cal F}}
\def\cali{{\cal I}}
\def\calm{{\cal M}}
\def\calx{{\cal X}}
\def\sskip{\hspace{0.5cm}}
\def\ep{\epsilon}
\def\half{\frac{1}{2}}
\def\Iku{\Rightarrow}
\def\up{\uparrow}
\def\down{\downarrow}
\def\nn{\nonumber}
\def\be{\begin{equation}}
\def\ee{\end{equation}}
\def\bea{\begin{eqnarray}}
\def\eea{\end{eqnarray}}
\def\beas{\begin{eqnarray*}}
\def\eeas{\end{eqnarray*}}
\def\odg{{\dot{\lambda}}}
\def\dodg{{\dot{g}}}
\def\ddodg{{\ddot{g}}}
\def\l{\left}
\def\r{\right}
\newtheorem{remark}{Remark}
\def\D{{\bf D}}
\def\F{{\bf F}}
\def\calb{{\cal B}}
\def\cald{{\cal D}}
\def\calf{{\cal F}}
\def\cali{{\cal I}}
\def\calm{{\cal M}}
\def\calx{{\cal X}}
\def\sskip{\hspace{0.5cm}}
\def\ep{\epsilon}
\def\half{\frac{1}{2}}
\def\Iku{\Rightarrow}
\def\up{\uparrow}
\def\down{\downarrow}
\def\halflineskip{\vspace*{3mm}}
\def\nn{\nonumber}
\def\be{\begin{equation}}
\def\ee{\end{equation}}
\def\bea{\begin{eqnarray}}
\def\eea{\end{eqnarray}}
\def\beas{\begin{eqnarray*}}
\def\eeas{\end{eqnarray*}}
\def\bi{\begin{itemize}}
\def\ei{\end{itemize}}
\def\im{\item}
\def\bd{\begin{description}}
\def\ed{\end{description}}
\def\l{\left}
\def\r{\right}
\newcommand{\bbB}{{\mathbb B}}
\newcommand{\bbD}{{\mathbb D}}
\newcommand{\bbE}{{\mathbb E}}
\newcommand{\bbH}{{\mathbb H}}
\newcommand{\bbN}{{\mathbb N}}
\newcommand{\bbP}{{\mathbb P}}
\newcommand{\bbR}{{\mathbb R}}
\newcommand{\bbU}{{\mathbb U}}
\newcommand{\bbV}{{\mathbb V}}
\newcommand{\bbX}{{\mathbb X}}
\newcommand{\bbY}{{\mathbb Y}}
\newcommand{\bbZ}{{\mathbb Z}}
\begin{document}
\title{%Quasi likelihood analysis for point process regression models 
Quasi likelihood analysis of point processes for ultra high frequency data%for arXiv. 
%\footnote{
%This work was in part supported by 
%Japan Society for the Promotion of Science Grants-in-Aid for Scientific Research No. 24340015 (Scientific Research), 
%No. 24650148 (Challenging Exploratory Research); 
%and by a Cooperative Research Program of the Institute of Statistical Mathematics. 
%}
\footnote{
This work was in part supported by 
Japan Society for the Promotion of Science Grants-in-Aid for Scientific Research Nos. 24340015 
%and 24300107 %Uchida's 
(Scientific Research), 
Nos. 24650148 and 26540011 (Challenging Exploratory Research); 
%the Global COE program ``The Research and Training Center for New Development in Mathematics'' 
%of the Graduate School of Mathematical Sciences, University of Tokyo; 
CREST Japan Science and Technology Agency; 
NS Solutions Corporation; 
and by a Cooperative Research Program of the Institute of Statistical Mathematics.
%{\colorred The authors thank the referee for valuable comments.}
}
}

\author[1,2,4]{Teppei Ogihara}
\author[3,4,1]{Nakahiro Yoshida}
\affil[1]{Institue of Statistical Mathematics\footnote
        {Institue of Statistical Mathematics: 10-3 Midori-cho, Tachikawa, Tokyo 190-8562, Japan}
        }
\affil[2]{SOKENDAI (The Graduate University 
for Advanced Studies)\footnote
        {Shonan Village, Hayama, Kanagawa 240-0193, Japan}
        }
\affil[3]{Graduate School of Mathematical Sciences, University of Tokyo\footnote
        {Graduate School of Mathematical Sciences, University of Tokyo: 3-8-1 Komaba, Meguro-ku, Tokyo 153-8914, Japan}
        }
\affil[4]{CREST, Japan Science and Technology Agency%\footnote{}
        }

%\date{}
%\date{\today}
\maketitle
\noindent
%\input{abst.tex}
%%%%%%%%% ABSTRACT %%%%%%%%%%%%%%%%%%%%%%%%%%%%%%%%%%%%%%%%%%%%%%%%%%

\noindent
\begin{abstract}\noindent
We introduce a point process regression model that is applicable to 
price models and limit order book models. 
Hawkes type autoregression in the intensity process is generalized to a 
stochastic regression to covariate processes. 
We establish the so-called 
quasi likelihood analysis, which gives a polynomial type large deviation estimate  
for the statistical random field. 
We derive large sample properties of the maximum likelihood type estimator and 
the Bayesian type estimator when the intensity processes become large 
under a finite time horizon. There appears non-ergodic statistics.
A classical approach is also mentioned.

%Lead-lag estimation 
\end{abstract}
\begin{en-text}
We consider a stochastic regression model defined by a stochastic differential equation. 
%propose an information criterion $sVIC_x$ for selecting a spot-volatility model. 
Based on an expected Kullback-Leibler information for the predictive spot volatility,
we propose and validate an information criterion $sVIC_x$ for selection of volatility models. 
Some variants of $sVIC_x$ are also discussed. 
We give examples and simulation results of 
model selection based on the information criterion.
\end{en-text}

\vspace{5pt} 

\noindent
{\bf Key words and phrases:} point process, regression, 
quasi likelihood analysis, maximum likelihood estimator, Bayesian estimator, 
Hawkes process, price model, limit order book, 
polynomial type large deviation, 
statistical random field, convergence of moments.

%\vspace{5pt} 

%\noindent
%{\bf AMS 2000 subject classifications:} ***** Primary 62H30, 62M05; Secondary 62F12.
\vspace{5pt} 

\noindent
%{\bf Abbreviated Title:} Discriminant analysis for SDE.

%%%%%%%% END OF ABSTRACT %%%%%%%%%%%%%%%%%%%%%%%%%%%%%%%%%%%%%%%

%\input{sec1.tex}
%%%%%%%%%%%%% SECTION 1 %%%%%%%%%%%%%%%%%%%%%%%%%%%%%%%%%%%%%%%%%

\section{Introduction}

High-frequency financial data is one of the latest objects to
be challenged by the most advanced statistics. Ad hoc
descriptive methods often mislead and fail data analysis. For
two stochastic processes observed high-frequently and
asynchronously, the estimated covariance between them almost
vanishes if one applies the realized covariance with a ``natural''
interpolation method, even if the real covariance is not null.
Recently it was recognized that the non-synchronicity of
sampling schemes, though it is inevitable for real financial
data, causes such phenomena generically called the Epps effect.
Theory of non-synchronous estimation has been dramatically
developing for the last decade and successfully applied to
actual data analyses.
Market microstructure is another main factor that causes
the Epps effect. Remarkable progresses were recently made in
volatility estimation problems by proposing effective filters
that remove microstructure noises and at the same time treat
non-synchronous sampling schemes. Non-synchronicity and
microstructure are now the point where theoretical statistics,
probability theory and real data analysis are confluent 
{\blue (
% Nonsynchronous 
Epps \cite{Epps1979}, 
Malliavin and Mancino \cite{MalliavinMancino2002,malliavin2009fourier}, 
Hayashi and Yoshida \cite{HayashiYoshida2005,HayashiYoshida2008,
HayashiYoshida2011}, 
Voev and Lunde \cite{voev2007integrated}, 
Griffin and Oomen \cite{griffin2011covariance}, 
Mykland \cite{mykland2012gaussian}, 
%
% Microstructure
Zhou \cite{zhou1996high}, 
Zhang et al. \cite{zhang2005tale}, 
Zhang \cite{zhang2006efficient}, 
Podolskij and Better \cite{podolskij2009estimation},
Jacod et al. \cite{jacod2009microstructure}
%
% Nonsynchronous + microstructure
Christensen et al. \cite{christensen2010pre}, 
Bibinger \cite{bibinger2011efficient,bibinger2012estimator}, 
Ogihara and Yoshida \cite{ogihara2014quasi}, 
Koike \cite{koike2013estimation, koike2014limit,koike2014estimator},  
Ogihara \cite{ogihara2015local,ogihara2014parametric} 
among many others). 
}

{\blue The} very latest issue is modeling of ultra high frequency
phenomena by point processes. It enables us to model
microstructure itself rather than eliminating it as noise. In
ultra high frequency sampling, the central limit theorem does
not work and there is no longer Brownian motion as the driving factor of asset prices, differently
from the standard mathematical finance. The world of real data
is already beyond a standard theory but this is the reality
statisticians are confronted with.

Statistical theory of non-synchronous data suggests
relativity of time. Theory of lead-lag estimation emerged
against this background. When observing two time series, we
often find lead-lag between them, namely, one is the leader
and the other is the follower. If these series are stock prices,
this means the existence of {\blue statistical} arbitrage. 
\begin{en-text}
This fact contradicts
the principle of mathematical finance, however it is commonly recognized 
in the financial world today. 
%We invite also reports of challenges to lead-lag problems.
\end{en-text}
{\blue Some developments for high frequency data are in 
de Jong and Nijman \cite{Jong1997}, 
\cite{HoffmannRosenbaumYoshida2013} and Abergel and Huss \cite{abergel2012high}.}

In this article, we consider a point process regression model that enables us
to express nonsynchronicity of observations, lead-lag relation and microstructure.
Our model can describe self-exciting/self-correcting effects of the point
processes as well as exogenous effects. Non-ergodic statistics is constructed
in the QLA (quasi likelihood analysis) framework. The point process regression
model has applications to price models and limit order book models.

\begin{en-text}
In the present article, we introduce a point process model that enables us to 
express asynchronicity of the sampling and microstructure. 
Our model describes self-exciting/self-correcting effects of the point processes 
as well as exogenous effects. 
The lead-lag structure is also incorporated in the model, and 
we will propose an estimation method of the lead-lag. 
\end{en-text}
%%

%%%%%%%%%%%%%%%%%%%%%%%%%%%%%
%%%%%%%%%%%%%%%%%%%%%%%%%%%%%
%%%%%%%%%%%%%%%%%%%%%%%%%%%%%
\section{Point process regression model}

\begin{en-text}
Let $\cali=\{1,...,\sfd\}$. 
The multi-variate point process $N^n=(\bbN^{n,\alpha})_{\alpha\in\cali}$ is assumed to have a  $$\sfd$-dimensional intensity process 
$n\lambda^n(t,\theta^*,\gamma^*,X^n)
=(n\lambda^{n,\alpha}(t,\theta^*,\gamma^*,\bbN))_{\alpha\in\cali}$ 
defined by %$\lambda^n=(\lambda^{n,\alpha})_{\alpha=0,1,...,\sfd}$
\beas 
\lambda^{n,\alpha}(t,\theta^*,\gamma^*,\bbN)
&=& 
g^{n,\alpha}(t,\gamma^*)+\int_{T_0}^{t-}
K^{n,\alpha}_\beta(t,t+\Delta_\beta^*-\Delta_\alpha^*-s,\gamma^*)dX^{n,\beta}_s,
\eeas
where Einstein's rule for repeated indices is applied to $\beta$. 
$\gamma^*$ is a parameter that specifies the true structure in a parametric model 
introduced later. 
The point process $\bbN^{n,\alpha}$ forms a model whose intensity processes refer to 
the covariates $g^{n,\alpha}$ and $K^{n,\alpha}_\beta$ but with lags. 
\end{en-text}

The $\sfd$-dimensional point process $N^n=(N^{n,\alpha})_{\alpha\in\cali}$ 
on the interval $I=[T_0,T_1]$, $\cali=\{1,...,\sfd\}$,  
is assumed to have a $\sfd$-dimensional intensity process 
$n\lambda^n(t,\theta)$ 
defined by %$\lambda^n=(\lambda^{n,\alpha})_{\alpha=0,1,...,\sfd}$
\beas 
\lambda^n(t,\theta)
&=& 
g^n(t,\theta)+\int_{\hat{T}_0}^{t-}
K^n(t,s,\theta)dX^n_s,
\eeas
where 
$\theta$ is a parameter. % that specifies the true structure in a parametric model introduced later. 
The point process $N^n$ forms a model whose intensity processes refer to 
the covariates $g^n$ and 
$K^n$ as well as the explanatory process $X^n$.\footnote{It is possible to make $g^n(t,\theta)$ include 
the part $\int_{\hat{T}_0}^{T_0}K^n(t,s,\theta)dX^n_s$. However this definition fits Hawkes type processes we will discuss later. }
More precisely, we will work on a stochastic basis $\calb=(\Omega,\calf,{\bf F},P)$, 
${\bf F}=(\calf_t)_{t\in \hat{I}}$ being a filtration on $(\Omega,\calf)$, where $\hat{I}=[\hat{T}_0,T_1]
\supset I$. % and $n\in\bbN$. 
For each $n\in\bbN$ and $\theta\in\Theta$, 
$(g^n(t,\theta))_{t\in I}$ is a $\sfd$-dimensional predictable process,  
$(K^n(t,s,\theta))_{s\in[\hat{T}_0,t)}$ is a $\sfd\times\sfd_0$ matrix-valued optional process 
for $t\in I$, 
$\cali_0=\{1,...,\sfd_0\}$, 
and $(X^n_t)_{t\in\hat{I}}$ is a $\sfd_0$-dimensional ${\bf F}$-adapted right-continuous increasing {\blue (i.e., non-decreasing)} process on $\calb$. 
We will impose conditions that ensure the existence of those stochastic integrals later. 
The multivariate point process $N^n$ is compensated by the process 
$(\int_{T_0}^t n\lambda^n(s,\theta)ds)_{t\in I}$ when $\theta$ is the true value of the unknown parameter. 
{\colorb We will assume that any two elements of $N^n$ do not share common jumps.
\footnote{{\colorb This assumption is necessary to specify the asymptotic variance of 
the estimators, as will be done later.}}}

Applications of point processes to financial data were in 
Hewlett \cite{hewlett2006clustering} on the clustered arrivals of buy and sell trades using Hawkes processes, 
Large \cite{large2007measuring} on extension by using a fine description of orders, 
Bowsher \cite{bowsher2007modelling} on a generalized Hawkes model, 
and {\blue in} Bacry et al. \cite{bacry2013modelling} on a price model. 
Chen and Hall \cite{chen2013inference} investigated the maximum likelihood estimator 
of a non-stationary self-exciting point process when the intensity goes up.

Obviously, the transaction times of stocks whose occurrence intensities possibly depend on their own or exogenous randomly changing factors 
can be described by a point process regression model. % {\colorr references here for price models.}
%Counting Twitter data gives another example. 
The point process regression model also applies to micro-scale modeling of the movements of the stock prices, 
incorporating information of covariate processes. 

Recently, point processes are applied to order-book modeling; see 
Cont et al. \cite{cont2010stochastic} and Abergel and Jedidi \cite{abergel2013mathematical,  abergel2015long}, and also 
Smith et al. \cite{smith2003statistical}. 
Abergel and Jedidi \cite{abergel2013mathematical,abergel2015long} presented an order book model by a multivariate point process 
and proved ergodicity of the system in infinite time horizon by using the drift condition for the Markov chain. 
Muni Toke and Pomponio \cite{muni2011modelling} gave a model of trades-through in a limited order book using Hawkes processes. 

Our point process regression model has finite time horizon and the resulting statistics becomes non-ergodic. 
%{\colorr more references here. }
We shall give short descriptions of the last two examples, before going into the main part of this article.

\subsection{Modeling digital movements of stock prices}
The point process $N^n$ is fairly generic. 
For ultra high-frequency financial data, we model the movements of the prices $Y_t$ 
by the combination of the components of the multi-variate point process 
$N^n=(N^{n,\alpha})_{\alpha\in\cali}$, e.g., by 
\bea\label{AN} 
Y_t &=& AN^n_t,
\eea
where $A$ is a constant matrix. 
In the following examples, $N$ denotes $N^n$ for notational simplicity. 

\begin{example*}\label{one-unit}\rm ($\pm$ one-unit jumps) 
Let $a$ denote a monetary unit. 
Let 
\beas 
A &=& 
a\l[\begin{array}{cccc}
1&-1&0&0\\
0&0&1&-1
\end{array}
\r]
\eeas
Then  
\beas 
Y &=& 
\l[\begin{array}{c}
a(N^{0}-N^{1}) \\
a(N^{2}-N^{3})
\end{array}
\r]
\eeas
for $N=[N^{0},N^{1},N^{2},N^{3}]'$.
\end{example*}

\begin{example*}\label{one/two}\rm ($\pm$ one/two-unit jumps)
Let 
\beas 
A &=& 
a\l[\begin{array}{cccccccc}
1&2&-1&-2&0&0&0&0\\
0&0&0&0&1&2&-1&-2
\end{array}
\r]
\eeas
Then  
\beas 
Y &=& 
\l[\begin{array}{c}
aN^{0}+2aN^{1}-aN^{2}-2aN^{3} \\
aN^{4}+2aN^{5}-aN^{6}-2aN^{7} 
\end{array}
\r]
\eeas
for $N=[N^{0},...,N^{7}]'$.
\end{example*}

\begin{example*}\rm (simultaneous jumps) 
Let 
\beas 
A &=& 
a\l[\begin{array}{cccccc}
1&-1&0&0&1&-1\\
0&0&1&-1&\pm1&\mp1
\end{array}
\r]
\eeas
Then  
\beas 
Y &=& 
\l[\begin{array}{c}
a(N^{0}-N^{1})+a(N^{4}-N^{5}) \\
a(N^{2}-N^{3})\pm a(N^{4}-N^{5})
\end{array}
\r]
\eeas
for $N^n=[N^{0},...,N^{5}]'$.
\end{example*}

In this way, ultimately, we can assume that our data is described by a multi-variate point process.

\begin{example*}\rm 
The Hawkes type process is an example if one takes $X^n_t=n^{-1}N^n_t$. 
Our setting also includes the models with $X^n_t=n^{-1}V^n_t$ and $X^n_t=(n^{-1}N^n_t,n^{-1}V^n_t)$ 
for covariates subordinator $V^n_t$ like the cumulative volume of the trades. 
\end{example*}

\begin{example*}\rm 
An example of the kernel function is 
$K^n(t,s,\theta)=c(\theta_1,\theta_2)(t-s)^{\theta_2}e^{-\theta_1(t-s)}$. 
This extends the original Hawkes process and it will be important when we discuss 
the lead-lag estimation in our framework. 
\end{example*}

\subsection{Limit oder book}\label{270723-1}

Multivariate point processes give an approach to modeling of the limit order book. 
For simplicity, we will assume  the volumes of limit orders, market orders and 
cancellation are a common value $q$ for all prices. 
%; see \cite{ogihara} for a generalization to random volumes. 
The state of the limit order book is described by the multi-dimensional process 
\beas
\bbX=((A^{\alpha})_{\alpha=1,...,k_A},(B^{\beta})_{\beta=1,...,k_B}), 
\eeas
where 
the process $A^{\alpha}$ counts the number of shares available at price $p_A^\alpha$ at time $t$ on the ask side 
and 
the process $B^{\beta}_t$ counts the number of shares available at price $p_B^\beta$ at time $t$ on the bid side. 
In this modeling, the state space of $\bbX$ is absolute or fixed.
{\blue 
The price $p_A^\alpha$ may denote a relative quoted price 
if one defines $p_A^\alpha$ as the price $\alpha$ ticks away from the best opposite quote, 
while it is also possible to consider a fixed state space.} 
The random evolution of $\bbX$ is determined by the processes 
$M^A$ counting number of arrivals of market orders on the ask side, 
$M^B$ of market orders on the bid side, 
$L^\alpha$ of limit orders at level $\alpha$ on the ask side, 
$L^\beta$ of limit orders at level $\beta$ on the bid side, 
$C^\alpha$ of cancellation at level $\alpha$ on the ask side, 
and $C^\beta$ of cancellation at level $\beta$ on the bid side. 
%\koko
 %
The multivariate counting process $N^n$ consists of these counting processes. 
Here prices can be recognized as a function of $\bbX$. 

\begin{en-text}
For modeling of $C^\alpha$ and $C^\beta$, we may {\blue consider} $g^{n,\alpha}(t,\theta)$  
proportional 
to $A^\alpha$ or $B^\beta$, or more complicated mechanism. 
Non degeneracy of the intensity processes 
for validating likelihood analysis 
seems to be problematic {\blue due to the shape of the quasi likelihood,} 
but it causes no difficulty thanks to a positive minimum unit {\blue of orders 
in the limit order book. 
\end{en-text}
%

%%%%%%%%%%%%%%%%%%%%%%%%%%%%%
%%%%%%%%%%%%%%%%%%%%%%%%%%%%%
%%%%%%%%%%%%%%%%%%%%%%%%%%%%%
%\section{Quasi maximum likelihood estimator}
\section{Quasi likelihood }
We shall consider estimation for the unknown parameter $\theta$. 
%Let $\Theta=[-\underline{\theta},\overline{\theta}]^\sfd$ for some 
%known positive constants $\underline{\theta}$ and $\overline{\theta}$ 
%satisfying $\underline{\theta}+\overline{\theta}<T$. 
%Let $T_0=\overline{\theta}$ and $T_1=T-\underline{\theta}$. 
%
\begin{en-text}
Let $\calx$ denote the set of 
$\sfd$-dimensional increasing function 
$N=(N^\alpha)_{\alpha\in\cali}=(N^\alpha_t)_{t\in\hat{I};\alpha\in\cali}$ 
such that each component $N^\alpha$ starts from $0$ at $t={\colorr\hat{T}_0}$ 
and is piecewise constant with finite number of jumps 
of size $1$. Each element $N$ of $\calx$ is identified with 
the random 
finite sums of delta measures on $I$. 
Equip $\calx$ with the $\sigma$-field generated by the mappings 
$\calx\ni N\mapsto N(B)$ for Borel measurable sets $B$ in $I$. 
\end{en-text}
Suppose that we have observations %$\bbN\in\calx$, %$\bbN=(\bbN^\alpha)_{\alpha\in\cali}$ 
%with components $\bbN^\alpha=(\bbN^\alpha_t)_{t\in[\hat{T}_0,\hat{T}_1]}$. %$t\in[-\overline{\theta},T+\underline{\theta}]$. 
%The true values of $\theta$ and $\gamma$ will be denoted by $\theta^*$ and $\gamma^*$, respectively. 
\beas 
(N^{n,\alpha}_t)_{t\in I,\alpha\in\cali}, \quad
(X^{n,\beta}_t)_{t\in \hat{I},\beta\in\cali_0}, \quad
(g^{n,\alpha}(t,\theta))_{t\in{\colorr I},\alpha\in\cali,\theta\in\Theta}, \quad
(K^{n,\alpha}_\beta(t,s,\theta))_{t\in I,s\in{\colorr[\hat{T}_0,t)},\alpha\in\cali,\beta\in\cali_0,\theta\in\Theta}
\eeas
{\blue up to $\theta$.}
This is the case, for example, when $g^{n,\alpha}(t,\theta)$ is a function of $\theta$ and some  
observable covariate process, {\blue that is, 
$g^{n,\alpha}(t,\theta)={\mathfrak g}(Z_t,\theta)$ for some observable 
covariate process $Z_t$ 
and a given function ${\mathfrak g}$. }
\begin{en-text}
{\colorb Let $\tilde{I}=[T_0-\delta_1-R,T_1+\delta_1]$.}
Define 
$\lambda^n(t,\theta,\gamma,\bbN)
=(\lambda^{n,\alpha}(t,\theta,\gamma,\bbN))_{\alpha\in\cali}$ ($t\in{\colorb \tilde{I}}$) 
by 
\bea\label{251121-5} 
\lambda^{n,\alpha}(t,\theta,\gamma,\bbN)
&=& 
g^{n,\alpha}(t,\gamma)+\int_{{\colorr\hat{T}_0}}^{(t+\Delta_\beta-\Delta_\alpha)-}
K^{n,\alpha}_\beta({\colorr t,}t+\Delta_\beta-\Delta_\alpha-s,\gamma)n^{-1}d\bbN^\beta_s
\eea
For {\colorb $t\geq T_0-\delta_1-R$}, if $K^{n,\alpha}_\beta(t',t+\Delta_\beta-\Delta_\alpha{\colorb-s})\not=0$, then 
\beas
s\geq t+\Delta_\beta-\Delta_\alpha-R\geq {\colorb T_0-2(\delta_1+R)}\geq \hat{T}_0. 
\eeas
In particular, (\ref{251121-5}) can be written as 
\begin{screen}\vspace{-4mm}
\bea\label{251121-6} 
\lambda^{n,\alpha}(t,\theta,\gamma,\bbN)
&=& 
g^{n,\alpha}(t,\gamma)+\int_{{\colorr t+\Delta_\beta-\Delta_\alpha-R}}^{(t+\Delta_\beta-\Delta_\alpha)-}
K^{n,\alpha}_\beta({\colorr t,}t+\Delta_\beta-\Delta_\alpha-s,\gamma)n^{-1}d\bbN^\beta_s
\eea
\end{screen}
for $t\in {\colorb \tilde{I}}$. 
{\colorr
Formula (\ref{251121-6}) has advantage because the integrand is continuous on the interval. 
$R$ is assumed to be known as a part of knowledge of $K^{n,\alpha}_\beta$. 
%However, if we assume $K^\alpha_\beta$ has a compact support and if we take $R$ sufficiently large, 
%we can estimate parameters 
If $K^{n,\alpha}_\beta$ vanishes smoothly on the positive axis, then 
the boundary effects by $R$ will not appear since we can use 
the expression (\ref{251121-5}) and smooth kernel $K^{n,\alpha}_\beta$ in this situation. }
\end{en-text}
A statistician models the phenomena by 
the point processes $N^\alpha$ 
with intensity processes $n\lambda^{n,\alpha}(t,\theta)$, choosing a large number $n$.  
Only the estimated function $n\lambda^{n,\alpha}$, not $\lambda^{n,\alpha}$, makes sense as a result of 
statistical analysis. 

We adopt the quasi log likelihood 
\beas 
l_n(\theta) 
&=& 
\sum_{\alpha\in\cali} \bigg(
\int_{T_0}^{T_1} \log [n\lambda^{n,\alpha}(t,\theta)]dN^{n,\alpha}_t
-\int_{T_0}^{T_1} [n\lambda^{n,\alpha}(t,\theta)-1]dt
\bigg)
\eeas
for observed point process $N^n$. 
Obviously, ``$-1$'' in the second integral can be eliminated for maximization. 
The factor ``$n$'' in the first integral is also unnecessary. 
Thus we can use 
\bea\label{251010-1}
\ell_n(\theta) 
&=& 
\sum_{\alpha\in\cali} \bigg(
\int_{T_0}^{T_1} \log\lambda^{n,\alpha}(t,\theta)dN^{n,\alpha}_t
-\int_{T_0}^{T_1} n\lambda^{n,\alpha}(t,\theta)dt
\bigg)
\eea
instead of $l_n(\theta)$. 
To estimate $\theta$, 
we will consider the quasi maximum likelihood estimator (QMLE), that is, a sequence of estimators $\hat{\theta}_n$ 
that maximizes or asymptotically maximizes $\ell_n(\theta)$. 
The quasi Bayesian estimator (QBE) is another option, as discussed later. 

{\colorb 
Hereafter, 
we suppose that $\Theta$ is a bounded open set in $\bbR^\sfp$ and satisfies 
%the condition that 
\bea\label{260810-1}
\inf_{\theta\in\Theta} \mbox{Leb}\big(\big\{\theta'\in\Theta; \> |\theta'-\theta|<\ep\big\}\big)
&\geq& a_0\big(\ep^\sfp\wedge1)
\sskip(\ep>0)
\eea
for some positive constant $a_0$, 
where 
$\mbox{Leb}$ is the Lebesgue measure.%on $\bbR^\sfp$. 
}
The true value of $\theta$ will be denoted by $\theta^*$.

%%%%%%%%%%%%%%
\begin{en-text}
$\lambda^\infty(t,\theta^*,\gamma^*)=(\lambda^{\infty,\alpha}(t,\gamma^*))_\alpha$ is a unique solution of 
\beas 
\lambda^\infty(t,\theta^*,\gamma^*)
&=& 
g^\infty(t,\theta^*,\gamma^*)+\int_0^tK(t-s,\gamma^*)\lambda^\infty(s,\theta^*,\gamma^*)ds
\eeas
\end{en-text}
%%%%%%%%%%%%%%

\begin{en-text}
Denote by $\bbP^n$ the distribution of the lagged multi-variate point process %$N=(N^\alpha)$ 
%(without lags) %(\ref{251110-1}) for $\theta=\theta^*$ with $N^\alpha=(N^\alpha_t)_{t\in[0,T]}$ having 
%with intensity processes $\lambda^n(t,\gamma^*,N)$, 
$\bbN$ given by (\ref{251117-2}), that is, 
$\bbP^n=(P^n)^{\bbN}$. 
\end{en-text}

\section{{\colorb Quasi maximum likelihood estimator by a classical approach}}
\subsection{{\colorb Consistency of the quasi maximum likelihood estimator}%: a classical approach
}\label{260810-2}
%\subsection{Assumptions and preliminary results}

%{\colorg Rewrite the following. $\down\down\down\down\down\down\down\down$\\} 
%%%%%%%%%%%%%%%%%%%%%%%
%%%%%%%%%%%%%%%%%%%%%%%
\begin{en-text}
In what follows, we will implicitly suppose that all random fields appearing in the assumptions  
are separable. That is, when a supremum $\bar{Y}(\omega,x_1,...,x_{k-1})=\sup_{x_k\in I_k,...,x_k\in I_m}Y(\omega,x_1,...,x_m)$ of 
a random field $Y(\omega,x_1,...,x_m)$ appears, 
we always assume the measurability of $\bar{Y}$ in $(\omega,x_1,...,x_{k-1})$.  
\end{en-text}
{\colorb In Section \ref{260810-2}, we suppose that 
the function $\Theta\ni\theta\mapsto\lambda^n(t,\theta)$ has continuous extension to $\bar{\Theta}$.}  
%\halflineskip
%%
%
Let $J=\{(t,s);\>s\in[\hat{T}_0,t),\>t\in I\}$. 
\def\mfi{\mathfrak I}
Denote by $\mfi$ the set of $\bbR_+$-valued left-continuous non-decreasing adapted processes on 
$\Omega\times I$. 
\bd
\item[[A1\!\!]]$_{\bar{j}}$
%{\bf (i)} 
For each $n\in\bbN$, 
$K^n(t,s,\theta)$ is an $\bbR_+^\sfd\otimes\bbR_+^{\sfd_0}$-valued  
$\calf\times\bbB(J)\times\bbB(\Theta)$-measurable\footnote{
{\colorb $\bbB(A)$ is the Borel $\sigma$-field for a topological space $A$.}}
function satisfying the following conditions. 
\bd
\im[(i)] 
For each $(n,t,\theta)\in \bbN\times I\times\Theta$, the process 
$[\hat{T}_0,t)\ni s\mapsto K^n(t,s,\theta)$ is %of Class $C^1_B$ and 
%$(\calf_s)_{s\in[\hat{T}_0,t)}$ 
optional on $\Omega\times[\hat{T}_0,t)$, and 
each path is continuous or has jump discontinuity at every point $s$. 
\footnote{\blue A mapping $f$ on $[a,b)$ is continuous or has jump discontinuity at 
every point $s$ if $f(s+)$ exist for all $s\in[a,b)$ and 
if $f(s-)$ exist for all $s\in(a,b)$. } 
%{\blue each path is right or left-continuous at every point $s\in[\hat{T}_0,t)$.\footnote{\blue 
%Right-continuity and left-continuity may depend on $s$ and $\omega(\in\Omega)$.} 

\im[(ii)] 
For each $(n,t,s)\in\bbN\times J$, the mapping 
$\Theta\ni\theta\mapsto K^{n}(t,s,\theta)$ is $\bar{j}$ times differentiable a.s., 
{\blue those derivatives are right or left-continuous in $s$ at every point $s\in[\hat{T}_0,t)$,}
and there exist $\dot{K}^n\in\mfi$ for $n\in\bbN$ such that 
\beas 
\sum_{j=0}^{\bar{j}}
%\sup_{n\in\bbN}
\sup_{t'\in[{\blue T_0},t]}
\sup_{s\in[\hat{T}_0,t')}\sup_{\theta\in\Theta} |(\partial_\theta)^j K^n(t',s,\theta)|
&\leq& 
\dot{K}^n(t)\quad (t\in I,\>n\in\bbN)
\eeas
and that the family 
$\displaystyle\big\{\dot{K}^n(T_1)\big\}_{n\in\bbN}$ is tight. 
\begin{en-text}
\im[(iii)] {\colorr This condition can be reduced to only continuity in $s$?} 
For each $(n,t,\theta)\in\bbN\times I\times\Theta$, the mapping 
$[\hat{T}_0,t)\ni s\mapsto K^n(t,s,\theta)$ is differentialble a.s.,  
%and 
%\item[(iv)] 
%For each $(n,t,\theta)\in\bbN\times I\times \Theta$, the mapping 
%$[\hat{T}_0,t)\ni s\mapsto K^n(t,s,\theta)$ is of class $C^1$ a.s. and the process 
%\beas 
%I\ni t \mapsto \sup_{n\in\bbN}\sup_{s< t}\sup_{\theta\in\Theta} 
%| \partial_s K^n(t,s,\theta)|
%\eeas
and there exist left-continuous adapted process $\dddot{K}^n$ on $\Omega\times I$ such that 
\beas 
\sup_{\theta\in\Theta} 
\int_{\hat{T}_0}^t| \partial_s K^n(t,s,\theta)|ds
&\leq& 
\dddot{K}^n(t)\quad (t\in I,\>n\in\bbN)
\eeas
is weakly locally bounded for each $n\in\bbN$ and that 
the family 
$\displaystyle\big\{\dddot{K}^n(T_1)\big\}_{n\in\bbN}$ is tight. 
\end{en-text}

\im[(iii)] 
There exist an $\bbR_+^\sfd\otimes\bbR_+^{\sfd_0}$-valued random field $K^\infty(t,s,\theta)$ 
%that is differentiable in $(s,\theta)\in [\hat{T}_0,t)\times\Theta$ and 
%and an $\bbR^\sfd\otimes\bbR^{\sfd_0}\otimes\bbR^\sfp$-valued random field $K_1^\infty(t,s)$ 
that is differentiable in $\theta\in\Theta$, and 
there exist $\ddot{K}^n\in\mfi$ for $n\in\bbN$ such that 
\beas 
\sum_{j=0}^{1\wedge\bar{j}}
\sup_{t'\in[{\blue T_0},t]}\sup_{s\in[\hat{T}_0,t')}\sup_{\theta\in\Theta} 
\l|\partial_{\theta}^jK^{n}(t',s,\theta)
-\partial_{\theta}^jK^{\infty}(t',s,\theta)
\r|
&\leq& 
\ddot{K}^n(t)\quad (t\in I,\>n\in\bbN)
\eeas
and that $\ddot{K}^n(T_1)\to^p0$ 
as $n\to\infty$. %,and that the mapping $[\hat{T}_0,t)\ni s\mapsto K^\infty_0(t,s)$ is continuous a.s. 
%for each $t\in I$. 
 
\ed
\ed

\begin{description}
\item[[A2\!\!]]$_{\bar{j}}$ 
%$g^n(t,\gamma)=[g^{n,0}(t,\gamma),...,g^{n,\sfd}(t,\gamma)]'$ 
For each $(\alpha,n)\in\cali\times\bbN$, 
%satisfies is an $\bbR_+^{1+\sfd}$-valued process satisfying 
$g^{n,\alpha}(t,\theta)$ is an 
nonnegative 
$\calf\times\bbB(I)\times\bbB(\Theta)$-measurable function 
%defined on $(\Omega,\calf,P^n)$, 
for which the following conditions are fulfilled. 
\bd
\im[(i)] For each $(n,\alpha,\theta)\in\bbN\times\cali\times\Theta$, 
the process $(g^{n,\alpha}(t,\theta))_{t\in I}$ is predictable.  

\begin{en-text}
Remove ! ============\\
the mapping 
\beas 
I\ni t\mapsto \inf_{\theta\in\Theta}g^{n,\alpha}(t,\theta)
\eeas
is $(0,\infty)$-valued and left-continuous a.s. for every $n\in\bbN$, and 
\beas 
\sum_{\alpha\in\cali}\sup_{t'\in[T_0,t]}
\sup_{\theta\in\Theta}g^{n,\alpha}(t',\theta)^{-1} &\leq& \odg^n(t)\quad(t\in I,\>n\in\bbN)
\eeas
for some $\odg^n\in\mfi$ for $n\in\bbN$ such that  
the family $\big\{\odg^n(T_1)\big\}_{n\in\bbN}$ 
is tight. \\
%for every $\alpha\in\cali$. 
============
\end{en-text}

\im[(ii)] For each $(n,t)\in\bbN\times I$, the mapping $\Theta\ni\theta\mapsto g^{n}(t,\theta)$ is 
$\bar{j}$ times differentiable a.s., 
and there exist $\dodg^n\in\mfi$ for $n\in\bbN$ such that 
\beas 
\sum_{j=0}^{\bar{j}} %\sup_{n\in\bbN}
\sup_{t'\in[T_0,t]}\sup_{\theta\in\Theta}\big|(\partial_\theta)^j
g^n(t',\theta)\big|
&\leq&
\dodg^n(t)\quad (t\in I,\>n\in\bbN)
\eeas
and that the family 
$\big\{\dodg^n(T_1)\big\}
_{n\in\bbN}$ 
is tight. 

\im[(iii)] 
There exist an $\bbR_+^\sfd$-valued random field $g^\infty(t,\theta)$ %and 
that is differentiable in $\theta\in\Theta$, and 
there exist $\ddodg^n\in\mfi$ for $n\in\bbN$ such that 
\beas 
\sum_{j=0}^{1\wedge\bar{j}}\sup_{t'\in[T_0,t]}\sup_{\theta\in\Theta}
\l| \partial_\theta^jg^{n}(t',\theta)
-\partial_\theta^jg^{\infty}(t',\theta)\r|
&\leq&
\ddodg^n(t)\quad(t\in I,\>n\in\bbN)
\eeas
and that $\ddodg^n(T_1)\to^p0$ 
as $n\to\infty$. 
\ed
\ed

%\bd
%\item[[B3\!\!]] 
%\im[(i)] $\Delta$ is $C^3$ in $\theta$. \koko
%\ed
%%%%%%%%%%%%%%%%%%%%%%%
%%%%%%%%%%%%%%%%%%%%%%%
\begin{remark*}\rm 
If one assumes separability for random fields on $(t,s,\theta)$ or $(t,\theta)$, 
it is possible to avoid introducing the envelope processes $\dot{K}^n$ etc. 
However, finding envelope processes is easier than verifying separability involving many intervals of variables. 
\end{remark*}
%
\begin{comment}
\begin{remark*}\rm 
Weal local boundedness for a process $Y$ 
means that the process $1_{\{\tau_m>0\}}Y(\cdot\wedge\tau_m)$ is bounded (uniformly in $t$ and a.s. $\omega$) for 
some sequence of stopping times $\tau_m\to\infty$ a.s. 
as $m\to\infty$, while $t\leq T_1$. \koko
\begin{en-text}
This is stronger than pathwise boundedness. 
The local boundedness assumptions are not strongly restrictive in practice. 
For example, even though $g^n(T_0,\theta)$ is unbounded, 
$g^n(\cdot,\theta)$ is possibly locally bounded under a certain conditioning 
that implies $g^n(\cdot,\theta)\leq C$ a.s. for a large constant $C$. 
If the probability of the event excluded out by this conditioning can be arbitrarily small, 
then one obtains the estimator's consistency and asymptotic mixed normality we will state below. 
\end{en-text}
\end{remark*}
\end{comment}
%{\colorg \ \\$\up\up\up\up\up\up\up\up\up\up\up\up\up\up\up\up\up\up\up\up\up\up\up\up\up\up\up\up\up\up\up\up\up\up\up\up\up\up\up\up$}

Let $\varrho:\calm_b(\hat{I})\times\calm_b(\hat{I})\to\bbR_+$ be a metric that is compatible with 
the weak$^*$-topology on the set of $\bbR$-valued measures with finite total variation. 
In other words, for $\mu_n,\mu\in\calm_b(\hat{I})$, 
the convergence 
$\varrho(\mu_n,\mu)\to0$ is equivalent to that $\mu_n(f)\to\mu(f)$ for all $f\in C(\hat{I})$. 
Nondecreasing functions will be identified with measures.

\bd
\item[[A3\!\!]]$_{{\colorrz \bar{j}}}$ 
For each $n\in\bbN$ and $\beta\in\cali_0$, $(X^{n,\beta}_t)_{t\in\hat{I}}$ is a non-decreasing right-continuous 
$(\calf_t)_{t\in\hat{I}}$-adapted process, and 
for each $\beta\in\cali_0$, there exists a 
%weakly locally bounded, 
non-decreasing process $(X^{\infty,\beta}_t)_{t\in\hat{I}}$ such that 
\begin{en-text}
\beas 
\sup_{t\in \hat{I}}\big|X^{n,\beta}_t%-X^{n,\beta}_{\hat{T}_0})
-X^{\infty,\beta}_t%-X^{\infty,\beta}_{\hat{T}_0})
\big|
&\to^p& 
0
\eeas
\end{en-text}
\beas 
\varrho(X^{n,\beta},X^{\infty,\beta}) &\to^p& 
0
\eeas
as $n\to\infty$ and that 
\beas \int_{[\hat{T}_0,t)}
%1_{\{\sum_{j=0}^{\colorr \bar{j}|\Delta_s\partial_\theta^j K^{\infty,\alpha}_\beta(t,s,\theta)|\not=0\}}
{\blue 1_{\cald_\alpha^\beta(t,\theta)}(s)}
dX^{\infty,\beta}_s &=& 0
\eeas
for all $(t,\theta,\alpha,\beta)\in I\times\Theta\times\cali\times\cali_0$ a.s.\footnote{That is, 
this equality holds for all $(t,\theta)\in I\times\Theta$ on some event $\Omega_0\in\calf$ with $P[\Omega_0]=1$. 
%$\Delta_sK(t,s,\theta)$ is the jump size at $s$, i.e., $K(t,s+,\theta)-K(t,s-,\theta)$ for 
%$s\in(\hat{T}_0,t)$ and $K(t,\hat{T}_0+,\theta)-K(t,\hat{T}_0,\theta)$ for $s=\hat{T}_0$, 
}
{\blue where $\cald_\alpha^\beta(t,\theta)=\{s\in[\hat{T}_0,t);\>\partial_\theta^jK(t,s,\theta)^{\infty,\alpha}_\beta$ is discontinuous at $s$ for some $j\leq\bar{j}\}$. }

%{\coloro [Subsequence argument will be applied. ]}
\ed
\halflineskip
%%%%%%%%%%%%%%%%%%%%%%%
%%%%%%%%%%%%%%%%%%%%%%%

We do not assume continuity of $K^\infty$ in $s$, 
{\blue which is necessary to treat a kernel looking back a finite-length of history. }

\begin{lemma*}\label{260808-3}
Let $j\in\{0,1\}$. Then 
under $[A1]_{j}$ $(ii)$ and $[A3]{\colorrz _{j}}$,% $(ii)$, 
\bea\label{260808-1}
\int_{I\times\Theta} \mu_1(dt)\mu_2(d\theta)\bigg|\int_{[\hat{T}_0,t)}
\partial_\theta^jK^\infty(t,s,\theta)dX^n_s
-
\int_{[\hat{T}_0,t)}\partial_\theta^jK^\infty(t,s,\theta)dX^\infty_s\bigg|
&\to^p&0
\eea
%for all $t\in I$ and $t_1,t_2\in\hat{I}$ ($t_1<t_2$) 
as $n\to\infty$ for any a.s.-bounded continuous random measure $\mu_1$ on $I$ and 
any a.s.-bounded random measure $\mu_2$ on $\Theta$. 
Additionally under $[A1]_{{\colorrz j}}$ $(iii)$, 
\bea\label{260808-2}
\int_{I\times\Theta} \mu_1(dt)\mu_2(d\theta)\bigg|\int_{[\hat{T}_0,t)}\partial_\theta^jK^n(t,s,\theta)dX^n_s
-
\int_{[\hat{T}_0,t)}\partial_\theta^jK^\infty(t,s,\theta)dX^\infty_s\bigg|
&\to^p&0\eea
as $n\to\infty$. 
%
%Furthermore, convergences $($\ref{260808-1}$)$ $[$resp. $($\ref{260808-2}$)$$]$ holds even $K^n$ and $K^\infty$ 
%are replaced by $\partial_\theta K^n$ and $\partial_\theta K^\infty$ respectively 
%under $[A1]_1$ $(ii)$ and $[A3]_1$ $(ii)$ $[$resp. $[A1]_1$ $(ii)$ and $[A3]_1$ $(ii)$, $(iii)$$]$. 
\end{lemma*}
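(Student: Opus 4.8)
The plan is to prove \eqref{260808-1} first and then derive \eqref{260808-2} from it by a triangle-inequality splitting. For \eqref{260808-1}, the key structural observation is that the inner integral $\int_{[\hat T_0,t)}\partial_\theta^jK^\infty(t,s,\theta)\,dX^n_s$ depends on $X^n$ only through its values on a compact interval, and as a functional of the measure $dX^n$ it is, modulo the discontinuity set of the integrand, weak$^*$-continuous. So the first step is: fix $\omega$ outside the null set on which $[A3]_j$ fails and on which the envelope bounds hold, and for each $(t,\theta)$ write $s\mapsto \partial_\theta^jK^\infty(t,s,\theta)$ as a bounded function on $[\hat T_0,t)$ whose set of discontinuities $\cald_\alpha^\beta(t,\theta)$ is $dX^{\infty,\beta}$-null by hypothesis. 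By the portmanteau/continuous-mapping theorem for weak$^*$-convergence of finite measures (using $\varrho(X^{n,\beta},X^{\infty,\beta})\to^p 0$ from $[A3]_j$, passed to a subsequence to get a.s.\ convergence), together with the fact that $dX^n$ puts no mass on $\{t\}$ as the interval is half-open, we get pointwise in $(t,\theta,\omega)$ that
\[
\Delta_n(t,\theta):=\bigg|\int_{[\hat T_0,t)}\partial_\theta^jK^\infty(t,s,\theta)\,dX^n_s-\int_{[\hat T_0,t)}\partial_\theta^jK^\infty(t,s,\theta)\,dX^\infty_s\bigg|\to 0.
\]
The second step is a dominated-convergence argument in $(t,\theta)$ against $\mu_1\otimes\mu_2$: by $[A1]_j(ii)$ the integrand is bounded by the envelope $\dot K^n(T_1)$ (and a matching bound for $K^\infty$ follows from tightness of $\{\dot K^n(T_1)\}$ by passing to the limit, or directly from $[A1]_j(iii)$), and $\varrho(X^{n},X^{\infty})\to^p0$ forces $\sup_n X^{n,\beta}_{T_1}$ to be tight, so $\Delta_n(t,\theta)$ is dominated by an a.s.-finite random constant times $\mu_1(I)\mu_2(\Theta)$; since $\mu_1,\mu_2$ are a.s.-bounded, dominated convergence gives $\int\mu_1(dt)\mu_2(d\theta)\Delta_n(t,\theta)\to 0$ a.s.\ along the subsequence. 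As the subsequence was arbitrary this yields the claimed convergence in probability.

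For \eqref{260808-2}, I would insert the intermediate term $\int_{[\hat T_0,t)}\partial_\theta^jK^\infty(t,s,\theta)\,dX^n_s$ and split into two pieces. The second piece is exactly \eqref{260808-1}, already done. The first piece is
\[
\int_{I\times\Theta}\mu_1(dt)\mu_2(d\theta)\bigg|\int_{[\hat T_0,t)}\big(\partial_\theta^jK^n-\partial_\theta^jK^\infty\big)(t,s,\theta)\,dX^n_s\bigg|
\le \ddot K^n(T_1)\cdot \big(\sup_\beta X^{n,\beta}_{T_1}\big)\cdot \mu_1(I)\,\mu_2(\Theta),
\]
using the uniform bound in $[A1]_j(iii)$ (with $j\le 1\wedge\bar j$, which matches the hypothesis $j\in\{0,1\}$ and the range of the sum there). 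Now $\ddot K^n(T_1)\to^p0$ by $[A1]_j(iii)$, $\{\sup_\beta X^{n,\beta}_{T_1}\}$ is tight by $[A3]_j$, and $\mu_1(I),\mu_2(\Theta)$ are a.s.\ finite, so the product is $o_p(1)$; combining the two pieces gives \eqref{260808-2}.

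The main obstacle is the first step: one must be careful that weak$^*$-convergence of $dX^n$ to $dX^\infty$ really transfers to convergence of the integrals against the \emph{discontinuous} integrand $\partial_\theta^jK^\infty(t,\cdot,\theta)$. This is where $[A3]_j$ is used in an essential way — the hypothesis $\int 1_{\cald_\alpha^\beta(t,\theta)}\,dX^{\infty,\beta}=0$ is exactly the portmanteau condition ``the limit measure gives zero mass to the discontinuity set of the test function.'' Two subtleties need attention here: (a) the half-open interval $[\hat T_0,t)$ versus the closed one — one should check that $X^{n,\beta}$ (right-continuous, so its jump at $t$ would contribute to $[\hat T_0,t]$ but not $[\hat T_0,t)$) does not create a spurious boundary term, which is handled by noting that for a fixed deterministic $t$ the limiting measure has no atom there generically, or by working with $[\hat T_0,t]$ and subtracting the atom explicitly; and (b) measurability/separability of the random field $(t,\theta)\mapsto\Delta_n(t,\theta)$ so that the $\mu_1\otimes\mu_2$-integral makes sense — this is covered by the separability conventions and the envelope-process formalism set up in $[A1]$ and the preceding remarks.
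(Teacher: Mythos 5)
Your proposal is correct and follows essentially the same route as the paper: pass to a subsequence to get a.s.\ convergence in $[A3]_j$, use the weak$^*$-convergence together with the hypothesis that the discontinuity set $\cald_\alpha^\beta(t,\theta)$ is $dX^{\infty,\beta}$-null to get pointwise convergence of the inner integrals, then dominated convergence against $\mu_1\otimes\mu_2$, and finally obtain \eqref{260808-2} from \eqref{260808-1} by inserting the cross term $\int\partial_\theta^jK^\infty\,dX^n$ and bounding the difference of kernels via $\ddot K^n(T_1)\to^p0$.

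The one place where your treatment is vaguer than the paper's is your subtlety (a), the half-open endpoint. Your first resolution, ``for a fixed deterministic $t$ the limiting measure has no atom there generically,'' is the right intuition but does not say what ``generically'' means or why it suffices; your second resolution, ``working with $[\hat T_0,t]$ and subtracting the atom explicitly,'' would not work cleanly, since $X^{n,\beta}(\{t\})$ need not converge to $X^{\infty,\beta}(\{t\})$ under weak$^*$-convergence. The paper's precise fix is worth recording: there is a set $D_\omega\subset\hat I$ whose complement is at most countable (namely the set of atoms of $X^{\infty,\beta}$) on which the restrictions $X^{n,\beta}|_{[\hat T_0,t)}$ converge weak$^*$ to $X^{\infty,\beta}|_{[\hat T_0,t)}$, and the continuity of $\mu_1$ is exactly what makes this countable exceptional set $\mu_1$-negligible, so the dominated-convergence step goes through. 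That is why the lemma statement requires $\mu_1$ to be a \emph{continuous} random measure; your proof does not explicitly invoke that hypothesis even though it is essential. With that point made precise, the argument is complete.
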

\proof Let $j=0$. 
To prove (\ref{260808-1}) with the subsequence argument, 
we may assume the convergence in [A3]$\colorrz_0$ holds a.s. and 
we will consider $\omega$ for which this convergence occurs 
as well as 
$\mu_1(\omega,\cdot)$ and $\mu_2(\omega,\cdot)$ are bounded. 
Moreover, we may assume the boundedness of $K^\infty$ thanks to [A1]$_0$ (ii). 
Then there is a subset $D_\omega$ of $\hat{I}$ such that 
$\hat{I}\setminus D_\omega$ is at most countable and that 
\beas\varrho\big(X^{n,\beta}|_{[\hat{T}_0,t)}, X^{\infty,\beta}|_{[\hat{T}_0,t)}\big)\to0\eeas 
for every $t\in D_\omega$. 
Due to the second condition of [A3]$_{\colorrz 0}$, 
\beas 
\int_{[\hat{T}_0,t)}K^\infty(t,s,\theta)dX^n_s &\to& 
\int_{[\hat{T}_0,t)}K^\infty(t,s,\theta)dX^\infty_s
\eeas
as $n\to\infty$ for every $(t,\theta)\in D_\omega\times\Theta$. 
Convergence of [A3]$_{\colorrz 0}$ also implies the boundedness of $\{X^{n,\beta}(\hat{I})\}_{n\in\bbN}$. 
Then the dominated convergence theorem gives 
\beas
\int_{I\times\Theta} \mu_1(dt)\mu_2(d\theta)
{\blue \bigg|}
\int_{[\hat{T}_0,t)}K^\infty(t,s,\theta)dX^n_s
{\blue -}
%&\to&
%\int_{I\times\Theta} \mu_1(dt)\mu_2(d\theta)
\int_{[\hat{T}_0,t)}K^\infty(t,s,\theta)dX^\infty_s
{\blue \bigg|}&\to&{\blue 0}
\eeas
for the $\omega$. 
This proves the convergence (\ref{260808-1}), and as a result (\ref{260808-2}). 
The convergences in the case $j=1$ are verified in the same way. 
\qed\halflineskip

{\colorr 
\begin{description}
\item[[A4\!\!]] %$_{\bar{j}}$ 
For each $(\omega,n,\alpha,t,\theta)\in\Omega\times\bbN\times\cali\times I\times\Theta$, 
$\lambda^{n,\alpha}(t,\theta)=0$ if and only if $\lambda^{n,\alpha}(t,\theta^*)=0$, and 
\beas 
\sum_{\alpha\in\cali}\sup_{t'\in[T_0,t]}
\sup_{\theta\in\Theta}\big\{\lambda^{n,\alpha}(t',\theta)^{-1} 1_{\{\lambda^{n,\alpha}(t',\theta)\not=0\}}\big\}
&\leq& \odg^n(t)\quad(t\in I,\>n\in\bbN)
\eeas
for some $\odg^n\in\mfi$ for $n\in\bbN$ such that  
the family $\big\{\odg^n(T_1)\big\}_{n\in\bbN}$ 
is tight. 
\end{description}
}
%\halflineskip
%
\begin{remark}\rm {\blue 
For modeling of $C^\alpha$ and $C^\beta$ 
of the limit order book in Section \ref{270723-1}, 
we may {\blue consider} $g^{n,\alpha}(t,\theta)$ proportional 
to $A^\alpha$ or $B^\beta$, or more complicated mechanism. 
Non degeneracy of the intensity processes 
for validating likelihood analysis 
seems to be problematic due to the shape of the quasi likelihood, 
but it causes no difficulty thanks to a positive minimum unit of orders 
in the limit order book. }
\end{remark}

Let 
\bea\label{260429-111}
\tilde{N}^{n,\alpha}_t=N^{n,\alpha}_t-N^{n,\alpha}_{T_0}-\int_{T_0}^tn\lambda^{n,\alpha}(s,\theta^*)ds
\quad (t\in I)
\eea
%%%%%%%%%%%%%%%%%
%%%%%%%%%%%%%%%%%
\begin{lemma*}\label{251123-1} 
Suppose that Conditions $[A1]_0$ $(i), (ii)$, $[A2]_0$ $(i), (ii)$, and $[A3]_{\colorrz 0}$ are fulfilled. 
\bd\im[(a)] 
The family $\big\{
%\sup_{t\in I}
\big|\lambda^n(t,\theta^*)\big|\big\}_{(n,t)\in\bbN\times I}$ is tight. 
\im[(b)] 
The family $\bigl\{n^{-1}N^{n,\alpha}([T_0,T_1])\bigr\}_{\alpha\in\cali,n\in\bbN}$ is tight. 
\im[(c)] 
The process $(\tilde{N}^{n,\alpha}_t)_{t\in I}$ 
is a locally square-integrable martingale with 
\beas 
\langle \tilde{N}^{n,\alpha}\rangle_t &=& n\int_{T_0}^t \lambda^{n,\alpha}(s,\theta^*)ds.
\eeas
\im[(d)] 
The family 
$\l\{\displaystyle 
\sup_{t\in I}\l|n^{-1/2}\tilde{N}^{n,\alpha}_t\r|\r\}_{n\in\bbN}
$ is tight. 
\ed
\end{lemma*}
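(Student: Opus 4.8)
The plan is to establish the four assertions in the order (a), (c), (d), (b), each feeding on the previous ones; throughout set $A^{n,\alpha}_t:=\int_{T_0}^{t}n\lambda^{n,\alpha}(s,\theta^*)\,ds$, which by hypothesis is the predictable compensator of $N^{n,\alpha}$ on $I$ (here $[A1]_0$ $(i)$ and $[A2]_0$ $(i)$ guarantee that $\lambda^n(\cdot,\theta^*)$ is predictable and that the Lebesgue--Stieltjes integral defining it is well posed).

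For (a) I would use a pathwise bound. Since $K^n$ is nonnegative and each $X^{n,\beta}$ is non-decreasing,
\[
0\le\int_{\hat{T}_0}^{t-}K^{n,\alpha}_\beta(t,s,\theta^*)\,dX^{n,\beta}_s
\le\Big(\sup_{s\in[\hat{T}_0,t)}K^{n,\alpha}_\beta(t,s,\theta^*)\Big)\big(X^{n,\beta}_{t-}-X^{n,\beta}_{\hat{T}_0}\big),
\]
so, using the envelopes of $[A1]_0$ $(ii)$ and $[A2]_0$ $(ii)$ together with their monotonicity, one gets uniformly in $t\in I$
\[
|\lambda^n(t,\theta^*)|\le C\Big(\dodg^n(T_1)+\dot{K}^n(T_1)\sum_{\beta\in\cali_0}\big(X^{n,\beta}_{T_1}-X^{n,\beta}_{\hat{T}_0}\big)\Big)
\]
for a constant $C=C(\sfd,\sfd_0)$. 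The families $\{\dodg^n(T_1)\}_n$ and $\{\dot{K}^n(T_1)\}_n$ are tight by assumption, and $\varrho(X^{n,\beta},X^{\infty,\beta})\to^p0$ from $[A3]_0$ forces tightness of $\{X^{n,\beta}(\hat{I})\}_n$: along any subsequence one passes to a further subsequence on which $X^{n,\beta}\to X^{\infty,\beta}$ weakly$^*$ a.s., whence the total masses converge and are a.s.\ bounded, and the usual contradiction argument upgrades this to tightness of the whole sequence. Since the right-hand side above is free of $t$, $\{|\lambda^n(t,\theta^*)|\}_{(n,t)\in\bbN\times I}$ is tight, and in particular $\sup_{t\in I}|\lambda^n(t,\theta^*)|<\infty$ a.s.

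For (c), by (a) the integrand of $A^{n,\alpha}$ is a.s.\ bounded, so $A^{n,\alpha}$ is a.s.\ finite and is an absolutely continuous, hence continuous, function of $t$; consequently $N^{n,\alpha}$ is non-explosive on $I$ and $\tilde{N}^{n,\alpha}=N^{n,\alpha}-N^{n,\alpha}_{T_0}-A^{n,\alpha}$ is a local martingale. Localizing by $\tau_m=\inf\{t\in I;\ N^{n,\alpha}_t\vee A^{n,\alpha}_t\ge m\}$ makes the stopped process bounded, so $\tilde{N}^{n,\alpha}$ is locally square-integrable. Since $A^{n,\alpha}$ is continuous and the jumps of $N^{n,\alpha}$ have size one, $[\tilde{N}^{n,\alpha}]_t=\sum_{T_0<s\le t}(\Delta N^{n,\alpha}_s)^2=N^{n,\alpha}_t-N^{n,\alpha}_{T_0}$, and passing to predictable compensators gives $\langle\tilde{N}^{n,\alpha}\rangle_t=A^{n,\alpha}_t=n\int_{T_0}^t\lambda^{n,\alpha}(s,\theta^*)\,ds$. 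For (d), apply the Lenglart--Rebolledo inequality to $n^{-1/2}\tilde{N}^{n,\alpha}$: its predictable quadratic variation at $T_1$ is $\int_{T_0}^{T_1}\lambda^{n,\alpha}(s,\theta^*)\,ds$, dominated by a constant times $\sup_{t\in I}|\lambda^n(t,\theta^*)|$ and hence tight by (a), so given $\eta>0$ one first picks $\delta$ with $\sup_{n,\alpha}P(\langle n^{-1/2}\tilde{N}^{n,\alpha}\rangle_{T_1}\ge\delta)<\eta/2$ and then $\epsilon$ with $\delta/\epsilon^2<\eta/2$, giving $\sup_{n,\alpha}P(\sup_{t\in I}|n^{-1/2}\tilde{N}^{n,\alpha}_t|\ge\epsilon)<\eta$; finiteness of $\cali$ then yields (d).

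Finally, for (b) I would decompose
\[
n^{-1}N^{n,\alpha}([T_0,T_1])=n^{-1/2}\big(n^{-1/2}\tilde{N}^{n,\alpha}_{T_1}\big)+\int_{T_0}^{T_1}\lambda^{n,\alpha}(s,\theta^*)\,ds+n^{-1}\Delta N^{n,\alpha}_{T_0};
\]
the first term tends to $0$ in probability because $\{n^{-1/2}\tilde{N}^{n,\alpha}_{T_1}\}_n$ is tight by (d), the middle term is tight by (a), and the last is at most $n^{-1}$, so the sum is tight. The genuinely delicate step is (c): confirming non-explosion of $N^{n,\alpha}$ on $I$, choosing the localization so that $\tilde{N}^{n,\alpha}$ is truly \emph{locally square-integrable} and not merely a local martingale, and using continuity of the compensator to identify first $[\tilde{N}^{n,\alpha}]$ and then $\langle\tilde{N}^{n,\alpha}\rangle$; the only other subtlety is deducing tightness of $\{X^{n,\beta}(\hat{I})\}_n$ from the weak$^*$-convergence-in-probability in $[A3]_0$. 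All the remaining estimates are direct.
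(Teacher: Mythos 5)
Your proof is correct and takes essentially the same approach as the paper: the bound $\lambda^{n,\alpha}(t,\theta^*)\le\dodg^n(T_1)+\dot{K}^n(T_1)(X^{n}_{T_1}-X^{n}_{\hat T_0})$ for (a) is exactly the paper's, after which the paper simply states the implication chain (a) $\Rightarrow$ (b) $\Rightarrow$ (c) $\Rightarrow$ (d) without detail. You prove the same facts in the order (a), (c), (d), (b) — which works equally well since (c) and (d) only need (a), and (b) then follows either from (a) via Lenglart applied directly to the L-dominated counting process (the paper's chain) or, as you do, by writing $n^{-1}N^{n,\alpha}([T_0,T_1])$ as $n^{-1}\tilde N^{n,\alpha}_{T_1}+\int_{T_0}^{T_1}\lambda^{n,\alpha}(s,\theta^*)\,ds+n^{-1}\Delta N^{n,\alpha}_{T_0}$ and using (d); the filled-in details (non-explosion, localization, $[\tilde N^{n,\alpha}]=N^{n,\alpha}-N^{n,\alpha}_{T_0}$, Lenglart, and tightness of $X^{n,\beta}(\hat I)$ from the $\varrho$-convergence in $[A3]_0$) are all sound.
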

%%%%%%%%%%%%%%%%%
%%%%%%%%%%%%%%%%%
\proof  By positivity of processes, 
\beas 
\lambda^{n,\alpha}(t,\theta) 
&\leq& 
g^{n,\alpha}(t,\theta)+ \sum_{\beta\in\cali_0}
\sup _{(t,s)\in J} K^{n,\alpha}_\beta(t,s,\theta)\big(X^{n,\beta}_{T_1}-X^{n,\beta}_{\hat{T}_0}\big)
%\\&=&
%g^n(t,\theta^*)+K^n(t,t-,\theta^*)X^n_{t-}-K^n(t,\hat{T}_0,\theta^*)X^n_{\hat{T}_0}
%-\int_{\hat{T}_0}^{t-} \partial_sK^n(t,s,\theta^*)X^n_sds
\\&\leq&
\dodg^n(T_1)+\dot{K}^n(T_1)\big(X^{n,\beta}_{T_1}-X^{n,\beta}_{\hat{T}_0}\big)
\eeas
for all $t\in I$ and $\theta\in\Theta$. 
\begin{en-text}
We note that the weak local boundedness assumptions 
in [A3]$_{\colorrz 0}$, [A1]$_0$ (ii), and [A2]$_0$ (ii) 
ensure that of 
$I\ni t\mapsto\sup_{\theta\in\Theta}\lambda^{n,\alpha}(t,\theta)$ for 
every $(n,\alpha)\in\bbN\times\cali$. 
Moreover the tightness assumptions imply that of 
$\big\{\sup_{t\in I}\sup_{\theta\in\Theta}\lambda^{n,\alpha}(t,\theta)\big\}_{n\in\bbN}$. 
\end{en-text}
Therefore (a) follows. 
In particular, 
$\int_{T_0}^{T_1}\lambda^{n,\alpha}(t,\theta^*)dt<\infty$ a.s., therefore (\ref{260429-111}) is well defined. 
%We have already obtained (a).  
Now it is easy to see (a) $\Iku$ (b) $\Iku$ (c) $\Iku$ (d). 
\qed
\halflineskip

For $\ell_n$ in (\ref{251010-1}), let 
\beas 
\bbY_n(\theta)
&=&
\frac{1}{n}[\ell_n(\theta)-\ell_n(\theta^*)]
\\&\equiv&
\sum_{\alpha=1}^\sfd\bigg(
\int_{T_0}^{T_1} \log\frac{\lambda^{n,\alpha}(t,\theta)}
{\lambda^{n,\alpha}(t,\theta^*)}
n^{-1}{\blue d}N^{n,\alpha}_t
%\\&&
-\int_{T_0}^{T_1}
\big[\lambda^{n,\alpha}(t,\theta)-\lambda^{n,\alpha}(t,\theta^*)\big]dt
\bigg).
\eeas
Here it should be noted that 
%$\inf_{(t,\theta)\in I\times\Theta}g^{n,\alpha}(t,\theta)>0$ a.s. by [A2]$_0$ (i), and 
the random fields $\ell_n$ and $\bbY_n$ are well defined 
{\colorr thanks to [A4]. }
\begin{en-text}
In Section \ref{260810-2}, we suppose that a bounded open set $\Theta$ in $\bbR^\sfp$ satisfies 
the condition that 
\bea\label{260810-1}
\inf_{\theta\in\Theta} \mbox{Leb}\big(\big\{\theta'\in\Theta; \> |\theta'-\theta|<\ep\big\}\big)
&\geq& a_0\big(\ep^\sfp\wedge1)
\sskip(\ep>0)
\eea
for some positive constant $a_0$, 
where 
$\mbox{Leb}$ is the Lebesgue measure. %on $\bbR^\sfp$. 
Moreover we suppose that 
the function $\Theta\ni\theta\mapsto\lambda^n(t,\theta)$ has continuous extension to $\bar{\Theta}$. 
\halflineskip
\end{en-text}
Let 
%\begin{screen}\vspace{-5mm}
\bea\label{251124-201}
{\lambda}^{\infty,\alpha}(t,\theta)
&=&
g^{\infty,\alpha}(t,\theta)
+
{\blue \sum_{\beta\in\cali_0}}
\int^{t-}_{\hat{T}_0}
K^{\infty,\alpha}_{\beta}(t,s,\theta)
dX^{\infty,\beta}_s
\eea
%\end{screen}
for $t\in I$ {\blue and $\theta\in\Theta$}. 
\begin{en-text}
\koko
By Sobolev's inequality, 
\beas 
\sup_{\theta\in\Theta} \big|\lambda^n(t,\theta)-\lambda^\infty(t,\theta)\big|
&\leq&
C_k(\Theta)
\bigg\{\sum_{j=0}^1 \int_\Theta  
\big|\partial_\theta^j\lambda^n(t,\theta)-\partial_\theta^j\lambda^\infty(t,\theta)\big|^{k}d\theta
\bigg\}^{1/k}
\\&\leq&
C_k(\Theta)\bigg[|\Theta|^{1/k}\ddodg^n(T_1)
\eeas
where $k>\sfp$ and 
$C_k(\Theta)$ is a constant only depending on $\Theta$ and $k$.

\begin{lemma*}\label{260604-1}
{\coloro Uniformity in $t$ is not necessary. }
Suppose that Conditions $[A1]_1$, $[A2]_1$ and $[A3]_{\colorrz 0}$ hold. Then 
\beas 
\sup_{t\in I,\theta\in\Theta} \big|\lambda^{n,\alpha}(t,\theta)-\lambda^{\infty,\alpha}(t,\theta)\big|
&\to^p& 
0
\eeas
as $n\to\infty$. 
\end{lemma*}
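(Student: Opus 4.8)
The plan is to decompose, for $\alpha\in\cali$,
$$
\lambda^{n,\alpha}(t,\theta)-\lambda^{\infty,\alpha}(t,\theta)
=\big(g^{n,\alpha}(t,\theta)-g^{\infty,\alpha}(t,\theta)\big)
+\sum_{\beta\in\cali_0}\big(A^n_{\alpha\beta}(t,\theta)+B^n_{\alpha\beta}(t,\theta)\big),
$$
with
$$
A^n_{\alpha\beta}(t,\theta)=\int_{[\hat{T}_0,t)}\big(K^{n,\alpha}_\beta-K^{\infty,\alpha}_\beta\big)(t,s,\theta)\,dX^{n,\beta}_s,
\qquad
B^n_{\alpha\beta}(t,\theta)=\int_{[\hat{T}_0,t)}K^{\infty,\alpha}_\beta(t,s,\theta)\,d\big(X^{n,\beta}_s-X^{\infty,\beta}_s\big),
$$
and to bound each piece uniformly in $(t,\theta)\in I\times\Theta$. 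The $g$-piece is immediate from $[A2]_1$ $(iii)$ with $j=0$: $\sup_{t\in I,\theta\in\Theta}|g^{n,\alpha}(t,\theta)-g^{\infty,\alpha}(t,\theta)|\leq\ddodg^n(T_1)\to^p0$. For $A^n_{\alpha\beta}$, $[A1]_1$ $(iii)$ with $j=0$ gives $\sup_{(t,s)\in J,\theta\in\Theta}|(K^{n,\alpha}_\beta-K^{\infty,\alpha}_\beta)(t,s,\theta)|\leq\ddot{K}^n(T_1)$, whence $\sup_{t,\theta}|A^n_{\alpha\beta}(t,\theta)|\leq\ddot{K}^n(T_1)\,X^{n,\beta}(\hat{I})\to^p0$, because $\ddot{K}^n(T_1)\to^p0$ while $\{X^{n,\beta}(\hat{I})\}_n$ is tight (the $\varrho$-convergence in $[A3]_0$ forces $X^{n,\beta}(\hat{I})\to^pX^{\infty,\beta}(\hat{I})$, as in the proof of Lemma \ref{260808-3}).

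The only substantial term is $B^n_{\alpha\beta}$. First I would strip the supremum over $\theta$: since $\Theta$ is bounded in $\bbR^\sfp$ and, by $[A1]_1$, $\theta\mapsto B^n_{\alpha\beta}(t,\theta)$ is $C^1$ with $\partial_\theta B^n_{\alpha\beta}(t,\theta)=\int_{[\hat{T}_0,t)}\partial_\theta K^{\infty,\alpha}_\beta(t,s,\theta)\,d(X^{n,\beta}_s-X^{\infty,\beta}_s)$, Sobolev's inequality gives, for any fixed $k>\sfp$,
$$
\sup_{\theta\in\Theta}|B^n_{\alpha\beta}(t,\theta)|^k
\leq C_k(\Theta)\sum_{j=0}^{1}\int_\Theta\big|\partial_\theta^j B^n_{\alpha\beta}(t,\theta)\big|^k\,d\theta,
$$
so it suffices to show the right-hand side $\to^p0$ uniformly in $t$; note the $\theta$-derivative produces $\partial_\theta K^\infty$, which is why $\bar{j}=1$ is used and why $\cald_\alpha^\beta(t,\theta)$ in $[A3]_0$ is defined with the range $j\leq\bar{j}$. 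Then, along a subsequence on which $\varrho(X^{n,\beta},X^{\infty,\beta})\to0$ a.s., I fix $\omega$ in the full-probability event on which additionally $\sup_nX^{n,\beta}(\hat{I})<\infty$, $K^\infty$ and $\partial_\theta K^\infty$ are bounded on $J\times\Theta$, and $X^{\infty,\beta}$ charges no set $\cald_\alpha^\beta(t,\theta)$. For each $t$ that is not an atom of $X^{\infty,\beta}$ the restriction of $X^{n,\beta}$ to $[\hat{T}_0,t)$ converges weak$^*$ to that of $X^{\infty,\beta}$, and since $\partial_\theta^j K^{\infty,\alpha}_\beta(t,\cdot,\theta)$ is bounded and $X^{\infty,\beta}$-a.e.\ continuous, the portmanteau theorem gives $\partial_\theta^j B^n_{\alpha\beta}(t,\theta)\to0$ for every $\theta$; dominated convergence in $\theta$ (domination by a constant times $X^{n,\beta}(\hat{I})^k$) then yields $\int_\Theta|\partial_\theta^j B^n_{\alpha\beta}(t,\theta)|^k\,d\theta\to0$ for all such $t$, i.e.\ for every $t$ outside the countable set of atoms of $X^{\infty,\beta}$.

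Upgrading this pointwise-in-$t$ convergence to the supremum over $t\in I$ is the point that needs real care, and is essentially the whole difficulty. If one is content with the integrated version $\int_I\sup_{\theta\in\Theta}|\lambda^{n,\alpha}(t,\theta)-\lambda^{\infty,\alpha}(t,\theta)|\,dt\to^p0$ — which, as the marginal remark signals, is all that the subsequent arguments actually invoke — it follows at once by combining the two easy bounds above with Lemma \ref{260808-3} (taking $\mu_1$ the Lebesgue measure on $I$ and $\mu_2$ the Lebesgue measure on $\Theta$) after the Sobolev reduction. For the literally uniform statement one must exclude oscillation of $t\mapsto B^n_{\alpha\beta}(t,\theta)$ near the atoms of $X^{\infty,\beta}$; this is harmless exactly when $X^{\infty,\beta}$ is continuous, in which case a P\'olya/Dini-type argument turns the monotone pointwise convergence $X^{n,\beta}_t\to X^{\infty,\beta}_t$ into uniform convergence and, together with the $s$-regularity of $[A1]_1$ $(i)$, propagates to $B^n_{\alpha\beta}$ uniformly in $(t,\theta)$. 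In the models of interest $X^{\infty,\beta}$ has the form $\int_{\hat{T}_0}^{\cdot}\lambda^{\infty,\beta}(u)\,du$ and is therefore continuous, so I would either invoke that continuity (or derive it) or, following the marginal note, state the lemma without the supremum over $t\in I$.
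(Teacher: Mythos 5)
Your decomposition is the one sketched in the paper's own (commented-out) remark that follows the proof, not the one used in the paper's written proof. The paper splits
\[
\lambda^n-\lambda^\infty
=(g^n-g^\infty)
+\int_{\hat T_0}^{t-}(K^n-K^\infty)\,dX^\infty
+\int_{\hat T_0}^{t-}K^n\,d(X^n-X^\infty),
\]
and then integrates the last term by parts, producing boundary terms of the form $K^n(t,t-,\theta)(X^n_{t-}-X^\infty_{t-})$ and an integral involving $\partial_s K^n$. That route is clean but it tacitly uses two hypotheses that are not in the conditions the lemma invokes: $s$-differentiability of $K^n$ (an earlier, now-deleted version of $[A1]$ contained such a clause) and uniform convergence $\sup_t|X^n_t-X^\infty_t|\to^p 0$ (an earlier, now-deleted version of $[A3]$ imposed exactly that; the current $[A3]_0$ only grants $\varrho$-convergence, i.e. weak$^*$ convergence). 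Under $[A1]_1$, $[A2]_1$, $[A3]_0$ as currently stated, the paper's written proof does not go through. Your alternative decomposition, with $B^n=\int K^\infty\,d(X^n-X^\infty)$, avoids any $s$-derivative of $K^n$ and is compatible with only weak$^*$ convergence, which is why the authors flag it as a possibility.

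You are also right that the genuine obstacle is the supremum over $t$. The argument you give establishes $\sup_\theta|B^n_{\alpha\beta}(t,\theta)|\to^p 0$ at every fixed $t$ outside the (countable) set of atoms of $X^{\infty,\beta}$, and upgrading that to $\sup_{t\in I}$ requires either (i) continuity of $X^{\infty,\beta}$ (P\'olya/Dini as you say), or (ii) the $s$-regularity of $K^n$ that the paper's written proof silently invokes. Neither is granted by $[A1]_1$, $[A3]_0$ alone. This is exactly why the surrounding text retreats to the integrated version in Lemma~\ref{260808-3}, which averages over $t$ with a continuous random measure $\mu_1$ and so needs only the pointwise-in-$t$ convergence you establish, and it matches the authors' own marginal note that uniformity in $t$ is not needed downstream. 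So your diagnosis is accurate; what you have is a correct proof of the weaker (integrated) statement that the rest of the paper actually uses, together with a correct account of the additional hypothesis that would be needed for the literal $\sup_t$ claim.

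One small bookkeeping point: deducing tightness of $\{X^{n,\beta}(\hat I)\}_n$ from $\varrho$-convergence is fine (test against $f\equiv 1$), but you should make sure the portmanteau step for $B^n_{\alpha\beta}(t,\theta)$ is applied to the \emph{restricted} measures $X^{n,\beta}\big|_{[\hat T_0,t)}$, and that the restriction map is weak$^*$-continuous precisely when $t$ is a continuity point of $X^{\infty,\beta}$ — that is where the atoms of $X^{\infty,\beta}$ enter, and it is the same point the paper's proof of Lemma~\ref{260808-3} handles by taking $t$ in a full-measure set $D_\omega$.
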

\proof 
We have 
\beas 
\lambda^n(t,\theta) -\lambda^\infty(t,\theta)
&=& 
g^n(t,\theta)-g^\infty(t,\theta)
+
\int_{\hat{T}_0}^{t-} K^n(t,s,\theta)dX^n_s - \int_{\hat{T}_0}^{t-} K^\infty(t,s,\theta)dX^\infty_s 
%g^n(t,\theta)+K^n(t,t-,\theta)X^n_{t-}-K^n(t,\hat{T}_0,\theta)X^n_{\hat{T}_0}
%-\int_{\hat{T}_0}^{t-} \partial_sK^n(t,s,\theta)X^n_sds
\\&=& 
g^n(t,\theta)-g^\infty(t,\theta)
+
\int_{\hat{T}_0}^{t-} (K^n(t,s,\theta)-K^\infty(t,s,\theta))dX^\infty_s 
\\&&
+ \int_{\hat{T}_0}^{t-} K^n(t,s,\theta)dX^n_s 
- \int_{\hat{T}_0}^{t-} K^n(t,s,\theta)dX^\infty_s 
\\&=&
g^n(t,\theta)-g^\infty(t,\theta)
+
\int_{\hat{T}_0}^{t-} (K^n(t,s,\theta)-K^\infty(t,s,\theta))dX^\infty_s 
\\&&
+K^n(t,t-,\theta)\big(X^n_{t-}-X^\infty_{t-}\big)
-K^n(t,\hat{T}_0,\theta)\big(X^n_{\hat{T}_0}-X^\infty_{\hat{T}_0}\big)
\\&&
-\int_{\hat{T}_0}^{t-} (\partial_sK^n)(t,s,\theta)\big(X^n_s- X^\infty_s\big)ds. \qed
\eeas
%\qed 
\halflineskip
\end{en-text}
%
%
\begin{comment}
COMMENT 
\begin{remark*}\rm Another possibility is to use  
\beas 
\lambda^n(t,\theta) -\lambda^\infty(t,\theta)
&=& 
g^n(t,\theta)-g^\infty(t,\theta)
+
\int_{\hat{T}_0}^{t-} (K^n(t,s,\theta)-K^\infty(t,s,\theta))dX^n_s 
\\&&
+ \bigg\{\int_{\hat{T}_0}^{t-} K^\infty(t,s,\theta)dX^n_s 
- \int_{\hat{T}_0}^{t-} K^\infty(t,s,\theta)dX^\infty_s \bigg\}
\eeas
and to use only continuity of $K^\infty(t,s,\theta)$ in $s$. 
\end{remark*}
\end{comment}

\begin{lemma*}\label{260803-10}
Suppose that $[A1]_1$, $[A2]_1$, $[A3]_{\colorrz 0}$ {\colorr and $[A4]$} are fulfilled. 
Then the family 
\beas 
\bigg\{\sup_{\theta\in \Theta} \l|\int_{T_0}^{T_1}
\log\frac{{\lambda}^{n,\alpha}(t,\theta)}
{{\lambda}^{n,\alpha}(t,\theta^*)}n^{-1/2}d\tilde{N}^{n,\alpha}_t
\r|\bigg\}_{n\in\bbN}
\eeas
is tight. 
\end{lemma*}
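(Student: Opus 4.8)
The plan is to treat, for fixed $\alpha\in\cali$, the random field
\[
\theta\ \longmapsto\ M^{n,\alpha}_{T_1}(\theta)\ :=\ \int_{T_0}^{T_1}\log\frac{\lambda^{n,\alpha}(t,\theta)}{\lambda^{n,\alpha}(t,\theta^*)}\,n^{-1/2}\,d\tilde N^{n,\alpha}_t
\]
as the terminal value of a family of stochastic integrals indexed by $\theta$, to obtain uniform-in-$n$ moment bounds for its supremum over $\theta$ by localizing with the envelope processes of $[A1]$--$[A4]$, by Burkholder--Davis--Gundy, and by a Sobolev embedding on $\Theta$, and then to deduce tightness in the standard way. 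Write $U^{n,\alpha}_t(\theta)=\log\{\lambda^{n,\alpha}(t,\theta)/\lambda^{n,\alpha}(t,\theta^*)\}$ with the convention $U^{n,\alpha}_t(\theta)=0$ on $\{\lambda^{n,\alpha}(t,\theta^*)=0\}$; by $[A4]$ this set coincides, for every $\theta$, with $\{\lambda^{n,\alpha}(t,\theta)=0\}$, and it does not depend on $\theta$. Hence for each $(\omega,t)$ either $U^{n,\alpha}_t(\cdot)\equiv0$, or $\lambda^{n,\alpha}(t,\cdot)\geq\odg^n(T_1)^{-1}>0$ on all of $\Theta$ and, by $[A1]_1(ii)$ and $[A2]_1(ii)$, $\theta\mapsto U^{n,\alpha}_t(\theta)$ is $C^1$ with $\partial_\theta U^{n,\alpha}_t(\theta)=\partial_\theta\lambda^{n,\alpha}(t,\theta)/\lambda^{n,\alpha}(t,\theta)$, where $\partial_\theta\lambda^{n,\alpha}(t,\theta)=\partial_\theta g^{n,\alpha}(t,\theta)+\int_{\hat{T}_0}^{t-}\partial_\theta K^{n,\alpha}(t,s,\theta)\,dX^n_s$. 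Combining these with the envelope bounds yields deterministic-in-$(t,\theta)$ estimates
\[
\sup_{t,\theta}|U^{n,\alpha}_t(\theta)|\vee\sup_{t,\theta}|\partial_\theta U^{n,\alpha}_t(\theta)|\vee\sup_t\lambda^{n,\alpha}(t,\theta^*)\ \leq\ \Psi\bigl(\dodg^n(T_1),\dot{K}^n(T_1),\odg^n(T_1),{\textstyle\sum_\beta}(X^{n,\beta}_{T_1}-X^{n,\beta}_{\hat{T}_0})\bigr)
\]
for a fixed continuous increasing $\Psi$; by $[A1]$, $[A2]$, $[A3]$, $[A4]$ and Lemma \ref{251123-1}, the right-hand side forms a tight family of random variables.

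Next I would localize. Since $\dot{K}^n,\dodg^n,\odg^n\in\mfi$ and $X^n$ is right-continuous nondecreasing adapted, for $M>0$ the time $\tau_n(M)$ defined as the first $t\geq T_0$ at which $\dot{K}^n(t)\vee\dodg^n(t)\vee\odg^n(t)\vee\sum_\beta(X^{n,\beta}_t-X^{n,\beta}_{\hat{T}_0})$ exceeds $M$, stopped at $T_1$, is a stopping time, and on $A_n(M):=\{\tau_n(M)=T_1\}$ all the quantities controlled by $\Psi$ are bounded by a deterministic $\ell(M)$; moreover $\inf_n P[A_n(M)]\to1$ as $M\to\infty$ by the stated tightness. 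For fixed $\theta$, $M^{n,\alpha}_t(\theta)=\int_{T_0}^t U^{n,\alpha}_s(\theta)n^{-1/2}d\tilde N^{n,\alpha}_s$ is a locally square-integrable martingale by Lemma \ref{251123-1}(c) and the predictability of $U^{n,\alpha}$, with, on the stopped interval,
\[
\langle M^{n,\alpha}(\theta)\rangle_{T_1\wedge\tau_n(M)}=\int_{T_0}^{T_1\wedge\tau_n(M)}U^{n,\alpha}_s(\theta)^2\,\lambda^{n,\alpha}(s,\theta^*)\,ds\ \leq\ \ell(M)^3(T_1-T_0),
\]
and likewise for $\partial_\theta M^{n,\alpha}_t(\theta)=\int_{T_0}^t\partial_\theta U^{n,\alpha}_s(\theta)n^{-1/2}d\tilde N^{n,\alpha}_s$, differentiation under the stochastic integral being legitimate thanks to the dominating envelopes in $[A1]_1$, $[A2]_1$ and separability of the random fields. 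Fixing $p>\sfp$, Burkholder--Davis--Gundy then gives $\sup_\theta E[|M^{n,\alpha}_{T_1\wedge\tau_n(M)}(\theta)|^p]\vee\sup_\theta E[|\partial_\theta M^{n,\alpha}_{T_1\wedge\tau_n(M)}(\theta)|^p]\leq C(M,p)$, uniformly in $n$.

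Finally I would pass to the supremum in $\theta$. Condition (\ref{260810-1}) is exactly the regularity of $\Theta$ needed for the Sobolev embedding $W^{1,p}(\Theta)\hookrightarrow C(\bar\Theta)$ with $p>\sfp$, so
\[
\sup_{\theta\in\Theta}|M^{n,\alpha}_{T_1\wedge\tau_n(M)}(\theta)|^p\ \leq\ C(\Theta,p)\int_\Theta\Bigl(|M^{n,\alpha}_{T_1\wedge\tau_n(M)}(\theta)|^p+|\partial_\theta M^{n,\alpha}_{T_1\wedge\tau_n(M)}(\theta)|^p\Bigr)\,d\theta .
\]
Taking expectations, applying Fubini (valid by separability) and the uniform moment bound above gives $E[\sup_\theta|M^{n,\alpha}_{T_1\wedge\tau_n(M)}(\theta)|^p]\leq C(\Theta,p)\,\mbox{Leb}(\Theta)\,C(M,p)=:K(M,p)<\infty$ uniformly in $n$. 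Since $M^{n,\alpha}_{T_1}(\theta)=M^{n,\alpha}_{T_1\wedge\tau_n(M)}(\theta)$ for all $\theta$ on $A_n(M)$, for every $R>0$ one has $P[\sup_\theta|M^{n,\alpha}_{T_1}(\theta)|>R]\leq P[A_n(M)^c]+R^{-p}K(M,p)$; given $\ep>0$, choosing $M$ with $\sup_nP[A_n(M)^c]<\ep/2$ and then $R$ with $R^{-p}K(M,p)<\ep/2$ yields the asserted tightness. The step I expect to be the genuine obstacle is the regularity and measurability bookkeeping in the middle paragraph: producing a modification of $\theta\mapsto M^{n,\alpha}_t(\theta)$ that is pathwise $C^1$ in $\theta$ with derivative equal to the stochastic integral of $\partial_\theta U^{n,\alpha}$, so that the Sobolev inequality genuinely applies, together with the measurability required for the Fubini step; the localization, the martingale moment inequalities, and the embedding itself are routine once the envelopes of $[A1]$--$[A4]$ are in hand.
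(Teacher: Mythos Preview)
Your proof follows the same architecture as the paper's: localize via stopping times built from the envelope processes in $[A1]$--$[A4]$, obtain $L^p$ moment bounds for $M^{n,\alpha}(\theta)$ and $\partial_\theta M^{n,\alpha}(\theta)$ on the stopped interval, apply the Sobolev embedding $W^{1,p}(\Theta)\hookrightarrow C(\bar\Theta)$ for $p>\sfp$, then remove the localization. The structure is correct and matches the paper.

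The one genuine gap is the Burkholder--Davis--Gundy step. You bound the \emph{predictable} quadratic variation,
\[
\langle M^{n,\alpha}(\theta)\rangle_{T_1\wedge\tau_n(M)}
=\int_{T_0}^{T_1\wedge\tau_n(M)}U^{n,\alpha}_s(\theta)^2\,\lambda^{n,\alpha}(s,\theta^*)\,ds
\leq \ell(M)^3(T_1-T_0),
\]
and then assert that BDG gives uniform $L^p$ bounds. For jump martingales BDG is stated in terms of the optional quadratic variation
\[
[M^{n,\alpha}(\theta)]_{T_1\wedge\tau_n(M)}=n^{-1}\int_{T_0}^{T_1\wedge\tau_n(M)}U^{n,\alpha}_s(\theta)^2\,dN^{n,\alpha}_s,
\]
which is \emph{not} deterministically bounded on the stopped interval: only tightness of $n^{-1}N^{n,\alpha}(I)$ is known a priori, not $L^{p/2}$-boundedness. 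The paper fills exactly this hole with an iterative scheme (its display (\ref{260809-1})): write $n^{-1}dN^{n,\alpha}=n^{-1/2}\cdot n^{-1/2}d\tilde N^{n,\alpha}+\lambda^{n,\alpha}(\cdot,\theta^*)\,dt$, so that after one BDG application the $2k$-th moment is controlled by a bounded term plus $n^{-k/2}$ times the $k$-th moment of another stochastic integral of the same type with bounded integrand; iterating from $k=2^m$ down to $k=1$, where the isometry $E[M^2]=E[\langle M\rangle]$ applies, yields (\ref{260809-2}). An alternative would be to invoke a Novikov-type inequality $E[(M^*)^p]\leq C_p\big(E[\langle M\rangle^{p/2}]+E[\sup_s|\Delta M_s|^p]\big)$ together with the jump bound $|\Delta M^{n,\alpha}|\leq n^{-1/2}\ell(M)$ on the stopped interval. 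Either way, some explicit use of the smallness of the jumps is required; a bound on $\langle M\rangle$ alone is not enough.

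Incidentally, the issue you single out as the ``genuine obstacle''---producing a $C^1$-in-$\theta$ modification so that Sobolev applies---is treated lightly in the paper (it just remarks that the stochastic integrals are continuous in $\theta$ under the assumptions); the iterative BDG argument is where the paper spends its effort.
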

\proof 
Before starting the proof, we note that 
the stochastic integrals of the statement are continuous in $\theta$ 
under the assumptions, and hence the supremum is measurable. 

Let 
\beas 
M^n_t(\theta) 
&=& 
\int_{T_0}^t
\xi^{n,\alpha}(s,\theta)n^{-1/2}d\tilde{N}^{n,\alpha}_s
\quad(t\in I)
\eeas
with 
\beas 
\xi^{n,\alpha}(s,\theta) &=& 
{\colorr 1_{\{{\lambda}^{n,\alpha}(s,\theta^*)\not=0\}}}
\log\frac{{\lambda}^{n,\alpha}(s,\theta)}
{{\lambda}^{n,\alpha}(s,\theta^*)}
\eeas
%
%\koko 
%Without loss of generality, we may assume the dominating processes $\dot{K}^n$ etc. are non-decreasing; 
%in fact we can replace $\dot{K}^n(t)$ by $\sup_{s\leq t}\dot{K}^n(s)$. 
%
Let $A>0$. 
Define stopping times $\tau^n_A$ depending on $A>0$ by 
\bea\label{260809-7} 
\tau^n_A &=& 
\inf\big\{t\geq T_0;\> \dot{K}^n(t)+\ddot{K}^n(t)%+\dddot{K}^n(t)
+\odg^n(t)+\dodg^n(t)+\ddodg^n(t)
{\blue +|X^n_{t-}-X^n_{T_0}|}
\> >\> A\big\}\wedge T_1.
\eea
Then for an event $\Omega_0\in\calf$ with $P(\Omega_0)=1$ and a non-decreasing function ${\frak f}:\bbR_+\to\bbR_+$, 
it holds that 
\beas 
\sup_{\omega\in\Omega_0}\sup_{n\in\bbN}
\sup_{t\in[\hat{T}_0,\tau^n_A]}\sup_{\theta\in\Theta}
\big(1_{\{\tau^n_A>0\}}\lambda^{n,\alpha}(t,\theta)+1_{\{\tau^n_A>0\}}\big|\xi^{n,\alpha}(t,\theta)\big|\big)
&\leq&
{\frak f}(A).
\eeas
Here the left-continuity of the dominating functions worked. 

By the Burkholder-Davis-Gundy inequality, for $k>1/2$, 
\bea\label{260809-1} &&
%E\Bigl[\Bigl((\partial_\theta)^jM^n_{t\wedge\tau_n}(\theta)\Big)^{2k} d\theta \Big]
%\\&=&
E\Biggl[\Biggl|
\int_{T_0}^{t\wedge\tau^n_A}
(\partial_\theta)^j\xi^{n,\alpha}(s,\theta)n^{-1/2}d\tilde{N}^{n,\alpha}_s
\Bigg|^{2k}\Bigg]
\nn\\&\leq&
C_k
E\Biggl[\Biggl|
n^{-1}\int_{T_0}^{t\wedge\tau^n_A}
\big((\partial_\theta)^j\xi^{n,\alpha}(s,\theta)\big)^2dN^{n,\alpha}_s
\Bigg|^{k}\Bigg]
\nn\\&\leq&
2^{k-1}C_k
n^{-k/2}
E\Biggl[\Biggl|
\int_{T_0}^{t\wedge\tau^n_A}
\big((\partial_\theta)^j\xi^{n,\alpha}(s,\theta)\big)^2n^{-1/2}d\tilde{N}^{n,\alpha}_s
\Bigg|^{k}\Bigg]
\nn\\&&
+2^{k-1}C_k
E\Biggl[\Biggl|
\int_{T_0}^{t\wedge\tau^n_A}
\big((\partial_\theta)^j\xi^{n,\alpha}(s,\theta)\big)^2\lambda^{n,\alpha}(s,\theta^*)ds
\Bigg|^{k}\Bigg].
\eea
Repeatedly using (\ref{260809-1}) starting with $k=2^{\blue m}$ for ${\blue m}\in\bbN$ and 
$E[(\int \cdots n^{-1/2}d\tilde{N}^{n,\alpha})^2]=E[\int \cdots^2\lambda^{n,\alpha}(s,\theta^*)ds]$ at last, we obtain 
\bea\label{260809-2} 
\sup_{(n,t,\theta)\in\bbN\times I\times\Theta}E\Biggl[\Biggl|
\int_{T_0}^{t\wedge\tau^n_A}
(\partial_\theta)^j\xi^{n,\alpha}(s,\theta)n^{-1/2}d\tilde{N}^{n,\alpha}_s
\Bigg|^{p}\Bigg]
&<&
\infty
\eea
for every $p>1$ and $j=0,1$. 

Applying Sobolev's inequality to ${\blue \Theta}$ and using (\ref{260809-2}), we have 
\bea\label{260809-3}
\sup_{(n,t)\in\bbN\times I}E\bigg[\sup_{\theta\in\Theta}\big|M^n_{t\wedge\tau_n}(\theta)\big|^p\bigg]
&\leq&
\sup_{(n,t)\in\bbN\times I}C_p(\Theta)
E\Bigl[\sum_{j=0}^1 \int_{\blue \Theta}\Bigl((\partial_\theta)^j
M^n_{t\wedge\tau_n}(\theta)
\Big)^p d\theta \Big]
\nn\\&=&
\sup_{(n,t)\in\bbN\times I}C_p(\Theta)
\sum_{j=0}^1 \int_{\blue \Theta} d\theta 
E\Bigl[\Bigl((\partial_\theta)^j
M^n_{t\wedge\tau_n}(\theta)
\Big)^p\Big]
\nn\\&<&
\infty
\eea
for $p>\sfp$. 

It follows from the tightness of $\{\dot{K}^n(T_1),...,\ddodg^n(T_1),
{\blue |X^n_{T_1}-X^n_{T_0}|}\}_{n\in\bbN}$ that 
for any $\ep>0$, there exists $A>0$ such that 
$\sup_{n\in\bbN} P[\tau^n_A<T_1]<\ep$. 
This with Inequality (\ref{260809-3}) proves the result.  
\qed\halflineskip

\begin{en-text}
\koko 
In view of Lemma \ref{251123-1} (a), (c) , Condition [A2]$_1$ (i) 
and [A2]$_1$ (ii), we see that 
for $\ep>0$, there exists a number $A>0$ such that 
\beas 
\sup_{n\in\bbN}P\bigg[n^{-1}\langle \tilde{N}^{n,\alpha}\rangle_{T_1}\geq A\bigg] &<& \frac{\ep}{4},
\eeas
\beas 
\sup_{n\in\bbN}P\bigg[\min_{\alpha\in\cali}\inf_{(t,\theta)\in I\times\Theta}g^{n,\alpha}(t,\theta)
\leq \frac{1}{A}\bigg] &<& \frac{\ep}{4}
\eeas

\beas 
\sup_{n\in\bbN}P\bigg[
\sum_{j=0}^{\bar{j}}\sup_{t\in I}\sup_{\theta\in\Theta}\big|(\partial_\theta)^jg^n(t,\theta)\big|
\geq A\bigg] &<& \frac{\ep}{4}
\eeas
{\coloro $\up$ To be corrected. The sets inside of $P$ is not necessarily measurable. }
\end{en-text}

Now 
\beas 
\bbY_n(\theta) 
&=& 
\sum_{\alpha=1}^\sfd\bigg(
\int_{T_0}^{T_1} \log\frac{{\lambda}^{n,\alpha}(t,\theta)}
{{\lambda}^{n,\alpha}(t,\theta^*)}
n^{-1}d\tilde{N}^{n,\alpha}_t\\
&&
-\int_{T_0}^{T_1}
\bigg[{\lambda}^{n,\alpha}(t,\theta)-{\lambda}^{n,\alpha}(t,\theta^*)
- \log\frac{{\lambda}^{n,\alpha}(t,\theta)}
{{\lambda}^{n,\alpha}(t,\theta^*)}
\lambda^{n,\alpha}(t,\theta^*)
\bigg]dt
\bigg).
\eeas
Let 
\bea\label{260810-11}
\bbY(\theta)
&=&
-\sum_{\alpha=1}^\sfd\int_{T_0}^{T_1}
\bigg[{\lambda}^{\infty,\alpha}(t,\theta)-{\lambda}^{\infty,\alpha}(t,\theta^*)
- \log\frac{{\lambda}^{\infty,\alpha}(t,\theta)}
{{\lambda}^{\infty,\alpha}(t,\theta^*)}
{\lambda}^{\infty,\alpha}(t,\theta^*)\bigg]dt
\eea

\begin{lemma*}\label{251125-1} 
{\blue Under $[A1]_1$, $[A2]_1$, $[A3]_0$ and $[A4]$, 
Then $\bbY$ has a continuous extension to $\bar{\Theta}$ and 
}
\beas
\sup_{\theta\in{\colorb \bar{\Theta}}}\big|\bbY_n(\theta)-\bbY(\theta)\big|
&\to^p& 0.
\eeas
\end{lemma*}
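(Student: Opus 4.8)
The plan is to split $\bbY_n(\theta)-\bbY(\theta)$ into a martingale part and a ``compensator'' part and to treat them by quite different means. Write
\[
\bbY_n(\theta)-\bbY(\theta)=\Xi_n(\theta)-\sum_{\alpha=1}^\sfd\big(G_n^\alpha(\theta)-G^\alpha(\theta)\big),\qquad \Xi_n(\theta)=\sum_{\alpha=1}^\sfd\int_{T_0}^{T_1}\log\frac{\lambda^{n,\alpha}(t,\theta)}{\lambda^{n,\alpha}(t,\theta^*)}\,n^{-1}d\tilde N^{n,\alpha}_t,
\]
where $G_n^\alpha(\theta)=\int_{T_0}^{T_1}F\big(\lambda^{n,\alpha}(t,\theta),\lambda^{n,\alpha}(t,\theta^*)\big)dt$ with the Bregman-type function $F(x,y)=x-y-y\log(x/y)$ on $(0,\infty)^2$, extended by $F(0,0)=0$, and $G^\alpha$ is the analogue of $G_n^\alpha$ with $\lambda^{\infty,\alpha}$ of (\ref{251124-201}) in place of $\lambda^{n,\alpha}$ (so $\bbY=-\sum_\alpha G^\alpha$, cf.\ (\ref{260810-11})). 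Here $[A4]$ makes this rewriting legitimate, since the zero sets of $\lambda^{n,\alpha}(\cdot,\theta)$ and $\lambda^{n,\alpha}(\cdot,\theta^*)$ coincide for every $\theta$, so where they vanish the integrand of $\bbY_n$ is $0$. For the martingale part, $\Xi_n(\theta)=n^{-1/2}\sum_\alpha\int_{T_0}^{T_1}\xi^{n,\alpha}(t,\theta)\,n^{-1/2}d\tilde N^{n,\alpha}_t$ with $\xi^{n,\alpha}$ as in the proof of Lemma~\ref{260803-10}, and that lemma gives the tightness of $\{\sup_\theta|\int\xi^{n,\alpha}(t,\theta)n^{-1/2}d\tilde N^{n,\alpha}_t|\}_n$; hence $\sup_{\theta\in\Theta}|\Xi_n(\theta)|=n^{-1/2}O_p(1)\to^p0$, the supremum being measurable because the integrals are continuous in $\theta$ as recorded there.

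Next I would localize the compensator part. Fix $\ep>0$ and use tightness of the envelope processes to choose $A$ with $\sup_nP[\tau^n_A<T_1]<\ep$, where $\tau^n_A$ is the stopping time (\ref{260809-7}). On $\{\tau^n_A=T_1\}$ the bounds $\dot{K}^n(T_1),\ddot{K}^n(T_1),\odg^n(T_1),\dodg^n(T_1),\ddodg^n(T_1)\le A$, together with $[A1]_1$, $[A2]_1$ and the (automatic) boundedness of $\{X^{n,\beta}_{T_1}-X^{n,\beta}_{\hat T_0}\}_n$ coming from $[A3]_0$, furnish a constant $c_A$ with $|\lambda^{n,\alpha}(t,\theta)|+|\partial_\theta\lambda^{n,\alpha}(t,\theta)|\le c_A$ for all $(t,\theta)$, while $[A4]$ gives $\lambda^{n,\alpha}(t,\theta)^{-1}\le A$ whenever $\lambda^{n,\alpha}(t,\theta)\ne0$. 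Thus on this event the nonzero values of $(\lambda^{n,\alpha}(t,\theta),\lambda^{n,\alpha}(t,\theta^*))$ stay in the compact rectangle $R_A=[A^{-1},c_A]^2$ on which $F$, $\partial_xF$ and $\partial_yF$ are Lipschitz; since where $\lambda^{n,\alpha}(\cdot,\theta^*)=0$ the integrand is identically $0$ in $\theta$, it follows that $\sup_{\theta\in\Theta}|\partial_\theta G_n^\alpha(\theta)|\le\kappa_A$ on $\{\tau^n_A=T_1\}$, so $\{G_n^\alpha\}_n$ is equi-Lipschitz off an event of probability $<\ep$. For a fixed $\theta$, Lemma~\ref{260808-3} with $j=0$, $\mu_1=\mathrm{Leb}|_{[T_0,T_1]}$ and $\mu_2=\delta_\theta$ (and $\mu_2=\delta_{\theta^*}$), together with $[A2]_1\,(iii)$ for the $g$-part, yields $\int_{T_0}^{T_1}|\lambda^{n,\alpha}(t,\theta)-\lambda^{\infty,\alpha}(t,\theta)|dt\to^p0$ and likewise at $\theta^*$; linearizing $F$ on $R_A$ on the event $\{\tau^n_A=T_1\}$ then gives $|G_n^\alpha(\theta)-G^\alpha(\theta)|\le c_A'\big(\int|\lambda^{n,\alpha}(t,\theta)-\lambda^{\infty,\alpha}(t,\theta)|dt+\int|\lambda^{n,\alpha}(t,\theta^*)-\lambda^{\infty,\alpha}(t,\theta^*)|dt\big)$ up to a term coming from the mismatch of the sets $\{\lambda^{n,\alpha}(\cdot,\theta^*)\ne0\}$ and $\{\lambda^{\infty,\alpha}(\cdot,\theta^*)\ne0\}$. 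That term I would dispose of along a subsequence: the a.e.-$t$ convergence of $\lambda^{n,\alpha}(t,\theta^*)$ forces these values, on $\{\tau^n_A=T_1\}$, to equal $0$ or to be $\ge A^{-1}$, so for a.e.\ $t$ both sides are eventually $0$ or both stay bounded below, whence the mismatch set is Lebesgue-null in the limit. This gives $G_n^\alpha(\theta)\to^pG^\alpha(\theta)$ for each $\theta$.

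Finally, pointwise convergence on a countable dense subset of $\Theta$ together with the equi-Lipschitz property (holding with probability $>1-\ep$ for arbitrary $\ep$) upgrades to $\sup_{\theta\in\Theta}|G_n^\alpha(\theta)-G^\alpha(\theta)|\to^p0$; the same equicontinuity shows that $G^\alpha$, hence $\bbY$, is uniformly continuous on $\Theta$ and so admits a continuous extension to $\bar\Theta$, and the convergence then also holds with the supremum taken over $\bar\Theta$. Summing over $\alpha$ and adding the estimate for $\Xi_n$ completes the proof.

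The main obstacle is the interplay between the degeneracy permitted by $[A4]$ (the intensities may vanish, and $F$ is singular as $x$ or $y\to0$) and the fact that $K^\infty$ is only ``continuous or with jump discontinuity'' in $s$, which rules out any uniform-in-$t$ convergence of $\lambda^{n,\alpha}$ to $\lambda^{\infty,\alpha}$: one has available only the weak $L^1(dt)$-type convergence of Lemma~\ref{260808-3}. Consequently all estimates must be carried through $L^1$-in-$t$ norms, and it is precisely the localization by $\tau^n_A$ that makes the Lipschitz constant of $F$, the derivative bound on $G_n^\alpha$, and the control of the domain mismatch uniform in $n$; getting the uniformity in $\theta$ without uniformity in $t$ (via equicontinuity rather than a direct uniform bound) is the other delicate point.
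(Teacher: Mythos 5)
Your proof is correct and follows essentially the same route as the paper: split $\bbY_n-\bbY$ into the martingale part (tight by Lemma~\ref{260803-10}, hence $n^{-1/2}O_p(1)$) and the compensator part, localize by the stopping times $\tau^n_A$ of (\ref{260809-7}), obtain pointwise convergence from Lemma~\ref{260808-3}, and upgrade to uniform convergence on $\bar\Theta$ via the $\theta$-derivative bound (equicontinuity / tightness of $\{P^{\bar{\bbY}_n}\}$ in $C(\bar\Theta)$), which also yields the continuous extension of $\bbY$. The extra care you take with the zero-set mismatch under $[A4]$ is a point the paper absorbs implicitly into its constant $C_A$, but it does not change the structure of the argument.
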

\proof We shall use the stopping times $\tau^n_A$ given in (\ref{260809-7}) for $A>0$. 
Let 
\beas
\bar{\bbY}_n(\theta)
&=&
-\sum_{\alpha=1}^\sfd\int_{T_0}^{T_1}
\bigg[{\lambda}^{n,\alpha}(t,\theta)-{\lambda}^{n,\alpha}(t,\theta^*)
- \log\frac{{\lambda}^{n,\alpha}(t,\theta)}
{{\lambda}^{n,\alpha}(t,\theta^*)}
\lambda^{n,\alpha}(t,\theta^*)
\bigg]dt.
\eeas
Then, on the event $\{\tau^n_A=T_1\}$, 
\beas &&
|\bar{\bbY}_n(\theta) - \bbY(\theta) |
\\&\leq& 
C_A\bigg(1+|X^n_{T_1}-X^n_{\hat{T}_0}|+|X^\infty_{T_1}-X^\infty_{\hat{T}_0}|\bigg)
\\&&
\times
\sum_{\alpha=1}^\sfd\bigg\{
\int_{T_0}^{T_1}\big| \lambda^{n,\alpha}(t,\theta)- \lambda^{\infty,\alpha}(t,\theta)\big|dt
+\int_{T_0}^{T_1}\big| \lambda^{n,\alpha}(t,\theta^*)- \lambda^{\infty,\alpha}(t,\theta^*)\big|dt\bigg\},
\eeas
where $C_A$ is a constant depending on $A$. 
Therefore, due to Lemma \ref{260808-3}, we have 
\bea\label{260809-10}
\bar{\bbY}_n(\theta) &\to^p& \bbY(\theta)
\eea
as $n\to\infty$ for each $\theta\in\Theta$. 
Since 
\beas 
\sup_{\theta\in\Theta}\big|\partial_\theta\bar{\bbY}_n(\theta) \big|
&\leq& 
\sfd |I| \big(1+\odg^n(T_1)\big)\big\{1+\dodg^n(T_1)+\dot{K}^n(T_1)(X^n_{T_1}-X^n_{\hat{T}_0}\big)\big\}^2
\eeas
and the family of random variables on the right-hand is tight, 
the family $\{P^{\bar{\bbY}_n}\}_{n\in\bbN}$ is tight as the family of distributions on $C(\bar{\Theta})$ 
due to the existence of continuous extension of $\bar{\bbY}_n$ to $\bar{\Theta}$. 
In particular, 
{\colorb $\bbY$ is well defined as a continuous function on $\bar{\Theta}$ and}
(\ref{260809-10}) holds for all $\theta\in\bar{\Theta}$. 
Thus we can conclude according to the standard argument, 
\beas 
\sup_{\theta\in\bar{\Theta}}\big|\bar{\bbY}_n(\theta)-\bbY(\theta)\big|
&\to^p&
0
\eeas
as $n\to\infty$, which gives the result if combined with Lemma \ref{260803-10}. 
\qed\halflineskip

We assume 
\bd
\item[[{\colorr A5}\!\!]] 
For every $\ep>0$, 
$\displaystyle
\inf_{\theta\in\Theta: \atop
|(\theta)-(\theta^*)|>\ep}
\bbY(\theta)
<
0\sskip a.s.
$
\ed

\begin{en-text}
{A sequence of estimators $\hat{\theta}_n$ for $\theta$ is referred to as 
an approximate maximum likelihood estimator if 
it satisfies 
\beas 
\frac{1}{n}\ell_n(\hat{\theta}_n,\hat{\gamma}_n,\bbN) &\geq& 
\frac{1}{n}\ell_n(\theta^*,\gamma^*,\bbN)-o_p(1)
\eeas
as $n\to\infty$. 
\end{en-text}

The following theorem gives consistency of the approximate maximum likelihood estimator. 
\begin{theorem*}\label{270615-1}
Suppose that $[A1]_1$, $[A2]_1$, $[A3]_{\colorrz 0}$, {\colorr $[A4]$ and $[A5]$} are satisfied. Then 
any estimator $\hat{\theta}_n$ for $\theta$ 
{\blue satisfying} $n^{-1}\ell_n(\hat{\theta}_n)\geq n^{-1}\ell_n(\theta^*)-o_p(1)$ as $n\to\infty$ 
is consistent, that is, 
$
\hat{\theta}_n \to^p \theta^*
$ 
as $n\to\infty$. 
\end{theorem*}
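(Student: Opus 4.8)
The plan is to run the classical argmax-type consistency argument for $M$-estimators, the two required inputs being already in place from the preceding development: the uniform convergence $\sup_{\theta\in\bar\Theta}|\bbY_n(\theta)-\bbY(\theta)|\to^p0$ and the continuity of the limit field $\bbY$ on the compact set $\bar\Theta$, both furnished by Lemma \ref{251125-1}, together with the identifiability condition [A5]. Since $\bbY_n(\theta)=n^{-1}\bigl(\ell_n(\theta)-\ell_n(\theta^*)\bigr)$, the defining property of $\hat\theta_n$ reads $\bbY_n(\hat\theta_n)\geq-R_n$, where $R_n:=\bigl(n^{-1}\ell_n(\theta^*)-n^{-1}\ell_n(\hat\theta_n)\bigr)_+=o_p(1)$ by hypothesis.

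Before the main step I would record the structural reason the contrast is well chosen, namely that $\bbY\leq0$ with maximum at $\theta^*$: in (\ref{260810-11}) the $\alpha$-th integrand equals $\lambda^{\infty,\alpha}(t,\theta^*)\,\phi\bigl(\lambda^{\infty,\alpha}(t,\theta)/\lambda^{\infty,\alpha}(t,\theta^*)\bigr)$ with $\phi(x)=x-1-\log x\geq0$ on $\{\lambda^{\infty,\alpha}(t,\theta^*)>0\}$, while on its complement (with the convention $0\cdot\log(\cdot/0)=0$) it reduces to $\lambda^{\infty,\alpha}(t,\theta)\geq0$; hence $\bbY(\theta)\leq\bbY(\theta^*)=0$ for all $\theta$. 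Formally, however, I only use the separation supplied by [A5]: for each $\ep>0$ the random variable
\[
\chi_\ep\;:=\;\sup\bigl\{\bbY(\theta)\,:\,\theta\in\bar\Theta,\ |\theta-\theta^*|\geq\ep\bigr\}
\]
is a.s.\ strictly negative, the supremum being attained because $\bar\Theta$ is compact and $\bbY$ is continuous on it by Lemma \ref{251125-1}.

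\emph{The main step.} Fix $\ep,\delta>0$. Since $\bigcap_{\eta>0}\{\chi_\ep\geq-\eta\}=\{\chi_\ep\geq0\}$ is a null event and the sets decrease as $\eta\downarrow0$, choose $\eta>0$ with $P[\chi_\ep\geq-\eta]<\delta/2$. On the event $\{|\hat\theta_n-\theta^*|\geq\ep\}\cap\{\chi_\ep<-\eta\}$, using $\hat\theta_n\in\Theta\subset\bar\Theta$,
\[
-R_n\;\leq\;\bbY_n(\hat\theta_n)\;=\;\bbY(\hat\theta_n)+\bigl(\bbY_n(\hat\theta_n)-\bbY(\hat\theta_n)\bigr)\;\leq\;\chi_\ep+\sup_{\theta\in\bar\Theta}|\bbY_n(\theta)-\bbY(\theta)|\;<\;-\eta+\sup_{\theta\in\bar\Theta}|\bbY_n(\theta)-\bbY(\theta)|,
\]
so on this event $\eta\leq R_n+\sup_{\theta\in\bar\Theta}|\bbY_n(\theta)-\bbY(\theta)|$. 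Consequently
\[
P\bigl[|\hat\theta_n-\theta^*|\geq\ep\bigr]\;\leq\;P[\chi_\ep\geq-\eta]+P\Bigl[\eta\leq R_n+\sup_{\theta\in\bar\Theta}|\bbY_n(\theta)-\bbY(\theta)|\Bigr]\;<\;\frac{\delta}{2}+P\Bigl[\eta\leq R_n+\sup_{\theta\in\bar\Theta}|\bbY_n(\theta)-\bbY(\theta)|\Bigr],
\]
and by $R_n=o_p(1)$ and Lemma \ref{251125-1} the last probability is $<\delta/2$ for all large $n$. Hence $\limsup_n P[|\hat\theta_n-\theta^*|\geq\ep]\leq\delta$; letting $\delta\downarrow0$ and then $\ep\downarrow0$ yields $\hat\theta_n\to^p\theta^*$.

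I do not expect a genuine obstacle here: Lemmas \ref{260808-3}--\ref{251125-1} and [A4] have already absorbed the technical content (existence of the stochastic integrals, uniform convergence of $\bbY_n$, continuity of $\bbY$ up to the boundary of $\Theta$). The two points needing a little care are the randomness of the limit contrast --- that $\chi_\ep$ is a bona fide a.s.-negative random variable and that $P[\chi_\ep\geq-\eta]\downarrow0$ as $\eta\downarrow0$, which is precisely what makes the argument survive in this non-ergodic situation --- and the measurability of $\hat\theta_n$, of $\sup_{\bar\Theta}|\bbY_n-\bbY|$, and of $\chi_\ep$, all of which follow from the $\theta$-continuity of $\bbY_n$ and $\bbY$ on $\bar\Theta$ together with the separability convention already adopted.
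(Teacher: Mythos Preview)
Your argument is correct and is precisely the standard argmax consistency argument the paper has in mind: the theorem is stated without an explicit proof, being an immediate consequence of the uniform convergence and continuity in Lemma~\ref{251125-1} together with the identifiability condition [A5], and your write-up fills in exactly these steps, with the appropriate extra care that the random limit $\bbY$ requires (choosing $\eta$ so that $P[\chi_\ep\geq-\eta]$ is small). Note that [A5] as printed uses ``$\inf$'' where ``$\sup$'' is clearly intended (consistent with the role of $\chi_0$ in [B5]); your reading $\chi_\ep<0$ a.s.\ is the correct one.
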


\begin{remark*}\rm 
Theorems \ref{270615-1} and  \ref{260716-4} are regarded as generalizations of 
Theorems 1 and 2 of Chen and Hall \cite{chen2013inference}, respectively. 
The consistency result given here is asserted for any sequence of 
quasi maximum likelihood estimator. 
The limit theorem gives asymptotic mixed normality in a general regression scheme 
in non-ergodic statistics. 
Though the treatments are simpler in the classical  methods, 
they are not sufficient to develop advanced themes 
such as prediction, information criteria and higher-order asymptotic theory. 
In particular, convergence of the moments of the quasi likelihood estimators or equivalently 
sharp estimates of tail probability of them is indispensable. 
The quasi likelihood analysis with the polynomial type large deviation inequalities 
for statistical random fields 
will be established in Section \ref{qla-pp}, the main part of this article. 
This construction of inferential theory enables us to approach the above mentioned problems. 
Indeed, the reader can find in 
\cite{UchidaYoshida2013} and \cite{UchidaYoshida2015} such a flow from  
non-ergodic statistical inference for volatility to an information criterion for 
volatility model selection. 
When the input intensities of the Hawkes type processes are time-varying, 
the question of non-degeneracy of the statistical model becomes complicated than expected. 
We will give a sufficient condition for non-degeneracy. 
Asymptotic properties of the quasi Bayesian estimator will be elucidated as well. 
\end{remark*}
\halflineskip

\subsection{Asymptotic mixed normality of the QMLE: a classical approach}\label{260730-1}

We shall investigate the asymptotic distribution of the QMLE. 
In Section \ref{260730-1}, the parameter space $\Theta$ is assumed only to be open 
without Condition (\ref{260810-1}) because only local properties are discussed. 
\begin{en-text}
For it, we will apply localization by 
\beas 
\sigma_\ep &=& \inf\big\{t;\>\inf_{n\in\bbN,\theta\in\Theta}g^{n,\alpha}(t,\theta)<\ep\big\}\wedge T_1
\eeas
for $\ep>0$, and we may assume hereafter that 
\beas 
\inf_{\omega\in\Omega, n\in\bbN,t\in I,\theta\in\Theta}\lambda^n(t,\theta) &>& 0,
\eeas
thanks to [A2]$_0$ (i). 
\end{en-text}

Under regularity conditions stated later, we have
\bea\label{260716-3} 
\partial_\theta\ell_n(\theta^*) &=& 
\sum_\alpha \int_{T_0}^{T_1} \lambda^{n,\alpha}(t,\theta^*)^{-1}\partial_\theta\lambda^{n,\alpha}(t,\theta^*)
d\tilde{N}^{n,\alpha}_t
\eea
and
\bea\label{260716-1}
n^{-1}\partial_\theta^2\ell_n(\theta) &=& 
\sum_\alpha \int_{T_0}^{T_1} 
\partial_\theta\big(\partial_\theta\lambda^{n,\alpha}/\lambda^{n,\alpha}\big)(t,\theta)n^{-1}d\tilde{N}^{n,\alpha}_t
\nn\\&&
-\sum_\alpha \int_{T_0}^{T_1} 
(\partial_\theta\lambda^{n,\alpha})^{\otimes2}(t,\theta)\big(\lambda^{n,\alpha}(t,\theta)\big)^{-2}
\lambda^{n,\alpha}(t,\theta^*)dt
\nn\\&&
{\blue -}\sum_\alpha \int_{T_0}^{T_1} 
\partial_\theta^2\lambda^{n,\alpha}(t,\theta)\big(\lambda^{n,\alpha}(t,\theta)\big)^{-1}
\big(\lambda^{n,\alpha}(t,\theta)-
\lambda^{n,\alpha}(t,\theta^*)\big)dt.
\eea
%{\coloro [We will use the boundedness of $\partial_\theta\lambda^n$ to prove uniform convergence. ]}

We obtain the following lemma 
in the same way as the proof of Lemma \ref{260803-10}, replacing 
$\xi^{n,\alpha}(s,\theta)$ by 
$\partial_\theta\big(\partial_\theta\lambda^{n,\alpha}/\lambda^{n,\alpha}\big)(s,\theta)$ in this case. 

\begin{lemma*}\label{260617-1}
Suppose that $[A1]_3$, $[A2]_3$, $[A3]_{\colorrz 1}$ {\colorr and $[A4]$} are fulfilled. 
Then for any ball $V$ centered at $\theta^*$ in $\Theta$, %neighborhood $V$ of $\theta^*$, 
the family 
\beas 
\bigg\{
\sup_{\theta\in V} \l|\int_{T_0}^{T_1}
\partial_\theta\big(\partial_\theta\lambda^{n,\alpha}/\lambda^{n,\alpha}\big)(t,\theta)n^{-1/2}d\tilde{N}^{n,\alpha}_t\r|
\bigg\}_{n\in\bbN}
\eeas
is tight. 
\end{lemma*}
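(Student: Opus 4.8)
The plan is to copy the proof of Lemma~\ref{260803-10} line by line, with its integrand $\xi^{n,\alpha}(s,\theta)$ replaced by
\[
\zeta^{n,\alpha}(s,\theta)\;=\;1_{\{\lambda^{n,\alpha}(s,\theta^*)\not=0\}}\,\partial_\theta\bigl(\partial_\theta\lambda^{n,\alpha}/\lambda^{n,\alpha}\bigr)(s,\theta),
\]
the indicator being harmless because $\lambda^{n,\alpha}(s,\theta^*)=0$ forces $\lambda^{n,\alpha}(s,\theta)=0$ by $[A4]$, and the compensator $\int n\lambda^{n,\alpha}(\cdot,\theta^*)\,ds$ of $N^{n,\alpha}$ charges no such time. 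First I would record that under $[A1]_3$ and $[A2]_3$ the map $\theta\mapsto\lambda^{n,\alpha}(t,\theta)$ is three times differentiable, the $\theta$-derivatives being produced by differentiating under the $dX^n$-integral (legitimate thanks to the envelope bounds in $[A1]_3$(ii) and $[A2]_3$(ii)); hence $\zeta^{n,\alpha}(s,\cdot)$ and $\partial_\theta\zeta^{n,\alpha}(s,\cdot)$ exist, and, exactly as in the proof of Lemma~\ref{260803-10}, the stochastic integral $\theta\mapsto\int_{T_0}^{T_1}\zeta^{n,\alpha}(t,\theta)\,n^{-1/2}d\tilde{N}^{n,\alpha}_t$ is continuous in $\theta$, so the supremum in the statement is measurable. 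Note also that the hypotheses $[A1]_0$, $[A2]_0$, $[A3]_0$ of Lemma~\ref{251123-1} are weaker than $[A1]_3$, $[A2]_3$, $[A3]_1$, so Lemma~\ref{251123-1} is available.

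The only genuinely new computation is the deterministic bound on the stopped interval. Keeping the stopping times $\tau^n_A$ of (\ref{260809-7}) (now with $\dot{K}^n$, $\dodg^n$ denoting the order-$3$ envelopes of $[A1]_3$, $[A2]_3$), I would expand by the quotient rule
\[
\partial_\theta\bigl(\partial_\theta\lambda^{n,\alpha}/\lambda^{n,\alpha}\bigr)
=\frac{\partial_\theta^2\lambda^{n,\alpha}}{\lambda^{n,\alpha}}-\frac{(\partial_\theta\lambda^{n,\alpha})^{\otimes2}}{(\lambda^{n,\alpha})^{2}},
\]
and differentiate once more; the resulting expression involves $\partial_\theta^j\lambda^{n,\alpha}$ only for $j\le3$, together with negative powers of $\lambda^{n,\alpha}$. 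On the event $\{\tau^n_A=T_1\}$ every dominating process entering (\ref{260809-7}) is $\le A$, so the computation behind Lemma~\ref{251123-1}, applied also to the $\theta$-derivatives, bounds $|\partial_\theta^j\lambda^{n,\alpha}(t,\theta)|$ for $j\le3$ by a non-decreasing function of $A$, while $[A4]$ gives $\lambda^{n,\alpha}(t,\theta)^{-1}1_{\{\lambda^{n,\alpha}(t,\theta)\not=0\}}\le\odg^n(t)\le A$; hence, after enlarging the non-decreasing function ${\frak f}$ of Lemma~\ref{260803-10},
\[
\sup_{\omega\in\Omega_0}\sup_{n\in\bbN}\sup_{t\in[\hat{T}_0,\tau^n_A]}\sup_{\theta\in\Theta}1_{\{\tau^n_A>0\}}\bigl(|\zeta^{n,\alpha}(t,\theta)|+|\partial_\theta\zeta^{n,\alpha}(t,\theta)|\bigr)\le{\frak f}(A)
\]
on a full-probability event $\Omega_0$, the left-continuity of the dominating processes again being what makes the supremum over $[\hat{T}_0,\tau^n_A]$ licit.

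Granting this bound, the remainder is a transcription of the proof of Lemma~\ref{260803-10}. With $M^n_t(\theta)=\int_{T_0}^t\zeta^{n,\alpha}(s,\theta)\,n^{-1/2}d\tilde{N}^{n,\alpha}_s$, I would iterate the Burkholder--Davis--Gundy inequality as in (\ref{260809-1})--(\ref{260809-2}), splitting $dN^{n,\alpha}=d\tilde{N}^{n,\alpha}+n\lambda^{n,\alpha}(\cdot,\theta^*)\,dt$ at each stage, starting from $k=2^m$ with $m\in\bbN$ and using $E[(\int\cdot\,n^{-1/2}d\tilde{N}^{n,\alpha})^2]=E[\int\cdot^2\lambda^{n,\alpha}(\cdot,\theta^*)\,dt]$ at the last stage, to obtain $\sup_{n,t,\theta}E[|(\partial_\theta)^jM^n_{t\wedge\tau^n_A}(\theta)|^p]<\infty$ for all $p>1$ and $j=0,1$; Sobolev's inequality on the ball $V$ then yields $\sup_{n,t}E[\sup_{\theta\in V}|M^n_{t\wedge\tau^n_A}(\theta)|^p]<\infty$ for $p>\sfp$. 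Finally the tightness of $\{\dot{K}^n(T_1),\ddot{K}^n(T_1),\odg^n(T_1),\dodg^n(T_1),\ddodg^n(T_1),|X^n_{T_1}-X^n_{T_0}|\}_{n\in\bbN}$ gives, for each $\ep>0$, some $A$ with $\sup_nP[\tau^n_A<T_1]<\ep$, and combining this with the uniform $L^p$-bound on the stopped process proves the claimed tightness. The step I expect to demand the most care is the $j=1$ part of the deterministic bound — checking that $\partial_\theta\zeta^{n,\alpha}$ never calls on more than three $\theta$-derivatives of $\lambda^{n,\alpha}$ and is dominated by precisely the envelopes supplied by $[A1]_3$ and $[A2]_3$ — everything else being a verbatim repetition of the earlier argument.
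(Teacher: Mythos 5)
Your proposal is correct and reproduces exactly what the paper does: the authors state that Lemma~\ref{260617-1} is obtained ``in the same way as the proof of Lemma~\ref{260803-10}, replacing $\xi^{n,\alpha}(s,\theta)$ by $\partial_\theta(\partial_\theta\lambda^{n,\alpha}/\lambda^{n,\alpha})(s,\theta)$,'' which is precisely your plan (stopping times $\tau^n_A$ from (\ref{260809-7}), iterated BDG as in (\ref{260809-1})--(\ref{260809-2}), Sobolev on $V$, then the tightness of the quantities defining $\tau^n_A$). Your added indicator $1_{\{\lambda^{n,\alpha}(s,\theta^*)\neq 0\}}$ and the check that $\partial_\theta\zeta^{n,\alpha}$ uses at most three $\theta$-derivatives of $\lambda^{n,\alpha}$, dominated by the $[A1]_3$, $[A2]_3$, $[A4]$ envelopes, are exactly the points the paper leaves implicit.
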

\begin{en-text}
\proof 
Localize martingales and apply the BDG inequality and Sobolev's inequality to $V$. 
That is, estimate 
\beas 
\sum_{j=0}^1 \int_V d\theta 
E\Bigl[\Bigl((\partial_\theta)^j\int_{T_0}^{\tau_\ep}
\partial_\theta\big(\partial_\theta\lambda^{n,\alpha}/\lambda^{n,\alpha}\big)(t,\theta)n^{-1}d\tilde{N}^{n,\alpha}_t
\Big)^{2k}\Big]
\eeas
for $k\in\bbN$ and $\ep\in(0,1)$ for which all ovally bounded objects are bounded in $[\ep,\ep^{-1}]$ 
under localization by a stopping time $\tau_\ep$. 
\end{en-text}
%\qed
\halflineskip

Let 
\bea\label{270614-1}
\Gamma 
&=& 
\sum_{\alpha\in\cali} \int_{T_0}^{T_1} 
(\partial_\theta\lambda^{\infty,\alpha})^{\otimes2}(\lambda^{\infty,\alpha})^{-1}(t,\theta^*)dt.
\eea

\begin{lemma*}\label{260618-1} 
{\blue Suppose that $[A1]_2$, $[A2]_2$, $[A3]_1$ and $[A4]$ are fulfilled. } 
For any sequence $(V_n)_{n\in\bbN}$ of neighborhoods of $\theta^*$ that is shrinking to $\{\theta^*\}$, 
\beas 
\sup_{\theta\in V_n}\l|n^{-1}\partial_\theta^2\ell_n(\theta) +\Gamma\r| 
&\to^p& 0
\eeas
as $n\to\infty$. 
\end{lemma*}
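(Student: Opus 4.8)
The plan is to localize and then dispose of the three summands on the right-hand side of (\ref{260716-1}) one at a time. First I fix $A>0$ and work on the event $\{\tau^n_A=T_1\}$, where $\tau^n_A$ is the stopping time (\ref{260809-7}); on that event all the dominating processes occurring in (\ref{260809-7}) are $\leq A$, so by $[A1]_2$, $[A2]_2$ and $[A4]$ there is a nondecreasing ${\frak f}:\bbR_+\to\bbR_+$ for which $\lambda^{n,\alpha}(t,\theta)$, $\lambda^{n,\alpha}(t,\theta)^{-1}$, $|\partial_\theta\lambda^{n,\alpha}(t,\theta)|$ and $|\partial_\theta^2\lambda^{n,\alpha}(t,\theta)|$ are all $\leq{\frak f}(A)$, uniformly in $\omega$, $n$, $\alpha$, $t\leq\tau^n_A$ and $\theta\in\Theta$. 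Since the envelopes are tight, $\sup_nP[\tau^n_A<T_1]\to0$ as $A\to\infty$, so it suffices to establish the asserted convergence on $\{\tau^n_A=T_1\}$ for arbitrary fixed $A$. I shall also use, from Lemma \ref{251123-1}, that $\langle\tilde N^{n,\alpha}\rangle_t=n\int_{T_0}^t\lambda^{n,\alpha}(s,\theta^*)\,ds$ and that $\{n^{-1}N^{n,\alpha}([T_0,T_1])\}_n$ is tight, and, from Lemma \ref{260808-3} applied with $\mu_1$ the Lebesgue measure on $I$, $\mu_2=\delta_{\theta^*}$ and $j\in\{0,1\}$, together with $[A1]_2$ $(iii)$ and $[A2]_2$ $(iii)$, that $\int_{T_0}^{T_1}|\partial_\theta^j\lambda^{n,\alpha}(t,\theta^*)-\partial_\theta^j\lambda^{\infty,\alpha}(t,\theta^*)|\,dt\to^p0$ for $j=0,1$.

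I write the three summands of (\ref{260716-1}) as $T^{(1)}_n(\theta)$, $-T^{(2)}_n(\theta)$ and $T^{(3)}_n(\theta)$, where $T^{(2)}_n(\theta)=\sum_\alpha\int_{T_0}^{T_1}(\partial_\theta\lambda^{n,\alpha})^{\otimes2}(t,\theta)(\lambda^{n,\alpha}(t,\theta))^{-2}\lambda^{n,\alpha}(t,\theta^*)\,dt$ and $T^{(3)}_n(\theta)=-\sum_\alpha\int_{T_0}^{T_1}\partial_\theta^2\lambda^{n,\alpha}(t,\theta)(\lambda^{n,\alpha}(t,\theta))^{-1}\big(\lambda^{n,\alpha}(t,\theta)-\lambda^{n,\alpha}(t,\theta^*)\big)\,dt$. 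Since $T^{(1)}_n(\theta)=n^{-1/2}\sum_\alpha\int_{T_0}^{T_1}\partial_\theta\big(\partial_\theta\lambda^{n,\alpha}/\lambda^{n,\alpha}\big)(t,\theta)\,n^{-1/2}d\tilde N^{n,\alpha}_t$, Lemma \ref{260617-1} (applied on any fixed ball $V$ about $\theta^*$, so that $V_n\subset V$ for $n$ large) gives $\sup_{\theta\in V_n}|T^{(1)}_n(\theta)|\leq n^{-1/2}\sum_\alpha\sup_{\theta\in V}\big|\int_{T_0}^{T_1}\partial_\theta\big(\partial_\theta\lambda^{n,\alpha}/\lambda^{n,\alpha}\big)(t,\theta)\,n^{-1/2}d\tilde N^{n,\alpha}_t\big|=n^{-1/2}O_p(1)\to^p0$. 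For $T^{(3)}_n$ I use that $T^{(3)}_n(\theta^*)=0$ and that, on $\{\tau^n_A=T_1\}$, $|\lambda^{n,\alpha}(t,\theta)-\lambda^{n,\alpha}(t,\theta^*)|\leq{\frak f}(A)\sup_{\theta'\in V_n}|\theta'-\theta^*|$ for $\theta\in V_n$, whence $\sup_{\theta\in V_n}|T^{(3)}_n(\theta)|\leq C_A\sup_{\theta'\in V_n}|\theta'-\theta^*|\to0$ as $n\to\infty$.

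There remains $T^{(2)}_n$. I split $T^{(2)}_n(\theta)-\Gamma=[T^{(2)}_n(\theta)-T^{(2)}_n(\theta^*)]+[T^{(2)}_n(\theta^*)-\Gamma]$. Since $T^{(2)}_n(\theta^*)=\sum_\alpha\int_{T_0}^{T_1}(\partial_\theta\lambda^{n,\alpha}(t,\theta^*))^{\otimes2}(\lambda^{n,\alpha}(t,\theta^*))^{-1}\,dt$, the second bracket equals $\sum_\alpha\int_{T_0}^{T_1}\big\{(\partial_\theta\lambda^{n,\alpha})^{\otimes2}(\lambda^{n,\alpha})^{-1}-(\partial_\theta\lambda^{\infty,\alpha})^{\otimes2}(\lambda^{\infty,\alpha})^{-1}\big\}(t,\theta^*)\,dt$, which tends to $0$ in probability: on $\{\tau^n_A=T_1\}$ the factors $\lambda^{n,\alpha}(\cdot,\theta^*)^{-1}$ and $|\partial_\theta\lambda^{n,\alpha}(\cdot,\theta^*)|$ — hence, passing to the limit, $\lambda^{\infty,\alpha}(\cdot,\theta^*)^{-1}$ and $|\partial_\theta\lambda^{\infty,\alpha}(\cdot,\theta^*)|$ — are bounded, the map $(u,v)\mapsto u^{\otimes2}/v$ is Lipschitz on the corresponding bounded region, and one invokes the $L^1(dt)$-convergence recalled in the first paragraph. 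The first bracket is, on $\{\tau^n_A=T_1\}$, at most $C_A\sup_{\theta'\in V_n}|\theta'-\theta^*|$, because there the integrand defining $T^{(2)}_n$ is Lipschitz in $\theta$ with a constant depending only on $A$ (its $\theta$-derivative involves $\partial_\theta^2\lambda^{n,\alpha}$, $\partial_\theta\lambda^{n,\alpha}$ and powers of $(\lambda^{n,\alpha})^{-1}$, all controlled by ${\frak f}(A)$ through the second-order $\theta$-bounds of $[A1]_2$ and $[A2]_2$). Assembling the estimates for $T^{(1)}_n$, $T^{(2)}_n$, $T^{(3)}_n$ and letting $A\to\infty$ yields the lemma. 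The one genuinely delicate step — uniform control of the $\theta$-supremum of the stochastic integral $T^{(1)}_n$ over the shrinking neighborhoods — has already been isolated as Lemma \ref{260617-1} (proved by Burkholder--Davis--Gundy and Sobolev's inequality); once that is granted, the rest of the argument is the localization together with the elementary Lipschitz-in-$\theta$ estimates above and Lemma \ref{260808-3}.
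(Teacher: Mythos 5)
Your proof is correct and follows essentially the same route as the paper's: localize with the stopping times $\tau^n_A$ from (\ref{260809-7}), dispose of the martingale term via Lemma \ref{260617-1} (with the extra $n^{-1/2}$ factor), and control the remaining two integrals by combining Lemma \ref{260808-3} (convergence of $\partial_\theta^j\lambda^n$ to $\partial_\theta^j\lambda^\infty$ in $L^1(dt)$) with boundedness on the localization event and a Lipschitz-in-$\theta$ estimate over the shrinking $V_n$. The only cosmetic difference is that you split $T^{(2)}_n(\theta)-\Gamma$ through the intermediate quantity $T^{(2)}_n(\theta^*)$, whereas the paper bundles the non-martingale terms into a single product-of-tight-factors-times-integrals bound (their display (\ref{260716-1})) and then sends $\theta\to\theta^*$ at the end via (\ref{260810-9}); the two organizations are logically equivalent.
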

\proof 
Since $g^\infty(t,\theta)$ and $K^\infty(t,s,\theta)$ are continuous in $\theta$, and 
bounded a.s., so is $\lambda^\infty(t,\theta)$, that is defined by (\ref{251124-201}). 
We use the stopping times $\tau^n_A$ given in (\ref{260809-7}) for $A>0$. 
On the event $\{\tau^n_A=T_1\}$, 
\bea\label{260716-1}&&
\big|n^{-1}\partial_\theta^2\ell_n(\theta) -\Gamma\big|
\nn\\&\leq& 
\sum_\alpha \bigg|\int_{T_0}^{T_1} 
\partial_\theta\big(\partial_\theta\lambda^{n,\alpha}/\lambda^{n,\alpha}\big)(t,\theta)n^{-1}d\tilde{N}^{n,\alpha}_t\bigg|
\nn\\&&
+\sum_\alpha \bigg|\int_{T_0}^{T_1} 
(\partial_\theta\lambda^{n,\alpha})^{\otimes2}(t,\theta)\big(\lambda^{n,\alpha}(t,\theta)\big)^{-2}
\lambda^{n,\alpha}(t,\theta^*)dt
\nn\\&&
\qquad-\int_{T_0}^{T_1} 
(\partial_\theta\lambda^{\infty,\alpha})^{\otimes2}(t,\theta^*)\big(\lambda^{\infty,\alpha}(t,\theta^*)\big)^{-2}
\lambda^{\infty,\alpha}(t,\theta^*)dt\bigg|
\nn\\&&
+\sum_\alpha \bigg|\int_{T_0}^{T_1} 
\partial_\theta^2\lambda^{n,\alpha}(t,\theta)\big(\lambda^{n,\alpha}(t,\theta)\big)^{-1}
\big(\lambda^{n,\alpha}(t,\theta)-
\lambda^{n,\alpha}(t,\theta^*)\big)dt\bigg|
\nn\\&\leq& 
\sum_\alpha \bigg|\int_{T_0}^{T_1} 
\partial_\theta\big(\partial_\theta\lambda^{n,\alpha}/\lambda^{n,\alpha}\big)(t,\theta)n^{-1}d\tilde{N}^{n,\alpha}_t\bigg|
\nn\\&&
\qquad+C\big(1+\odg^n(T_1)\big)^3
\big\{{\blue 1+}\dodg^n(T_1)+\dot{K}^n(T_1)|X^n(T_1)-X^n(\hat{T}_0)|\big\}^{\blue 3}
\nn\\&&
\times\bigg\{\sum_{j=0}^1\int_{T_0}^{T_1} \big|\partial_\theta^j\lambda^{n,\alpha}(t,\theta)
-\partial_\theta^j\lambda^{\infty,\alpha}(t,\theta)\big|dt
+\int_{T_0}^{T_1} \big|\lambda^{n,\alpha}(t,\theta^*)-\lambda^{\infty,\alpha}(t,\theta^*)\big|dt
\nn\\&&
\qquad+{\blue \sum_{j=0}^1}
\int_{T_0}^{T_1} \big|{\blue\partial_\theta^j}\lambda^{\infty,\alpha}(t,\theta)-{\blue\partial_\theta^j}\lambda^{\infty,\alpha}(t,\theta^*)\big|dt\bigg\}
\nn\\&&
\eea
{\blue Under $[A1]_1$, $[A2]_1$ and $[A_3]_1$, 
Lemma \ref{260808-3} gives 
\beas 
\int_{T_0}^{T_1}\big|
\partial_\theta^j\lambda^{n,\alpha}(t,\theta)-\partial_\theta^j\lambda^{\infty,\alpha}(t,\theta)
\big|dt
&\to^p& 0
\eeas
as $n\to\infty$ for each $\theta\in\Theta$ for $j=0,1$. 
Then with the the tightness of 
$\{\sup_{t,\theta}|\partial_\theta^j\lambda^n(t,\theta)|\}_{n\in\bbN}$ for $j=1,2$, 
deduced from $[A1]_2$ and $[A2]_2$,}
we obtain the uniform convergence 
\bea\label{260810-10}
\sup_{\theta\in {\colorc V_n}}
\sum_{j=0}^1\int_{T_0}^{T_1} \big|\partial_\theta^j\lambda^{n,\alpha}(t,\theta)
-\partial_\theta^j\lambda^{\infty,\alpha}(t,\theta)\big|dt
&\to^p& 0
\eea
Moreover, 
\bea\label{260810-9}
\lim_{\theta\to\theta^*}
\int_{T_0}^{T_1}\big|{\blue \partial_\theta^j}
\lambda^\infty(t,\theta)-{\blue \partial_\theta^j}\lambda^\infty(t,\theta^*)\big|dt
&=&0\quad a.s.\quad{\blue (j=0,1)}
\eea
by the regularity of $\lambda^\infty$. 
{\blue Indeed, this property follows from (\ref{260810-10}) and 
tightness of 
$\{\sup_{t,\theta}{\colorc|}\partial_\theta^j\lambda^n(t,\theta){\colorc|}\}_{n\in\bbN}$ for 
$j=1,2$. }
Then {\colorb Lemma \ref{260617-1},} (\ref{260716-1}), (\ref{260810-10}) and (\ref{260810-9}) yield the lemma 
with an argument with localization. 
\qed\halflineskip

{\blue 
We shall recall a mixed normal limit theorem. 
{\colorc Given} 
a stochastic basis $(\Omega,\calf,{\bf F},P)$ with ${\bf F}=(\calf_t)_{t\in[T_0,T_1]}$,
we suppose that $\mu^{n,\alpha}$ ($\alpha=1,...,{\sf d})$ are integer-valued 
random measures on ${\sf E}_\alpha=\bbR^{{\sf d}_\alpha}\setminus\{0\}$ with 
compensators $\nu^{n,\alpha}$ respectively. 
Let $c^n_\alpha:\Omega\times \bbR^{{\sf d}_\alpha}\to\bbR^{\sf d}$ be predictable 
processes. 
Let 
\bea\label{271106-1} 
L^n_t &=& \sum_\alpha\int_{T_0}^t\int_{{\sf E}_\alpha} 
c^n_\alpha(t,x)\tilde{\mu}^{n,\alpha}(dt,dx),
\eea
where 
$\tilde{\mu}^{n,\alpha}=\mu^{n,\alpha}-\nu^{n,\alpha}$. 
%[and $(b_n)_{n\in\bbN}$ is a sequence of positive numbers tending to $\infty$ as $n\to\infty$. \koko] 
%
\begin{lemma*}\label{270730-1}
Suppose that the following conditions are fulfilled. 
\bi 
\im[{\rm (i)}] For each $n\in\bbN$, any two of $\mu^{n,\alpha}$ do not have 
common jump times and $\nu^{n,\alpha}(\{s\},{\sf E}_\alpha)=0$ for all $s\in[T_0,T_1]$. 
\im[{\rm (ii)}] There exists an $\bbR^{\sfd}\otimes\bbR^{\sf r}$-valued ${\bf F}$-predictable process $g=(g_t)$ 
such that $\int_{T_0}^{T_1}|g_s|^2ds<\infty$ a.s. and that 
%\langle L^n \rangle_t 
\beas 
\sum_\alpha\int_{T_0}^t\int_{{\sf E}_\alpha}  c^n_\alpha(s,x)^{\otimes2}\nu^{n,\alpha}(ds,dx)
&\to^p& 
\int_{T_0}^t g_s^{\otimes2} ds
\eeas
as $n\to\infty$ for every $t\in[T_0,T_1]$. 

\im[{\rm (iii)}] For every $\ep>0$,  
\beas 
\int_{T_0}^{T_1}\int_{\{|c^n_\alpha(s,x)|>\ep\}}
 |c^n_\alpha(s,x)|^2\nu^{n,\alpha}(ds,dx) &\to^p& 0
\eeas
as $n\to\infty$
\ei
Then $L^n\to^{d_s}\int_{T_0}^\cdot g_s dW_s$ in $\bbD([T_0,T_1];\bbR^\sfd)$, 
where $W$ is an ${\sf r}$-dimensional standard Wiener process 
$($defined on an extension of $(\Omega,\calf,P)$$)$ 
independent of $g$. 
\end{lemma*}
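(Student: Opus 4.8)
\noindent
The plan is to view $L^n$ as a sequence of locally square-integrable purely discontinuous $\F$-martingales whose quadratic characteristic becomes continuous in the limit, and then to apply a functional martingale stable limit theorem of the ``continuous Gaussian limit'' type (as in Jacod and Shiryaev, \emph{Limit Theorems for Stochastic Processes}, or Jacod (1997)), whose three hypotheses I will check in turn.

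First I would record the structural facts. The integrand $c^n_\alpha$ is $\F$-predictable and, by (ii), the process $t\mapsto\sum_\alpha\int_{T_0}^t\int_{{\sf E}_\alpha}c^n_\alpha(s,x)^{\otimes2}\nu^{n,\alpha}(ds,dx)$ is a.s.\ finite (this is implicit in the convergence asserted there), so each $\int_{T_0}^\cdot\int_{{\sf E}_\alpha}c^n_\alpha(s,x)\tilde{\mu}^{n,\alpha}(ds,dx)$ is a purely discontinuous locally square-integrable martingale. By (i) the jump supports of the $\mu^{n,\alpha}$ are pairwise disjoint, so these martingales are mutually orthogonal, and the time-atomlessness $\nu^{n,\alpha}(\{s\},{\sf E}_\alpha)=0$ makes the corresponding ``atom correction'' in the angle bracket vanish. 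Hence $L^n$ is a purely discontinuous locally square-integrable martingale with $\Delta L^n_s=c^n_\alpha(s,x)$ whenever $\mu^{n,\alpha}$ charges a point $(s,x)$, and
\beas
[L^n,L^n]_t &=& \sum_\alpha\int_{T_0}^t\int_{{\sf E}_\alpha}c^n_\alpha(s,x)^{\otimes2}\mu^{n,\alpha}(ds,dx),\\
\langle L^n,L^n\rangle_t &=& \sum_\alpha\int_{T_0}^t\int_{{\sf E}_\alpha}c^n_\alpha(s,x)^{\otimes2}\nu^{n,\alpha}(ds,dx);
\eeas
in particular $\langle L^n,L^n\rangle$ is continuous.

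Next I would verify the three inputs of the stable CLT. Condition (ii) is precisely $\langle L^n,L^n\rangle_t\to^p C_t:=\int_{T_0}^tg_s^{\otimes2}ds$ for every $t$; since each $\langle L^n,L^n\rangle$ is non-decreasing and the limit $C$ is continuous, this strengthens to locally uniform convergence. Condition (iii) is exactly the Lindeberg condition, and it also yields $\sup_{s\le T_1}|\Delta L^n_s|\to^p0$, so $L^n$ is C-tight and the limit is continuous. The remaining ingredient, which identifies the limit as $\int g\,dW$ with $W$ independent of $\calf$ (hence of $g$), is the asymptotic orthogonality $\langle L^n,N\rangle_{T_1}\to^p0$ for bounded $\F$-martingales $N$. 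Against the continuous part $N^c$ of $N$ this is automatic, since $L^n$ is purely discontinuous and so $[L^n,N^c]\equiv0$. Against the purely discontinuous part $N^d$ one argues by a two-level truncation: splitting $L^n$ into its jumps above and below $\ep$ and $N^d$ into its jumps above and below $\delta$, the above-$\ep$ part of $L^n$ is controlled by Cauchy--Schwarz $|\langle L^n,N\rangle|\le\langle L^n,L^n\rangle^{1/2}\langle N,N\rangle^{1/2}$ together with (iii); the below-$\ep$ part of $L^n$, tested against the finitely many jumps of $N^d$ of size $>\delta$, is bounded by $\ep$ times an a.s.\ finite random variable; and the angle bracket of the below-$\delta$ part of $N^d$ tends a.s.\ to $0$ as $\delta\downarrow0$. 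Passage from $[L^n,N]$ to $\langle L^n,N\rangle$ uses the $L^2$-bounds just obtained for uniform integrability. Feeding these facts into the cited theorem yields $L^n\to^{d_s}\int_{T_0}^\cdot g_s\,dW_s$ in $\bbD([T_0,T_1];\bbR^\sfd)$ with $W$ an ${\sf r}$-dimensional standard Wiener process, defined on an extension of $(\Omega,\calf,P)$ and independent of $\calf$, whence independent of $g$.

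The main obstacle is this asymptotic orthogonality step: it is not among the three listed hypotheses, so one must see that it is forced by the purely discontinuous structure of $L^n$ together with the Lindeberg condition (iii); carrying out the double-truncation estimate above --- equivalently, checking the ``nesting'' hypothesis of whichever stable martingale CLT one cites --- is where the real care lies. Everything else is bookkeeping with integer-valued random measures and their compensators.
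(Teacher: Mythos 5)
Your proposal is correct and follows essentially the same route as the paper's Appendix proof: recognize $L^n$ as a purely discontinuous locally square-integrable martingale with continuous $\langle L^n,L^n\rangle$, read the angle-bracket convergence off (ii) and the Lindeberg condition off (iii), establish the asymptotic orthogonality/nesting hypothesis against reference martingales by a jump-size truncation exploiting the purely discontinuous structure, and invoke Jacod's stable martingale CLT. The only cosmetic difference is in the bookkeeping of the orthogonality step: the paper truncates only the reference martingale ${\sf M}$ by jump size, handles the big-jump part via $\sup_{s}|\Delta L^n_s|\to^p 0$ and the finitely many large jumps of ${\sf M}$, and then passes from $\mathrm{Var}[L^n,{\sf M}]\to^p 0$ to $\langle L^n,{\sf M}\rangle\to^p 0$ by L-domination and the Lenglart inequality, whereas you truncate both $L^n$ and $N^d$ and appeal to uniform integrability for the same passage --- substantively the same argument.
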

We leave a sketch of proof for reader's convenience in Appendix. 
}

%The stable convergence on $\calf$ is denoted by $d_s$. 
%Let $\zeta$ be a $\sfp$-dimensional standard Gaussian random vector independent of $\calf$. 
%
\begin{lemma*}\label{260716-2} 
$n^{-1/2}\partial_\theta\ell_n(\theta^*)\to^{d_s} \Gamma^{1/2}\zeta$ as $n\to\infty$, 
where $\zeta$ is a $\sfp$-dimensional standard Gaussian random vector defined on an extended 
probability space of $(\Omega,\calf,P)$ and independent of $\calf$, and 
$d_s$ denotes the $\calf$-stable convergence. 
\end{lemma*}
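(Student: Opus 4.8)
The plan is to apply the mixed-normal limit theorem for martingales with jumps (Lemma \ref{270730-1}) to the score vector. First I would express $n^{-1/2}\partial_\theta\ell_n(\theta^*)$ in the form $L^n_{T_1}$ appearing in \eqref{271106-1}. Starting from \eqref{260716-3}, write
\beas
n^{-1/2}\partial_\theta\ell_n(\theta^*)
&=&
\sum_{\alpha\in\cali}\int_{T_0}^{T_1}
\lambda^{n,\alpha}(t,\theta^*)^{-1}\partial_\theta\lambda^{n,\alpha}(t,\theta^*)\,
n^{-1/2}d\tilde{N}^{n,\alpha}_t.
\eeas
Here each $N^{n,\alpha}$ is the integer-valued random measure $\mu^{n,\alpha}$ on ${\sf E}_\alpha=\bbR\setminus\{0\}$ (concentrated on the atom $x=1$, so ${\sf d}_\alpha=1$) with compensator $\nu^{n,\alpha}(dt)=n\lambda^{n,\alpha}(t,\theta^*)dt$, and the predictable integrand is
$c^n_\alpha(t,x)=x\,n^{-1/2}\lambda^{n,\alpha}(t,\theta^*)^{-1}\partial_\theta\lambda^{n,\alpha}(t,\theta^*)$ (which vanishes when $\lambda^{n,\alpha}(t,\theta^*)=0$, consistently with [A4]); then $\tilde\mu^{n,\alpha}=\mu^{n,\alpha}-\nu^{n,\alpha}$ and the displayed sum equals $L^n_{T_1}$.

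Next I would verify the three hypotheses of Lemma \ref{270730-1}. Condition (i) is immediate from the standing assumption that any two components of $N^n$ share no common jumps and from non-atomicity of the continuous compensator $n\int_{T_0}^\cdot\lambda^{n,\alpha}(s,\theta^*)ds$. For condition (ii), the predictable quadratic variation is
\beas
\sum_\alpha\int_{T_0}^t\int_{{\sf E}_\alpha}c^n_\alpha(s,x)^{\otimes2}\nu^{n,\alpha}(ds,dx)
&=&
\sum_\alpha\int_{T_0}^t
(\partial_\theta\lambda^{n,\alpha})^{\otimes2}(\lambda^{n,\alpha})^{-1}(s,\theta^*)\,ds,
\eeas
and I would show this converges in probability to $\sum_\alpha\int_{T_0}^t(\partial_\theta\lambda^{\infty,\alpha})^{\otimes2}(\lambda^{\infty,\alpha})^{-1}(s,\theta^*)ds=\int_{T_0}^t g_s^{\otimes2}ds$ with $g_sg_s^\top$ the integrand, i.e.\ $g$ a square root of $\sum_\alpha(\partial_\theta\lambda^{\infty,\alpha})^{\otimes2}(\lambda^{\infty,\alpha})^{-1}(t,\theta^*)$; at $t=T_1$ this is exactly $\Gamma$ of \eqref{270614-1}. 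This convergence follows by a localization argument with the stopping times $\tau^n_A$ of \eqref{260809-7}: on $\{\tau^n_A=T_1\}$ the ratio $(\partial_\theta\lambda^{n,\alpha})^{\otimes2}/\lambda^{n,\alpha}$ is bounded and its difference from the limiting integrand is controlled, as in Lemma \ref{260618-1}, by $\sum_{j=0}^1\int_{T_0}^{T_1}|\partial_\theta^j\lambda^{n,\alpha}(t,\theta^*)-\partial_\theta^j\lambda^{\infty,\alpha}(t,\theta^*)|dt$, which tends to $0$ in probability by Lemma \ref{260808-3} under $[A1]_1$, $[A2]_1$, $[A3]_1$, together with the tightness of $\{\odg^n(T_1)\}$ from [A4]; the uniform-in-$t$ upgrade uses the tightness of $\{\sup_{t,\theta}|\partial_\theta^j\lambda^n(t,\theta)|\}_{n\in\bbN}$ for $j=0,1$ from $[A1]_2$, $[A2]_2$. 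Condition (iii), the Lindeberg-type negligibility, holds because jumps of $N^{n,\alpha}$ have size $1$, so $|c^n_\alpha(s,x)|=n^{-1/2}|\lambda^{n,\alpha}(s,\theta^*)^{-1}\partial_\theta\lambda^{n,\alpha}(s,\theta^*)|$ is $O(n^{-1/2})$ uniformly on $\{\tau^n_A=T_1\}$; hence $\{|c^n_\alpha(s,x)|>\ep\}=\emptyset$ for $n$ large on that event, and $P[\tau^n_A<T_1]$ can be made small by tightness, as in the proof of Lemma \ref{260803-10}.

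Having established (i)--(iii), Lemma \ref{270730-1} gives $L^n\to^{d_s}\int_{T_0}^\cdot g_sdW_s$, and evaluating at $T_1$ yields $n^{-1/2}\partial_\theta\ell_n(\theta^*)\to^{d_s}\int_{T_0}^{T_1}g_sdW_s$; since $W$ is a standard Wiener process independent of $\calf$ (hence of $g$), the limit is, conditionally on $\calf$, centered Gaussian with covariance $\int_{T_0}^{T_1}g_sg_s^\top ds=\Gamma$, so it equals $\Gamma^{1/2}\zeta$ in law for a standard Gaussian $\zeta\perp\calf$. The $\calf$-stable convergence is exactly what the lemma delivers. The main obstacle I anticipate is item (ii): carrying out the localization carefully so that the uniform boundedness of the integrands and the $L^1$-convergence $\int|\partial_\theta^j\lambda^{n,\alpha}-\partial_\theta^j\lambda^{\infty,\alpha}|dt\to^p0$ combine into convergence in probability of the predictable quadratic variation for every fixed $t$, while keeping the exceptional-set probabilities $P[\tau^n_A<T_1]$ uniformly small — this is the step where the assumptions $[A1]$--$[A4]$ are genuinely used, the rest being soft.
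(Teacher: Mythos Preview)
Your proposal is correct and follows essentially the same route as the paper: both cast $n^{-1/2}\partial_\theta\ell_n(\theta^*)$ as $L^n_{T_1}$ in the form \eqref{271106-1} with $\mu^{n,\alpha}$ concentrated at $x=1$ and $c^n_\alpha(t,x)=n^{-1/2}\lambda^{n,\alpha}(t,\theta^*)^{-1}\partial_\theta\lambda^{n,\alpha}(t,\theta^*)$, verify (ii) by the localization argument of Lemma \ref{260618-1}, handle (iii) via tightness of $\{\lambda^{n,\alpha}(t,\theta^*)^{-1}\partial_\theta\lambda^{n,\alpha}(t,\theta^*)\}$ (the paper phrases this as the Lyapunov condition, you phrase it as the jump size being $O(n^{-1/2})$ on $\{\tau^n_A=T_1\}$; these are the same observation), and then invoke Lemma \ref{270730-1}.
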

\proof 
{\colorc
Let 
$\mu^{n,\alpha}(dt,dx) = N^{n,\alpha}(dt)\times\delta_1(dx)$, 
$\nu^{n,\alpha}(dt,dx) =n \lambda^{n,\alpha}(t,\theta^*)dt\>\delta_1(dx)$ and 
\beas 
c^n_\alpha(t,x) &=& 
n^{-1/2} \lambda^{n,\alpha}(t,\theta^*)^{-1}
\partial_\theta\lambda^{n,\alpha}(t,\theta^*).
\eeas
Then $L^n_t$ in (\ref{271106-1}) has an expression 
\beas 
L^n_t &=& 
n^{-1/2}\sum_\alpha \int_{t\wedge T_0}^t \lambda^{n,\alpha}(t,\theta^*)^{-1}
\partial_\theta\lambda^{n,\alpha}(t,\theta^*)
d\tilde{N}^{n,\alpha}_t
\eeas
for $t\in \hat{I}$. 
Following the argument in the proof of Lemma \ref{260618-1}, we see 
the convergence in (ii) of Lemma \ref{270730-1} holds for 
\beas 
\int_{T_0}^tg_s^{\otimes2}ds
&=&
\int_{T_0}^t
(\partial_\theta\lambda^{\infty,\alpha})^{\otimes2}(s,\theta^*)\big(\lambda^{\infty,\alpha}(s,\theta^*)\big)^{-1}
ds.
\eeas
Tightness of the family 
\beas
\bigg\{
\sum_{j=0}^1\sup_{t\in[T_0,T_1]}|\partial_\theta^j\lambda^{n,\alpha}(t,\theta^*)|,\>
\sup_{t\in[T_0,T_1]}\lambda(t,\theta^*)^{-1}1_{\{\lambda^{n,\alpha}(t,\theta^*)\not=0\}};\>
\alpha\in\cali,\>n\in\bbN\bigg\}
\eeas
verifies (iii) of Lemma \ref{270730-1} via the Lyapunov condition.
}
\begin{en-text}
Let ${\sf M}$ be an $\F$-locally square-integrable (purely discontinous) martingale with 
the canonical representation 
\beas 
{\sf M}_t &=& B_t+\int_{\hat{T}_0}^t \int z1_{\{|z|\leq1\}}\>(\mu-\nu)(ds,dz),
\eeas
where $B_t$ is a bounded variational process and $\mu$ is an integer-valued random measure 
compensated by $\nu$. 
Extend $\tilde{N}^{n,\alpha}$ to $[\hat{T}_0,T_1]$ by $\tilde{N}^{n,\alpha}_{t\wedge T_0}$. 
For $\ep>0$, let 
\beas 
{\sf M}^{\underline{\ep}} &=& B_t + \int_{\hat{T}_0}^t \int z1_{\{\ep<|z|\leq1\}}\>(\mu-\nu)(ds,dz)
\eeas
and let 
\beas 
{\sf M}^{\overline{\ep}} &=& \int_{\hat{T}_0}^t \int z1_{\{|z|\leq\ep\}}\>(\mu-\nu)(ds,dz).
\eeas
Then
\beas 
{\rm Var}\l[n^{-1/2}\tilde{N}^{n,\alpha},{\sf M}\r]
&=&
{\rm Var}\l[n^{-1/2}\tilde{N}^{n,\alpha},{\sf M}^{\underline{\ep}}\r]
+{\rm Var}\l[n^{-1/2}\tilde{N}^{n,\alpha},{\sf M}^{\overline{\ep}}\r]
\\&\leq&
n^{-1/2}\sum_{s\leq\cdot}\Delta N^{n,\alpha}_s|\Delta{\sf M}^{\underline{\ep}}_s|
\>1_{\{|\Delta{\sf M}^{\underline{\ep}}_s|>\ep\}}
+\l(n^{-1}N^{n,\alpha}\r)^{1/2}\l[{\sf M}^{\overline{\ep}},{\sf M}^{\overline{\ep}}\r]^{1/2}
\\&\leq&
n^{-1/2}\l(2\sup_{s\leq T_1} |{\sf M}_s|\r)\>\#\{s\leq T_1;\>|\Delta{\sf M}^{\underline{\ep}}_s|>\ep\}
\\&&
+\l(n^{-1}N^{n,\alpha}_{T_1}\r)^{1/2}\l[{\sf M}^{\overline{\ep}},{\sf M}^{\overline{\ep}}\r]_{T_1}^{1/2}.
\eeas
By definition, $\lim_{\ep\down0}\l[{\sf M}^{\overline{\ep}},{\sf M}^{\overline{\ep}}\r]_{T_1}=0$ a.s. 
Therefore 
${\rm Var}[n^{-1/2}\tilde{N}^{n,\alpha},{\sf M}]_{T_1} \to^p0$, and hence 
${\rm Var}[L^n,{\sf M}]_{T_1} \to^p0$, consequently 
$\langle L^n,{\sf M}\rangle_t \to^p0$
 as $n\to\infty$ for every $t$. 
 Obviously,  
$\langle L^n,{\sf M}\rangle=0$ for $\F$-continuous martingales ${\sf M}$. 
\end{en-text}
\begin{en-text}
We will applying Lemma \ref{270730-1} to 
\beas 
c^n_\alpha(t,x) 
&=&
\eeas

By Lemma \ref{251123-1} together with the orthogonality between $\tilde{N}^{n,\alpha}$ $(\alpha\in\cali)$, 
and by Lemma {\colorb \ref{260808-3}}, %\ref{260604-1}, 
we obtain  
{\colorb 
\beas 
\langle n^{-1/2}\tilde{N}^{n,\alpha},n^{-1/2}\tilde{N}^{n,\alpha'}\rangle_t 
&=&
\delta_{\alpha,\alpha'}
 \int_{T_0}^t \lambda^{n,\alpha}(s,\theta^*)^2ds
\>\to^p\>
\delta_{\alpha,\alpha'}
 \int_{T_0}^t \lambda^{\infty,\alpha}(s,\theta^*)^2ds
\eeas
uniformly in $t$ as $n\to\infty$. 
}
As a result, 
\beas 
\langle L^n,L^n\rangle_t 
&\to^p& 
\sum_\alpha \int_{t\wedge T_0}^t \lambda^{\infty,\alpha}(t,\theta^*)^{-1}
\l(\partial_\theta\lambda^{\infty,\alpha}(t,\theta^*)\r)^{\otimes2}
dt
\eeas
as $n\to\infty$ for every $t$. 
Obviously, $\sum_{s:s\leq T_1}1_{\{|\Delta L^n_s|>\ep\}}{\colorb \to^p}0$ for large $n\to\infty$, for every $\ep>0$. 
\end{en-text}
Thus, {\colorb by Lemma \ref{270730-1}, }%in view of (\ref{260716-3}), 
we obtained the stable convergence of $n^{-1/2}\partial_\theta\ell_n(\theta^*)$. 
\qed \halflineskip

\begin{theorem*}\label{260716-4}
Suppose that $[A1]_3$, $[A2]_3$, $[A3]_{\colorrz 1}$ and ${\colorr [A4]}$ are fulfilled. 
Suppose that the estimator $\hat{\theta}_n$ of $\theta$ satisfies 
$\hat{\theta}_n\to^p\theta^*$ and $\partial_\theta\ell_n(\hat{\theta}_n)=o_p(n^{1/2})$ 
as $n\to\infty$. 
Then 
\beas 
\sqrt{n}\big(\hat{\theta}_n-\theta^*\big) 
&\to^{d_s}& 
\Gamma^{-\half}\zeta
\eeas
as $n\to\infty$, 
where $\zeta$ is a $\sfp$-dimensional standard Gaussian random vector given in Lemma \ref{260716-2}. 
\end{theorem*}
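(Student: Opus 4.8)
The plan is to obtain the limit from a one-term Taylor expansion of the estimating function $\partial_\theta\ell_n$ about $\theta^*$, combined with the approximate first-order condition and Lemmas~\ref{260618-1} and \ref{260716-2}. First, using the consistency $\hat\theta_n\to^p\theta^*$ and the openness of $\Theta$, I would fix a deterministic sequence $r_n\downarrow0$ such that $\{\theta;\,|\theta-\theta^*|\le r_n\}\subset\Theta$ and $P[|\hat\theta_n-\theta^*|\le r_n]\to1$ (such $r_n$ exists by a standard diagonal argument from $\hat\theta_n\to^p\theta^*$). Put $\Omega_n=\{|\hat\theta_n-\theta^*|\le r_n\}$. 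On $\Omega_n$ the segment joining $\theta^*$ to $\hat\theta_n$ stays in $\Theta$, and since $[A1]_3$, $[A2]_3$ and $[A4]$ make $\theta\mapsto\ell_n(\theta)$ twice continuously differentiable near $\theta^*$, the fundamental theorem of calculus gives
\[
\partial_\theta\ell_n(\hat\theta_n)-\partial_\theta\ell_n(\theta^*)
=\Bigl(\int_0^1\partial_\theta^2\ell_n\bigl(\theta^*+u(\hat\theta_n-\theta^*)\bigr)\,du\Bigr)(\hat\theta_n-\theta^*).
\]

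Next I would set $\Gamma_n=-n^{-1}\int_0^1\partial_\theta^2\ell_n\bigl(\theta^*+u(\hat\theta_n-\theta^*)\bigr)\,du$. On $\Omega_n$,
\[
|\Gamma_n-\Gamma|\le\sup_{|\theta-\theta^*|\le r_n}\bigl|n^{-1}\partial_\theta^2\ell_n(\theta)+\Gamma\bigr|,
\]
and Lemma~\ref{260618-1}, applied with the deterministic neighbourhoods $V_n=\{\theta;\,|\theta-\theta^*|\le r_n\}$ (which shrink to $\{\theta^*\}$, and whose hypotheses $[A1]_2$, $[A2]_2$, $[A3]_1$, $[A4]$ follow from those of the theorem), shows that this supremum tends to $0$ in probability; hence $\Gamma_n\to^p\Gamma$. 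Multiplying the Taylor identity by $n^{-1/2}$ and using $\partial_\theta\ell_n(\hat\theta_n)=o_p(n^{1/2})$,
\[
\Gamma_n\,\sqrt{n}\,(\hat\theta_n-\theta^*)=n^{-1/2}\partial_\theta\ell_n(\theta^*)+o_p(1).
\]
Since $\Gamma$ is a.s.\ invertible, $\Gamma_n$ is invertible with probability tending to $1$ and $\Gamma_n^{-1}\to^p\Gamma^{-1}$, so on that event $\sqrt{n}(\hat\theta_n-\theta^*)=\Gamma_n^{-1}\bigl(n^{-1/2}\partial_\theta\ell_n(\theta^*)+o_p(1)\bigr)$.

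Finally I would invoke Lemma~\ref{260716-2}, $n^{-1/2}\partial_\theta\ell_n(\theta^*)\to^{d_s}\Gamma^{1/2}\zeta$ with $\zeta$ a standard Gaussian vector independent of $\calf$. Because $\Gamma^{-1}$ is $\calf$-measurable and $\Gamma_n^{-1}\to^p\Gamma^{-1}$, the pair $\bigl(\Gamma_n^{-1},\,n^{-1/2}\partial_\theta\ell_n(\theta^*)\bigr)$ converges $\calf$-stably to $(\Gamma^{-1},\Gamma^{1/2}\zeta)$; the continuous mapping theorem for stable convergence together with symmetry of $\Gamma$ then yields
\[
\sqrt{n}\,(\hat\theta_n-\theta^*)\to^{d_s}\Gamma^{-1}\Gamma^{1/2}\zeta=\Gamma^{-1/2}\zeta.
\]
I expect the main obstacle to be the transfer of the ``uniform-over-a-deterministic-shrinking-neighbourhood'' convergence of Lemma~\ref{260618-1} to the random segment $[\theta^*,\hat\theta_n]$: choosing the deterministic rate $r_n$ tied to the consistency of $\hat\theta_n$ is the device that makes this precise, while the $\calf$-measurability of $\Gamma^{-1}$ and the mixed-normal (stable) limit of the score are what legitimise the concluding combination in the non-ergodic regime.
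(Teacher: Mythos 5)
Your proof is correct and follows essentially the same route as the paper's: a deterministically-shrinking-ball localization (the paper's $V_n$, your $r_n$), the fundamental-theorem Taylor expansion of the score, Lemma~\ref{260618-1} for $n^{-1}\int_0^1\partial_\theta^2\ell_n\to^p-\Gamma$ on that ball, and Lemma~\ref{260716-2} for the stable CLT of the score. The paper just says "then the result is easily obtained" after the Taylor identity; your version correctly fills in the omitted steps, in particular the use of $\calf$-measurability of $\Gamma^{-1}$ to combine $\Gamma_n^{-1}\to^p\Gamma^{-1}$ with the $\calf$-stable limit of $n^{-1/2}\partial_\theta\ell_n(\theta^*)$.
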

\proof It is easy to obtain the result from Lemmas \ref{260618-1} and \ref{260716-2}. 
{\colorb 
Indeed, there is a sequence $\{V_n\}_{n\in\bbN}$ of open balls centered at $\theta^*$ 
such that the diameter of $V_n$ tends to $0$ as $n\to\infty$ and $P[\hat{\theta}_n\in V_n]\to1$. 
On the event $\{\hat{\theta}_n\in V_n\}$, one has 
\beas 
n^{-1/2}\partial_\theta\ell_n(\hat{\theta}_n)[u]-n^{-1/2}\partial_\theta\ell_n(\theta^*)[u]
&=&
n^{-1}\int_0^1 \partial_\theta^2\ell_n(\theta_n(s))ds\big[n^{1/2}(\hat{\theta}_n-\theta^*),u\big]
\eeas
for $u\in\bbR^{\sf p}$, where $\theta_n(s)=\theta^*+s(\hat{\theta}_n-\theta^*)$. 
Then the result is easily obtained. 
}
\qed\halflineskip

\section{The quasi likelihood analysis: QMLE and QBE}\label{qla-pp}

In Theorem \ref{260716-4}, we obtained a limit theorem for the quasi maximum likelihood estimator $\hat{\theta}_n$. 
It is the first step of analysis of the estimator, however, more precise estimates 
for the tail of the distribution of the estimator will be indispensable to develop basic theory of statistical inference 
such as asymptotic decision theory, prediction, higher-order efficiency, information criteria, etc. 
This section presents the so-called quasi likelihood analysis, that gives certain tail probability estimates 
of the quasi {\colorb maximum} likelihood estimator (QMLE) and the quasi Bayesian estimator (QBE). 

\subsection{Polynomial type large deviation inequality for the quasi likelihood random field}
We shall work with the statistical random field
\beas 
\bbH_n(\theta) &=& \ell_n(\theta)
\eeas
on $\Theta$ and apply the frame of the quasi likelihood analysis in \cite{Yoshida2011}. 
The random fields $\bbZ_n$ is defined on $\bbU_n=\{u\in\bbR^{\sfp};\>\theta_u\in\Theta\}$, 
$\theta_u=\theta^*+n^{-1/2}u$, by 
\beas 
\bbZ_n(u)
&=&
\exp\big(%\ell_n(\theta_u)-\ell_n(\theta)
\bbH_n(\theta_u)-\bbH_n(\theta^*)
\big)
\\&=&
\exp\bigg(
\sum_{\alpha=1}^\sfd
\int_{T_0}^{T_1} \log\frac{\lambda^{n,\alpha}(t,\theta_u)}
{\lambda^{n,\alpha}(t,\theta^*)}
\>dN^{n,\alpha}_t
\\&&
-\sum_{\alpha=1}^\sfd\int_{T_0}^{T_1}{\colorg n}
\big[\lambda^{n,\alpha}(t,\theta_u)-\lambda^{n,\alpha}(t,\theta^*)\big]dt
\bigg).
\eeas

%

%
%
%{\colorg \\$\up\up\up\up\up\up\up\up\up\up\up\up\up\up\up\up\up\up\up\up\up\up\up\up\up\up\up\up\up\up\up\up\up\up\up\up\up\up\up\up$}
Under necessary regularity conditions specified later, 
we define a random vector $\Delta_n$ by 
\beas 
\Delta_n[u] 
&=&
\frac{1}{\sqrt{n}}
\sum_{\alpha=1}^\sfd \int_{T_0}^{T_1}
\lambda^{n,\alpha}(t,\theta^*)^{-1}
\partial_\theta{\lambda}^{n,\alpha}(t,\theta^*)[u]
d\tilde{N}^{n,\alpha}_t
\eeas
and a random matrix $\Gamma_n(\theta)$ by 
\beas 
\Gamma_n(\theta) [u^{\otimes2}]
&=&
-\partial_\theta^2\bbH_n(\theta)[(n^{-1/2}u)^{\otimes2}]
\eeas
for $u\in\bbR^\sfp$.

Let 
\beas 
r^{(1)}_n(u) 
&=&
\sum_{\alpha=1}^\sfd \int_{T_0}^{T_1}
\bigg\{\log\frac{\lambda^{n,\alpha}(t,\theta_u)}{\lambda^{n,\alpha}(t,\theta^*)}
-{\lambda}^{n,\alpha}(t,\theta^*)^{-1}
\partial_\theta{\lambda}^{n,\alpha}(t,\theta^*)[n^{-1/2}u]
\bigg\}d\tilde{N}^{n,\alpha}_t,
\eeas
and 
\beas 
r^{(2)}_n(u) &=& {\colorg -}
\sum_{i=1}^3r^{(2i)}_n(u),
\eeas
where 
\beas 
r^{(21)}_n(u)
&=&
\sum_{\alpha=1}^\sfd
\int_{T_0}^{T_1}\bigg\{
\frac{\lambda^{n,\alpha}(t,\theta_u)}{\lambda^{n,\alpha}(t,\theta^*)}-1
-\log\frac{\lambda^{n,\alpha}(t,\theta_u)}{\lambda^{n,\alpha}(t,\theta^*)}
\\&&\qquad
-\half\bigg[\frac{\lambda^{n,\alpha}(t,\theta_u)}{\lambda^{n,\alpha}(t,\theta^*)}-1
\bigg]^2
\bigg\}n\lambda^{n,\alpha}(t,\theta^*)dt
\eeas
\beas 
r^{(22)}_n(u)
&=&
\sum_{\alpha=1}^\sfd
\half\int_{T_0}^{T_1}\bigg\{
\bigg[\lambda^{n,\alpha}(t,\theta_u)-\lambda^{n,\alpha}(t,\theta^*)
\bigg]^2 %n\lambda^{n,\alpha}(t,\theta^*)
\\&&\qquad
-
\big(\partial_\theta{\lambda}^{n,\alpha}(t,\theta^*)[n^{-1/2}u]
\big)^2 \bigg\}
n{\lambda}^{n,\alpha}(t,\theta^*)^{-1}dt,
\eeas
%%
%\beas 
%r^{(23)}_n(u,v)
%&=&
%\half\int_{T_0}^{T_1}\bigg\{
%\bigg[\frac{\bar{\lambda}^{n,\alpha}(t,\theta_u,\gamma_v)}{\bar{\lambda}^{n,\alpha}(t,\theta^*,\gamma^*)}-1
%\bigg]^2 n\bar{\lambda}^{n,\alpha}(t,\theta^*,\gamma^*)
%\\&&\qquad
%-
%\bigg[\frac{\bar{\lambda}^{\infty,\alpha}(t,\theta_u,\gamma_v)}{\bar{\lambda}^{\infty,\alpha}(t,\theta^*,\gamma^*)}-1
%\bigg]^2 n\bar{\lambda}^{\infty,\alpha}(t,\theta^*,\gamma^*)
%\bigg\}dt,
%\eeas
%%
\beas 
r^{(23)}_n(u)
&=&
\sum_{\alpha=1}^\sfd
\half\int_{T_0}^{T_1}
\bigg\{
{\lambda}^{n,\alpha}(t,\theta^*)^{-1}\big(\partial_\theta{\lambda}^{n,\alpha}(t,\theta^*)[u]\big)^2
\\&&
-
{\lambda}^{\infty,\alpha}_0(t)^{-1}\big({\lambda}_1^{\infty,\alpha}(t)[u]\big)^2\bigg\}dt.
\eeas
\begin{en-text}
with 
\beas 
\lambda^{\infty}_0(t) 
&=& 
g_0^\infty(t)+\int_{\hat{T}_0}^t K_0^\infty(t,s)dX^\infty_s
\eeas
and 
\beas 
\lambda^{\infty}_1(t) 
&=& 
g_1^\infty(t)+\int_{\hat{T}_0}^t K_1^\infty(t,s)dX^\infty_s. 
\eeas
\end{en-text}
Let $r_n(u)=r^{(1)}_n(u)+r^{(2)}_n(u)$. 
\begin{en-text}
Moreover, let 
\beas 
\Gamma
&=&
\sum_\alpha
\int_{T_0}^{T_1}
\l[
{\lambda}_1^{\infty,\alpha}(t)
\r]^{\otimes2}
{\lambda}_0^{\infty,\alpha}(t)^{-1}dt.
\eeas
\end{en-text}
Then we have an expression of $\bbZ_n$ as 
\bea\label{lamn} 
\bbZ_n(u)
&=&
\exp\bigg(
\Delta_n[u]-\half\Gamma[(u)^{\otimes2}]+r_n(u)\bigg)
\eea
for $u\in\bbU_n$, which suggests the LAMN property of the random field $\bbH_n$.

Assume the condition (\ref{260810-1}) for $\Theta$. 
Moreover we suppose that 
the function $\Theta\ni\theta\mapsto\lambda^n(t,\theta)$ has continuous extension to $\bar{\Theta}$ 
when the QMLE is discusses.

Let $\varepsilon$ be a positive number less than $1/2$. 
Let $\bar{\bbN}=\bbN\cup\{\infty\}$. 
For mathematical validation of the asymptotic properties deduced below, we shall assume the following conditions. 

%\koko$\down\down\down\down\down\down\down\down\down\down$ Moment form
%
\bd
\item[[B1\!\!]]$_{\bar{j}}$
%{\bf (i)} 
For each $n\in\bar{\bbN}$, 
$K^n(t,s,\theta)$ is an $\bbR_+^\sfd\otimes\bbR_+^{\sfd_0}$-valued  
$\calf\times\bbB(J)\times\bbB(\Theta)$-measurable 
function satisfying the following conditions. 
\bd
\im[(i)] 
For each $(n,t,\theta)\in \bbN\times I\times\Theta$, the process 
$[\hat{T}_0,t)\ni s\mapsto K^n(t,s,\theta)$ is %of Class $C^1_B$ and 
$(\calf_s)_{s\in[\hat{T}_0,t)}$-optional. 

\im[(ii)] 
For each $(n,t,s)\in\bar{\bbN}\times J$, the mapping 
$\Theta\ni\theta\mapsto K^{n}(t,s,\theta)$ is $\bar{j}$ times differentialble a.s.,
$\sup_{(s,\theta)\in[\hat{T}_0,t)\times\Theta}|\partial_\theta^j K^n(t,s,\theta)|<\infty$ a.s. 
for $t\in I$, 
and 
\beas 
\sum_{j=0}^{\bar{j}}
\sup_{(n,s,t)\in \bar{\bbN}\times J}\sup_{\theta\in\Theta} \|\partial_\theta^iK^n(t,s,\theta)\|_p 
&<&
\infty
\eeas
for every $p>1$. 

\im[(iii)] 
For each {\colorb $(n,t)\in\bbN\times I$}, the mappings 
$[\hat{T}_0,t)\ni s\mapsto \partial_\theta^i K^n(t,s,\theta^{\colorb *})$ ($i=0,1$)
are differentialble a.s., 
$\sup_{(s,\theta)\in[\hat{T}_0,t)\times\Theta}| \partial_s\partial_\theta^i K^n(t,s,\theta^{\colorb *})|<\infty$ a.s. 
for $t\in I$, 
and 
%\beas 
%\sup_{(n,s,t)\in J}\sup_{\theta\in\Theta} 
%\| \partial_s K^n(t,s,\theta)\|_p
%&<& \infty
%\eeas
\beas 
\sup_{{\colorb (n,t)\in \bbN\times I}} 
\sum_{i=0}^1
\int_{\hat{T}_0}^t\|  \partial_s\partial_\theta^iK^n(t,s,\theta^{\colorb *})\|_pds
&<& \infty
\eeas
for every $p>1$.

\im[(iv)] 
For every $p>1$, 
%There exist an $\bbR_+^\sfd\otimes\bbR_+^{\sfd_0}$-valued random field $K^\infty(t,s,\theta)$ 
%%that is differentiable in $(s,\theta)\in [\hat{T}_0,t)\times\Theta$ and 
%%and an $\bbR^\sfd\otimes\bbR^{\sfd_0}\otimes\bbR^\sfp$-valued random field $K_1^\infty(t,s)$ 
%that is differentiable in $\theta\in\Theta$ and 
\beas 
n^\varepsilon
\sum_{j=0}^1
\sup_{{\colorb (t,\theta)\in I\times\Theta}}
\l\|{\colorb \sup_{s\in[\hat{T}_0,t)}\big|}
\partial_{\theta}^jK^{n}(t,s,\theta)
-\partial_{\theta}^jK^{\infty}(t,s,\theta)
{\colorb \big|}
\r\|_p
\to0
\eeas
as $n\to\infty$.% for every $p>1$. 
 
\ed
\ed
\begin{description}
\item[[B2\!\!]]$_{\bar{j}}$ 
%$g^n(t,\gamma)=[g^{n,0}(t,\gamma),...,g^{n,\sfd}(t,\gamma)]'$ 
For each $(\alpha,n)\in\cali\times\bar{\bbN}$, 
%satisfies is an $\bbR_+^{1+\sfd}$-valued process satisfying 
$g^{n,\alpha}(t,\theta)$ is an 
nonnegative 
$\calf\times\bbB(I)\times\bbB(\Theta)$-measurable function 
%defined on $(\Omega,\calf,P^n)$, 
for which the following conditions are fulfilled. 
\bd
\im[(i)] For each $(n,\alpha,\theta)\in\bbN\times\cali\times\Theta$, 
the process $(g^{n,\alpha}(t,\theta))_{t\in I}$ is predictable.  
%\koko $L^p$-boundedness of $g^{-1}$ here !!

\im[(ii)] For each $(n,t)\in\bar{\bbN}\times I$, the mapping $\Theta\ni\theta\mapsto g^{n}(t,\theta)$ is 
$\bar{j}$ times differentiable a.s. and 
\beas 
\sum_{j=0}^{\bar{j}} \sup_{(n,t)\in\bar{\bbN}\times I}\sup_{\theta\in\Theta}
\big\|(\partial_\theta)^j g^n(t,\theta)\big\|_p
&<&\infty
\eeas
for every $p>1$. 

\im[(iii)] 
For every $p>1$, 
%There exist an $\bbR_+^\sfd$-valued random field $g^\infty(t,\theta)$ %and 
%that is differentiable in $\theta\in\Theta$ and 
%%an $\bbR^\sfd\otimes\bbR^\sfp$-valued process $g_1^\infty(t,\theta^*)$ such that 
\beas 
n^\varepsilon
\sum_{j=0}^1\sup_{t\in I}\sup_{\theta\in\Theta}
\l\| \partial_\theta^jg^{n}(t,\theta)
-\partial_\theta^jg^{\infty}(t,\theta)\r\|_p
&\to&0
\eeas
as $n\to\infty$. 
\ed
\ed

\bd
\item[[B3\!\!]] 
For each {\colorb $(\beta,n)\in\cali_0\times{\colorc\bar{\bbN}}$}, $(X^{n,\beta}_t)_{t\in\hat{I}}$ is a non-decreasing 
$(\calf_t)_{t\in\hat{I}}$-adapted process
%, and for each $\beta\in\cali_0$, there exists a non-decreasing process $(X^{\infty,\beta}_t)_{t\in I}$ 
such that 
\beas 
\sup_{(n,t)\in \bbN\times\hat{I}}\big\|X^{n,\beta}_t\big\|_p
\><\>\infty\quad\mbox{and}\quad
%\sup_{(n,t)\in \bbN\times\hat{I}}
n^\varepsilon
\sup_{t\in \hat{I}}
\big\|X^{n,\beta}_t
-X^{\infty,\beta}_t
\big\|_p
\>\to\>0
\eeas
as $n\to\infty$,  
for every $p>1$. 
($\hat{I}=[\hat{T}_0,T_1]$.)
\ed
%\halflineskip

{\colorr 
\bd
\item[[B4\!\!]] 
For each $(\omega,n,\alpha,t,\theta)\in\Omega\times\bbN\times\cali\times I\times\Theta$, 
$\lambda^{n,\alpha}(t,\theta)=0$ if and only if $\lambda^{n,\alpha}(t,\theta^{\colorb *})=0$, 
and 
\beas 
\sup_{(n,t,\theta)\in I\times\Theta}\big\|\lambda^{n,\alpha}(t,\theta)^{-1}
{\colorr 1_{\{\lambda^{n,\alpha}(t,\theta)\not=0\}}}
\big\|_p 
&<& \infty
\eeas
for every $p>1$ and $\alpha\in\cali$. 
\ed
\halflineskip
}

Define the index $\chi_0$ by 
\beas 
\chi_0 &=& 
\inf_{\theta\in\Theta\setminus\{\theta^*\}}\frac{-\bbY(\theta)}{|\theta-\theta^*|^2},
\eeas
where $\bbY$ is given in (\ref{260810-11}). 
\halflineskip
The nondegeneracy of the key index $\chi_0$ will play an essential role in our argument. 
\bd
\item[[{\colorr B5}\!\!]]
For every $L>0$, there exists a constant $C_L$ such that 
\beas 
P[\chi_0<r^{-1}] &\leq& \frac{C_L}{r^L}
\eeas
for all $r>0$. 
\ed
%$\up\up\up\up\up\up\up\up\up\up$\koko 

\begin{lemma*}\label{260730-3}
{\colorb Suppose that Conditions $[B1]_4$, $[B2]_4$, $[B3]$ and $[B4]$ are satisfied. Then}
\beas 
\sup_{n\in\bbN} \bigg\|n^{-1}\sup_{\theta\in\Theta}\big|\partial_\theta^3\bbH_n(\theta)\big|\bigg\|_p
&<& \infty 
\eeas
for every $p>1$. 
\end{lemma*}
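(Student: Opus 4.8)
The plan is to differentiate $\bbH_n=\ell_n$ explicitly, express $\partial_\theta^k\log\lambda^{n,\alpha}$ for $k\le 4$ as a rational function of $\lambda^{n,\alpha}$ and its $\theta$-derivatives, bound the resulting pieces in $L^p$ uniformly in $(n,\theta)$, and finally pass from a fixed $\theta$ to the supremum over $\Theta$ by Sobolev's inequality. From (\ref{251010-1}),
\beas
\partial_\theta^k\bbH_n(\theta)
&=&
\sum_{\alpha\in\cali}\Big(\int_{T_0}^{T_1}\partial_\theta^k\log\lambda^{n,\alpha}(t,\theta)\,dN^{n,\alpha}_t
-n\int_{T_0}^{T_1}\partial_\theta^k\lambda^{n,\alpha}(t,\theta)\,dt\Big)
\eeas
for $k=3$, and also for $k=4$, which will be needed for the Sobolev step. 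By Fa\`a di Bruno's formula, $\partial_\theta^k\log\lambda^{n,\alpha}(t,\theta)$ is a universal polynomial in $\{\lambda^{n,\alpha}(t,\theta)^{-1}\partial_\theta^j\lambda^{n,\alpha}(t,\theta):1\le j\le k\}$, with $\lambda^{n,\alpha}$ entering the denominators to powers at most $k$; the implicit factor $1_{\{\lambda^{n,\alpha}(t,\theta)\not=0\}}$ is harmless since by $[B4]$ it vanishes exactly when $\lambda^{n,\alpha}(t,\theta^*)=0$, and $dN^{n,\alpha}$ a.s.\ charges no time at which the $\theta^*$-intensity vanishes. Finally, $\partial_\theta^j\lambda^{n,\alpha}(t,\theta)=\partial_\theta^jg^{n,\alpha}(t,\theta)+\sum_{\beta\in\cali_0}\int_{\hat{T}_0}^{t-}\partial_\theta^jK^{n,\alpha}_\beta(t,s,\theta)\,dX^{n,\beta}_s$.

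The second step bounds the building blocks. For $j\le 4$, $\sup_n\big\|\sup_{t\in I,\theta\in\Theta}|\partial_\theta^j\lambda^{n,\alpha}(t,\theta)|\big\|_p<\infty$ for every $p>1$: since $X^{n,\beta}$ is non-decreasing, the stochastic integral is dominated pathwise by $\big(\sup_{s\in[\hat{T}_0,t),\theta\in\Theta}|\partial_\theta^jK^{n,\alpha}_\beta(t,s,\theta)|\big)\big(X^{n,\beta}_{T_1}-X^{n,\beta}_{\hat{T}_0}\big)$, whence the estimate follows from $[B1]_4$ (ii), $[B2]_4$ (ii), $[B3]$ and H\"older's inequality (the passage from the pointwise-in-$(s,t)$ $L^p$ bounds of $[B1]_4$ (ii), $[B2]_4$ (ii) to $L^p$ bounds on the suprema over $s$ and $t$ uses the differentiability content of $[B1]_4$ (iii) and the right-continuity of $X^{n,\beta}$). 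Combined with $[B4]$, the generalized H\"older inequality then gives uniformly-in-$(n,t,\theta)$ $L^p$-bounds on every monomial appearing in $\partial_\theta^k\log\lambda^{n,\alpha}$ and on $\sup_t\lambda^{n,\alpha}(t,\theta^*)$. Now fix $\theta$ and split $dN^{n,\alpha}=d\tilde{N}^{n,\alpha}+n\lambda^{n,\alpha}(\cdot,\theta^*)\,dt$ with $\tilde{N}^{n,\alpha}$ as in (\ref{260429-111}). After dividing by $n$, the compensator part $\int_{T_0}^{T_1}\partial_\theta^k\log\lambda^{n,\alpha}(t,\theta)\,\lambda^{n,\alpha}(t,\theta^*)\,dt$ and the drift integral $\int_{T_0}^{T_1}\partial_\theta^k\lambda^{n,\alpha}(t,\theta)\,dt$ are dominated by $|I|$ times products of the quantities just bounded, hence are uniformly $L^p$-bounded. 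For the martingale part, write $n^{-1}\int_{T_0}^{T_1}\partial_\theta^k\log\lambda^{n,\alpha}(t,\theta)\,d\tilde{N}^{n,\alpha}_t=n^{-1/2}\big(n^{-1/2}\int_{T_0}^{T_1}\partial_\theta^k\log\lambda^{n,\alpha}(t,\theta)\,d\tilde{N}^{n,\alpha}_t\big)$, and control the inner integral in $L^p$, uniformly in $(n,\theta)$, by the Burkholder--Davis--Gundy inequality together with the iterative device of the proof of Lemma \ref{260803-10} — inequality (\ref{260809-1}) and the recursion following it, started from exponent $2^m$ and terminated by the $L^2$-isometry of the point-process martingale, under the localization by $\tau^n_A$ of (\ref{260809-7}), with tightness replaced everywhere by the $L^p$-estimates above and those in $[B3]$. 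Since $n^{-1/2}\le 1$, the martingale part is uniformly $L^p$-bounded too. This martingale moment bound is the main obstacle: the crude Burkholder--Davis--Gundy estimate produces moments of $[\tilde{N}^{n,\alpha}]=N^{n,\alpha}-N^{n,\alpha}_{T_0}$, i.e.\ of the counting process itself, which a priori is controlled only through its compensator $n\int\lambda^{n,\alpha}(\cdot,\theta^*)\,dt$, and closing this loop is precisely what the repeated halving of the exponent achieves.

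Collecting these bounds, $\sup_{n\in\bbN}\sup_{\theta\in\Theta}\big\|n^{-1}\partial_\theta^k\bbH_n(\theta)\big\|_p<\infty$ for $k=3,4$ and every $p>1$. For $p>\sfp$, Sobolev's inequality on the bounded set $\Theta\subset\bbR^\sfp$ gives $\sup_{\theta\in\Theta}|n^{-1}\partial_\theta^3\bbH_n(\theta)|^p\le C_p(\Theta)\sum_{i=0}^1\int_\Theta|n^{-1}\partial_\theta^{3+i}\bbH_n(\theta)|^p\,d\theta$, and taking expectations with Fubini yields $\sup_n\big\|n^{-1}\sup_{\theta\in\Theta}|\partial_\theta^3\bbH_n(\theta)|\big\|_p<\infty$ for $p>\sfp$; for $1\le p\le\sfp$ the bound follows since $P$ is a probability measure. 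This last step is exactly where the hypotheses $[B1]_4$, $[B2]_4$ (rather than $[B1]_3$, $[B2]_3$) are consumed: one must differentiate $\bbH_n$ once more than the order appearing in the statement.
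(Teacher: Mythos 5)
Your proof is correct and fills in, with full detail, exactly the outline the paper gives (write $\partial_\theta^j\lambda^n = \partial_\theta^j g^n + \int\partial_\theta^j K^n\,dX^n$, then apply the Burkholder--Davis--Gundy inequality and Sobolev's inequality as in (\ref{260809-3})), supplying the Fa\`a di Bruno expansion, the martingale/compensator split of $dN^{n,\alpha}$, the exponent-halving recursion, and the need for four rather than three $\theta$-derivatives, all of which the paper leaves implicit. One small imprecision worth removing: the stopping times $\tau^n_A$ of (\ref{260809-7}) are defined through the envelope processes $\dot K^n$, $\ddot K^n$ and their cousins, which exist in the $[A]$-framework but not under $[B1]$--$[B4]$, so you cannot literally ``localize by $\tau^n_A$''; fortunately localization is unnecessary here, because under $[B1]_4$ (ii), $[B2]_4$ (ii), $[B3]$ and $[B4]$ the integrands appearing in the recursion (the powers of the $\theta$-derivatives of $\log\lambda^{n,\alpha}$ and the process $\lambda^{n,\alpha}(\cdot,\theta^*)$) are already bounded in every $L^q$-norm uniformly in $(n,t,\theta)$, so each step of (\ref{260809-1}) closes directly by H\"older's inequality rather than by pathwise truncation followed by a tightness argument.
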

\proof
{\colorb
We have a representation 
\beas 
\partial_\theta^j\lambda^n(t,\theta) 
&=& 
\partial_\theta^jg^n(t,\theta)
+ \int_{\hat{T}_0}^{t-} \partial_\theta^jK^n(t,s,\theta)dX^n_s.
\eeas
Like (\ref{260809-3}), 
use Sovolev's inequality and the Burkholder-Davis-Gundy inequality to obtain the desired estimate. 
}
\begin{en-text}
With the representation 
\beas 
\partial_\theta^j\lambda^n(t,\theta) -\partial_\theta^j\lambda^\infty(t,\theta)
&=& 
\partial_\theta^jg^n(t,\theta)-\partial_\theta^jg^\infty(t,\theta)
%+\int_{\hat{T}_0}^{t-} (\partial_\theta^jK^n(t,s,\theta)-\partial_\theta^jK^\infty(t,s,\theta))dX^\infty_s 
\\&&
+ \int_{\hat{T}_0}^{t-} \partial_\theta^jK^n(t,s,\theta)dX^n_s 
- \int_{\hat{T}_0}^{t-} \partial_\theta^jK^\infty(t,s,\theta)dX^\infty_s 
\eeas
for $j=0,...,4$, 
we have 
\beas 
\sup_{(n,\theta)\in\bbN\times\Theta}\sum_{j=0}^4
\big\|\partial_\theta^j\lambda^n(t,\theta) -\partial_\theta^j\lambda^\infty(t,\theta)\big\|_p
&<&
\infty
\eeas
for every $p>1$. 
Use Sovolev's inequality and the Burkholder-Davis-Gundy inequality to obtain the desired estimate. 
\end{en-text}
\qed

\begin{lemma*}\label{260730-4}
For every $p>1$, 
\beas 
\sup_{n\in\bbN} \big\|n^\varepsilon|\Gamma_n(\theta^*)-\Gamma|\big\|_p
&<& \infty. 
\eeas
\end{lemma*}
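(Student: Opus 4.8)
The plan is to compute $\Gamma_n(\theta^*)-\Gamma$ explicitly, split it into a martingale part and a ``drift'' part, and bound each separately in $L^p$. Since $\bbH_n=\ell_n$ and $\Gamma_n(\theta)=-n^{-1}\partial_\theta^2\ell_n(\theta)$, I would evaluate the three-term expansion (\ref{260716-1}) of $n^{-1}\partial_\theta^2\ell_n$ at $\theta=\theta^*$; there the third integral vanishes because $\lambda^{n,\alpha}(t,\theta^*)-\lambda^{n,\alpha}(t,\theta^*)=0$, and the identity $\partial_\theta(\partial_\theta\lambda^{n,\alpha}/\lambda^{n,\alpha})\,\lambda^{n,\alpha}-\partial_\theta^2\lambda^{n,\alpha}=-(\lambda^{n,\alpha})^{-1}(\partial_\theta\lambda^{n,\alpha})^{\otimes2}$ at $\theta^*$ gives
\beas
\Gamma_n(\theta^*)-\Gamma
&=&
-\sum_{\alpha\in\cali}\int_{T_0}^{T_1}\partial_\theta\big(\partial_\theta\lambda^{n,\alpha}/\lambda^{n,\alpha}\big)(t,\theta^*)\,n^{-1}d\tilde{N}^{n,\alpha}_t
\\&&
+\sum_{\alpha\in\cali}\int_{T_0}^{T_1}\Big[(\partial_\theta\lambda^{n,\alpha})^{\otimes2}(\lambda^{n,\alpha})^{-1}(t,\theta^*)-(\partial_\theta\lambda^{\infty,\alpha})^{\otimes2}(\lambda^{\infty,\alpha})^{-1}(t,\theta^*)\Big]dt
\\&=:&
-A_n+B_n,
\eeas
so it suffices to show $\sup_n\|n^\varepsilon A_n\|_p<\infty$ and $\sup_n\|n^\varepsilon B_n\|_p<\infty$ for each $p>1$.

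For the martingale part $A_n$, I would write $n^{-1}d\tilde{N}^{n,\alpha}=n^{-1/2}\cdot n^{-1/2}d\tilde{N}^{n,\alpha}$ and estimate $\int_{T_0}^{\cdot}\partial_\theta(\partial_\theta\lambda^{n,\alpha}/\lambda^{n,\alpha})(t,\theta^*)\,n^{-1/2}d\tilde{N}^{n,\alpha}_t$ exactly as in the proofs of Lemmas \ref{260803-10} and \ref{260617-1}: its predictable quadratic variation equals $n^{-1}\int_{T_0}^{\cdot}\big(\partial_\theta(\partial_\theta\lambda^{n,\alpha}/\lambda^{n,\alpha})(t,\theta^*)\big)^2n\lambda^{n,\alpha}(t,\theta^*)dt$, so iterating the Burkholder--Davis--Gundy inequality reduces the moment bound to uniform-in-$(n,t)$ $L^q$ bounds (for arbitrary $q$) of $\partial_\theta^j\lambda^{n,\alpha}(t,\theta^*)$ with $j\le2$ (from $[B1]_2$, $[B2]_2$, $[B3]$), of $\lambda^{n,\alpha}(t,\theta^*)$ itself, and of $\lambda^{n,\alpha}(t,\theta^*)^{-1}1_{\{\lambda^{n,\alpha}(t,\theta^*)\ne0\}}$ (from $[B4]$). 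This yields $\sup_n\big\|\int_{T_0}^{T_1}\partial_\theta(\partial_\theta\lambda^{n,\alpha}/\lambda^{n,\alpha})(t,\theta^*)n^{-1/2}d\tilde{N}^{n,\alpha}_t\big\|_p<\infty$, hence $\|n^\varepsilon A_n\|_p=O(n^{\varepsilon-1/2})$, which is bounded because $\varepsilon<1/2$.

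For the drift part $B_n$, write $\delta^{n,\alpha}_j(t):=\partial_\theta^j\lambda^{n,\alpha}(t,\theta^*)-\partial_\theta^j\lambda^{\infty,\alpha}(t,\theta^*)$ for $j=0,1$, and telescope the integrand difference at $(t,\theta^*)$:
\beas
&&(\partial_\theta\lambda^{n,\alpha})^{\otimes2}(\lambda^{n,\alpha})^{-1}(t,\theta^*)-(\partial_\theta\lambda^{\infty,\alpha})^{\otimes2}(\lambda^{\infty,\alpha})^{-1}(t,\theta^*)
\\&=&
\big(\delta^{n,\alpha}_1\otimes\partial_\theta\lambda^{n,\alpha}(t,\theta^*)+\partial_\theta\lambda^{\infty,\alpha}(t,\theta^*)\otimes\delta^{n,\alpha}_1\big)\lambda^{n,\alpha}(t,\theta^*)^{-1}
\\&&
-(\partial_\theta\lambda^{\infty,\alpha})^{\otimes2}(t,\theta^*)\,\delta^{n,\alpha}_0\,\big(\lambda^{n,\alpha}(t,\theta^*)\lambda^{\infty,\alpha}(t,\theta^*)\big)^{-1}.
\eeas
Every summand is the product of a ``difference factor'' $\delta^{n,\alpha}_j$ with factors having $L^q$ norms bounded uniformly in $(n,t)$ (the $\lambda$-derivatives by $[B1]_2,[B2]_2$; the inverses $(\lambda^{n,\alpha})^{-1},(\lambda^{\infty,\alpha})^{-1}$ by $[B4]$, the $n=\infty$ case of which follows from the $j=0$ estimate below and Fatou's lemma). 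By H\"older's and Minkowski's integral inequalities it therefore suffices to prove that $n^\varepsilon\sup_{t\in I}\|\delta^{n,\alpha}_j(t)\|_q\to0$ for $j=0,1$ and every $q>1$. Splitting $\delta^{n,\alpha}_j(t)=(\partial_\theta^jg^{n,\alpha}-\partial_\theta^jg^{\infty,\alpha})(t,\theta^*)+\sum_\beta\big(\int_{\hat{T}_0}^{t-}(\partial_\theta^jK^{n,\alpha}_\beta-\partial_\theta^jK^{\infty,\alpha}_\beta)(t,s,\theta^*)dX^{n,\beta}_s+\int_{\hat{T}_0}^{t-}\partial_\theta^jK^{\infty,\alpha}_\beta(t,s,\theta^*)d(X^{n,\beta}_s-X^{\infty,\beta}_s)\big)$, the $g$-term is controlled by $[B2]_2$(iii); the first integral is bounded by $\sup_{s<t}\big|(\partial_\theta^jK^{n,\alpha}_\beta-\partial_\theta^jK^{\infty,\alpha}_\beta)(t,s,\theta^*)\big|\cdot X^{n,\beta}_{T_1}$ and handled via $[B1]_2$(iv) and $[B3]$; and the last integral I would rewrite by integration by parts in $s$ (using the $s$-differentiability of $K^\infty$) as the boundary terms $\partial_\theta^jK^{\infty,\alpha}_\beta(t,\cdot,\theta^*)(X^{n,\beta}-X^{\infty,\beta})$ evaluated at $s=t-$ and $s=\hat{T}_0$, minus $\int_{\hat{T}_0}^{t-}\partial_s\partial_\theta^jK^{\infty,\alpha}_\beta(t,s,\theta^*)(X^{n,\beta}_s-X^{\infty,\beta}_s)ds$; by the uniform $L^q$ bounds on $\partial_\theta^jK^\infty$ and $\partial_s\partial_\theta^jK^\infty$ ($[B1]_2$(ii),(iii) at $n=\infty$) together with $n^\varepsilon\sup_{t\in\hat{I}}\|X^{n,\beta}_t-X^{\infty,\beta}_t\|_q\to0$ ($[B3]$), this tends to $0$ at the required rate (for the integral term one again invokes Minkowski's integral inequality). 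Collecting the pieces gives $\|n^\varepsilon B_n\|_p\to0$.

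I expect the main obstacle to be the drift part $B_n$, and within it the term $\int_{\hat{T}_0}^{t-}\partial_\theta^jK^{\infty,\alpha}_\beta(t,s,\theta^*)\,d(X^{n,\beta}_s-X^{\infty,\beta}_s)$: Condition $[B3]$ only furnishes pointwise-in-$t$ control of $X^{n,\beta}-X^{\infty,\beta}$ in $L^p$, not control of its integral against a general integrand, so one really has to integrate by parts in $s$ and exploit the $s$-differentiability of the limit kernel ($[B1]$(iii)) in order to transfer $d(X^{n,\beta}-X^{\infty,\beta})$ onto $\partial_sK^{\infty,\alpha}_\beta$. A secondary delicate point is handling the negative powers $(\lambda^{n,\alpha})^{-1}$ and $(\lambda^{\infty,\alpha})^{-1}$ simultaneously through $[B4]$ and its limiting form; everything else is a routine combination of the Burkholder--Davis--Gundy and H\"older inequalities.
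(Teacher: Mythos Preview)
Your approach coincides with the paper's: both split $\Gamma_n(\theta^*)-\Gamma$ via the expansion (\ref{260716-1}) into the martingale piece $A_n$ (handled by iterated Burkholder--Davis--Gundy as in (\ref{260809-2})) and the drift piece $B_n$ (handled by an integration-by-parts representation of $\partial_\theta^j\lambda^n-\partial_\theta^j\lambda^\infty$ at $\theta^*$).

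There is one small misalignment. In treating $\int\partial_\theta^jK^n\,dX^n-\int\partial_\theta^jK^\infty\,dX^\infty$ you wrote it as $\int(\partial_\theta^jK^n-\partial_\theta^jK^\infty)\,dX^n+\int\partial_\theta^jK^\infty\,d(X^n-X^\infty)$ and integrated the second term by parts, citing ``$[B1]$(iii) at $n=\infty$'' for the $s$-differentiability of $\partial_\theta^jK^\infty$. But $[B1]$(iii) is stated only for $(n,t)\in\bbN\times I$, so $s$-differentiability and the $L^p$ bound on $\partial_s\partial_\theta^jK^\infty$ are not directly available. The paper's representation (\ref{260731-1}) instead splits the other way,
\[
\int(\partial_\theta^jK^n-\partial_\theta^jK^\infty)\,dX^\infty+\int\partial_\theta^jK^n\,d(X^n-X^\infty),
\]
and integrates the second term by parts using the $s$-differentiability of $\partial_\theta^jK^n(t,s,\theta^*)$, which is precisely what $[B1]$(iii) supplies. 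Switching to this decomposition fixes your argument immediately; everything else you wrote (the BDG iteration for $A_n$, the telescoping of the integrand in $B_n$, the H\"older/Minkowski bookkeeping, and the Fatou argument for $(\lambda^{\infty,\alpha})^{-1}$) goes through.
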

\proof 
We have the representation
\bea\label{260731-1}
\partial_\theta^j\lambda^n(t,\theta) -\partial_\theta^j\lambda^\infty(t,\theta)
\nn&=& 
\partial_\theta^jg^n(t,\theta)-\partial_\theta^jg^\infty(t,\theta)
\nn\\&&
+
\int_{\hat{T}_0}^{t-} (\partial_\theta^jK^n(t,s,\theta)-\partial_\theta^jK^\infty(t,s,\theta))dX^\infty_s 
\nn\\&&
+\partial_\theta^jK^n(t,t-,\theta)\big(X^n_{t-}-X^\infty_{t-}\big)
-\partial_\theta^jK^n(t,\hat{T}_0,\theta)\big(X^n_{\hat{T}_0}-X^\infty_{\hat{T}_0}\big)
\nn\\&&
-\int_{\hat{T}_0}^{t-} (\partial_s\partial_\theta^jK^n)(t,s,\theta^*)\big(X^n_s- X^\infty_s\big)ds
\eea
for $j=0,1$ and $\theta\in\Theta$. 
{\colorb Then it is possible to obtain the desired estimate 
by using (\ref{260716-1}), 
(\ref{260731-1}) evaluated at $\theta=\theta^*$ and an estimate similar to (\ref{260809-2}). 
}
\qed\halflineskip

{\colorb Obviously we have }
\begin{lemma*}\label{260730-5}
For every $p>1$, 
$
\sup_{n\in\bbN} \big\|{\colorb \Delta_n}\big\|_p
< \infty. 
$
\end{lemma*}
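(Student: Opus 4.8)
The plan is to observe that $\Delta_n=n^{-1/2}\partial_\theta\ell_n(\theta^*)$ by (\ref{260716-3}), so that $\Delta_n$ is the terminal value of the locally square-integrable martingale
\[
M^{n,\alpha}_t=\int_{T_0}^t\zeta^{n,\alpha}(s)\,n^{-1/2}\,d\tilde{N}^{n,\alpha}_s,
\qquad
\zeta^{n,\alpha}(t)=1_{\{\lambda^{n,\alpha}(t,\theta^*)\neq0\}}\,\lambda^{n,\alpha}(t,\theta^*)^{-1}\,\partial_\theta\lambda^{n,\alpha}(t,\theta^*),
\]
in the sense that $\Delta_n=\sum_{\alpha\in\cali}M^{n,\alpha}_{T_1}$ (the martingale property and square-integrability being supplied by Lemma~\ref{251123-1}~(c)). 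Since $\cali$ is finite, it suffices to prove $\sup_{n\in\bbN}\big\|M^{n,\alpha}_{T_1}\big\|_p<\infty$ for each $\alpha\in\cali$ and every $p>1$, and this will be done by the same martingale-moment argument as in Lemma~\ref{260803-10}, now without any localization because Conditions $[B1]$--$[B4]$ provide honest $L^p$-bounds rather than mere weak local boundedness.

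First I would record the uniform bound $\sup_{(n,t)\in\bbN\times I}\big\|\zeta^{n,\alpha}(t)\big\|_p<\infty$ for every $p>1$. The factor $\big\|\lambda^{n,\alpha}(t,\theta^*)^{-1}1_{\{\lambda^{n,\alpha}(t,\theta^*)\neq0\}}\big\|_p$ is bounded uniformly by $[B4]$. For $\partial_\theta\lambda^{n,\alpha}(t,\theta^*)=\partial_\theta g^{n,\alpha}(t,\theta^*)+\int_{\hat{T}_0}^{t-}\partial_\theta K^{n,\alpha}(t,s,\theta^*)\,dX^n_s$ I cannot pull a supremum out of the $dX^n$-integral, so I would use the integration-by-parts representation of the type (\ref{260731-1}), written for $\partial_\theta\lambda^{n,\alpha}(t,\theta^*)$ against $X^n$ itself:
\begin{align*}
\partial_\theta\lambda^{n,\alpha}(t,\theta^*)=\partial_\theta g^{n,\alpha}(t,\theta^*)&+\partial_\theta K^{n,\alpha}(t,t-,\theta^*)\,X^n_{t-}-\partial_\theta K^{n,\alpha}(t,\hat{T}_0,\theta^*)\,X^n_{\hat{T}_0}\\
&-\int_{\hat{T}_0}^{t-}\big(\partial_s\partial_\theta K^{n,\alpha}\big)(t,s,\theta^*)\,X^n_s\,ds,
\end{align*}
and bound each term in $L^p$ by Hölder's inequality together with the pointwise moment estimates of $[B2]$ (for $\partial_\theta g^n$), $[B1]$ (for $\partial_\theta K^n$ at $s=t-$ and $s=\hat{T}_0$, and for $\int_{\hat{T}_0}^t\|\partial_s\partial_\theta K^n(t,s,\theta^*)\|_p\,ds$), and $[B3]$ (for $X^n$, using $\sup_{(n,s)}\|X^n_s\|_p<\infty$ with Minkowski's integral inequality on the last term). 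Combining this with the $[B4]$-bound via Hölder gives the claimed bound for $\zeta^{n,\alpha}$, and the same representation gives $\sup_{(n,t)}\|\lambda^{n,\alpha}(t,\theta^*)\|_p<\infty$.

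Next I would bound the moments of $M^{n,\alpha}_{T_1}$ exactly as in the passage (\ref{260809-1})--(\ref{260809-2}). By the Burkholder--Davis--Gundy inequality, for $k\geq1$,
\[
E\big[|M^{n,\alpha}_{T_1}|^{2k}\big]\leq C_k\,E\Big[\Big(n^{-1}\int_{T_0}^{T_1}|\zeta^{n,\alpha}(t)|^{2}\,dN^{n,\alpha}_t\Big)^{\!k}\Big],
\]
and, splitting $dN^{n,\alpha}=d\tilde{N}^{n,\alpha}+n\lambda^{n,\alpha}(\cdot,\theta^*)\,dt$,
\[
n^{-1}\int_{T_0}^{T_1}|\zeta^{n,\alpha}(t)|^{2}\,dN^{n,\alpha}_t
=n^{-1/2}\Big(n^{-1/2}\int_{T_0}^{T_1}|\zeta^{n,\alpha}(t)|^{2}\,d\tilde{N}^{n,\alpha}_t\Big)
+\int_{T_0}^{T_1}|\zeta^{n,\alpha}(t)|^{2}\,\lambda^{n,\alpha}(t,\theta^*)\,dt.
\]
The last integral is $L^k$-bounded uniformly in $n$ by the bounds on $\zeta^{n,\alpha}$ and $\lambda^{n,\alpha}(\cdot,\theta^*)$ from the previous step, while the first term has the same form as $M^{n,\alpha}$ with integrand $|\zeta^{n,\alpha}|^{2}$, so one reapplies the BDG estimate. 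Iterating this from an exponent $k=2^m$, $m\in\bbN$, and using at the last step the identity $E[(\int\cdots\,n^{-1/2}d\tilde{N}^{n,\alpha})^2]=E[\int\cdots^{2}\,\lambda^{n,\alpha}(\cdot,\theta^*)\,dt]$, one gets $\sup_{n\in\bbN}E[|M^{n,\alpha}_{T_1}|^{p}]<\infty$ for every $p>1$; summing over $\alpha\in\cali$ completes the proof. I do not expect a genuine obstacle here: the only delicate point, the bootstrapping of the BDG inequality to absorb the $n^{-1/2}$ normalization against a point process, is already carried out in Lemma~\ref{260803-10}, and the sole new ingredient is the uniform $L^p$-estimate for $\zeta^{n,\alpha}$, which follows from $[B4]$ and the integration-by-parts bound for $\partial_\theta\lambda^{n,\alpha}$; this is why the statement is prefaced by ``Obviously''.
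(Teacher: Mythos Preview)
Your proposal is correct and follows exactly the route the paper intends: the paper itself offers no argument beyond ``Obviously we have'', precisely because the estimate is obtained by combining the integration-by-parts representation (\ref{260731-1}) for $\partial_\theta\lambda^{n,\alpha}(t,\theta^*)$ (already used in Lemma~\ref{260730-4}) with the BDG iteration scheme (\ref{260809-1})--(\ref{260809-2}) (already used in Lemma~\ref{260803-10}), now invoking the genuine $L^p$-bounds of $[B1]$--$[B4]$ in place of localization. You have correctly identified both ingredients and the reason why the paper regards the lemma as immediate.
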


\begin{lemma*}\label{260730-6}
For every $p>1$, 
\beas 
\sup_{n\in\bbN} \bigg\|n^\varepsilon\sup_{\theta\in\Theta}\big|\bbY_n(\theta)-\bbY(\theta)\big|\bigg\|_p
&<& \infty. 
\eeas
\end{lemma*}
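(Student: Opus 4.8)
The plan is to decompose $\bbY_n(\theta)-\bbY(\theta)$ into its stochastic-integral part and its deterministic part, exactly as in the proof of Lemma \ref{251125-1}, and to bound each in $L^p$ --- this time using the $L^p$ bounds and the $n^\varepsilon$-rates of $[B1]$--$[B4]$ (with the $\theta$-differentiability of $[B1]_4$, $[B2]_4$) in place of the tightness and the stopping-time localization used there. Write
\[
\bbY_n(\theta)-\bbY(\theta)\;=\;M_n(\theta)\;-\;D_n(\theta),
\]
where
\[
M_n(\theta)=\sum_{\alpha=1}^\sfd\int_{T_0}^{T_1}\xi^{n,\alpha}(t,\theta)\,n^{-1}d\tilde N^{n,\alpha}_t,\qquad
\xi^{n,\alpha}(t,\theta)=1_{\{\lambda^{n,\alpha}(t,\theta^*)\neq0\}}\log\frac{\lambda^{n,\alpha}(t,\theta)}{\lambda^{n,\alpha}(t,\theta^*)},
\]
\[
D_n(\theta)=\sum_{\alpha=1}^\sfd\int_{T_0}^{T_1}\Bigl[\Phi\bigl(\lambda^{n,\alpha}(t,\theta),\lambda^{n,\alpha}(t,\theta^*)\bigr)-\Phi\bigl(\lambda^{\infty,\alpha}(t,\theta),\lambda^{\infty,\alpha}(t,\theta^*)\bigr)\Bigr]dt ,
\]
with $\Phi(a,b)=a-b-b\log(a/b)$ and $\Phi(a,0):=a$ (the singular set causes no trouble, by $[B4]$).

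For $M_n$ I would rerun the iterated Burkholder--Davis--Gundy plus Sobolev argument from the proof of Lemma \ref{260803-10}, but carried out directly in $L^p$ and without the localizing stopping times $\tau^n_A$: under $[B1]$--$[B4]$ the quantities $\sup_{t,s,\theta}|\partial_\theta^jK^n|$, $\sup_{t,\theta}|\partial_\theta^jg^n|$ and $\sup_{t,\theta}\lambda^{n,\alpha}(t,\theta)^{-1}1_{\{\lambda^{n,\alpha}\neq0\}}$ are bounded in every $L^p$, so applying the BDG recursion to $\xi^{n,\alpha}$ and $\partial_\theta\xi^{n,\alpha}$ (using $\langle\tilde N^{n,\alpha}\rangle_t=n\int_{T_0}^t\lambda^{n,\alpha}(s,\theta^*)ds$ from Lemma \ref{251123-1}) and then Sobolev's inequality on $\Theta$ yields $\sup_n\bigl\|\sup_{\theta\in\Theta}|\,\sum_\alpha\int_{T_0}^{T_1}\xi^{n,\alpha}(t,\theta)n^{-1/2}d\tilde N^{n,\alpha}_t\,|\bigr\|_p<\infty$ for every $p>1$. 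Since $M_n(\theta)=n^{-1/2}\bigl(\sum_\alpha\int\xi^{n,\alpha}n^{-1/2}d\tilde N^{n,\alpha}\bigr)$ and $\varepsilon<1/2$, this gives $\|n^\varepsilon\sup_\theta|M_n|\|_p\le C_p\,n^{\varepsilon-1/2}\to0$, so the martingale part is uniformly bounded in $L^p$.

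For $D_n$ I would first record the elementary bound $|\Phi(a,b)-\Phi(a',b')|\le C\bigl(|a-a'|+|b-b'|\bigr)$, valid with a constant $C$ depending only on upper bounds of $a,a',b,b'$ and lower bounds of the nonzero ones --- this is the pathwise estimate already used in the proof of Lemma \ref{251125-1}, which here gives
\[
\sup_{\theta\in\Theta}|D_n(\theta)|\;\le\; C_n\sum_{\alpha=1}^\sfd\int_{T_0}^{T_1}\Bigl(\sup_{\theta\in\Theta}\bigl|\lambda^{n,\alpha}(t,\theta)-\lambda^{\infty,\alpha}(t,\theta)\bigr|+\bigl|\lambda^{n,\alpha}(t,\theta^*)-\lambda^{\infty,\alpha}(t,\theta^*)\bigr|\Bigr)dt ,
\]
with $C_n$ built from $1+|X^n_{T_1}-X^n_{\hat{T}_0}|+|X^\infty_{T_1}-X^\infty_{\hat{T}_0}|$ and $\sup_{t,\theta}\lambda^{n,\alpha}(t,\theta^*)^{-1}1_{\{\lambda^{n,\alpha}\neq0\}}$, hence $\sup_n\|C_n\|_p<\infty$ for all $p$ by $[B1]$--$[B4]$. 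It then remains to show
\[
\sup_{n\in\bbN}n^\varepsilon\sup_{t\in I}\Bigl\|\sup_{\theta\in\Theta}\bigl|\partial_\theta^j\lambda^{n,\alpha}(t,\theta)-\partial_\theta^j\lambda^{\infty,\alpha}(t,\theta)\bigr|\Bigr\|_p<\infty\qquad(j=0,1),
\]
the case $j=1$ entering only to supply, through Sobolev's inequality in $\theta$, the $\sup_\theta$ inside the norm for $j=0$. This last bound I would obtain from the representation (\ref{260731-1}) used in the proof of Lemma \ref{260730-4} (and its analogue at a general $\theta$): the $g$-difference is estimated by $[B2]$(iii); the term $\int(\partial_\theta^jK^n-\partial_\theta^jK^\infty)dX^\infty$ by $[B1]$(iv) together with the $L^p$-boundedness of $X^\infty$, itself a consequence of $[B3]$; the two boundary terms by H\"older with $[B1]$(ii) and $[B3]$; and the term $\int\partial_s\partial_\theta^jK^n(t,s,\theta^*)(X^n_s-X^\infty_s)ds$ by Minkowski's integral inequality with $[B1]$(iii) and $[B3]$. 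In each piece the factor carrying the $n^\varepsilon$-gain is paired only with factors bounded in all $L^p$, so H\"older closes the estimate; combined with the bound on $\sup_\theta|D_n|$ and with the $M_n$-estimate this proves the lemma.

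The step I expect to be the main obstacle is the uniform-in-$\theta$ control: upgrading the pointwise-in-$\theta$ $L^p$ bounds of $[B1]$--$[B4]$ to bounds on $\|\sup_\theta|\cdot|\|_p$ through Sobolev's embedding on $\Theta$ while keeping the $n^\varepsilon$-rates intact, together with the bookkeeping around the zero set of $\lambda^{n,\alpha}$ (and $\lambda^{\infty,\alpha}$) needed to ensure that the random constant $C_n$ --- and likewise the log-ratios hidden in $\xi^{n,\alpha}$ --- genuinely have finite moments of every order. By contrast, the martingale part $M_n$ is a routine rerun of Lemma \ref{260803-10} with moment bounds substituted for tightness.
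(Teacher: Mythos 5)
Your proposal follows the paper's own (very terse) proof exactly: split $\bbY_n-\bbY$ into the martingale piece, controlled via the iterated Burkholder--Davis--Gundy recursion and Sobolev embedding over $\Theta$ and then crushed by the extra factor $n^{-1/2}$ (so $n^\varepsilon$ is harmless since $\varepsilon<1/2$), and the deterministic piece, controlled through the pointwise Lipschitz bound on the Kullback-type integrand together with the decomposition (\ref{260731-1}), again uniformized in $\theta$ by Sobolev's inequality with the $n^\varepsilon$-rates of $[B1]$(iv), $[B2]$(iii), $[B3]$. The decomposition and every tool you invoke are exactly what the paper's one-line proof directs, so the proposal is correct and takes essentially the same route.
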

\proof 
Use (\ref{260731-1}) and Sobolev's inequality, as well as the Burkholder-Davis-Gundy inequality 
for uniform estimate of the martingale part. 
\qed\halflineskip

\begin{proposition*}\label{260730-2}
{\rm (Polynomial type large deviation inequality)} 
Suppose that Conditions $[B1]_4$, $[B2]_4$, $[B3]$, $[B4]$ {\colorr and $[B5]$} are fulfilled. 
Then, for every $L>0$, there exists a constant $C_L$ such that 
\beas 
P\bigg[\sup_{u\in\bbV_n(r)}\bbZ_n(u)\geq e^{-r}\bigg] 
&\leq& 
\frac{C_L}{r^L}
\eeas
for all $r>0$ and all $n\in\bbN$, where 
$\bbV_n(r)=\{u\in\bbU_n;\>|u|\geq r\}$. 
\end{proposition*}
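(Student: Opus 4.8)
The plan is to deduce Proposition \ref{260730-2} from the general polynomial type large deviation inequality of the quasi likelihood analysis of \cite{Yoshida2011}, applied to the statistical random field $\bbH_n=\ell_n$ with scaling $n^{-1/2}$, limiting random bilinear form $\Gamma$ of (\ref{270614-1}) and limiting contrast $\bbY$ of (\ref{260810-11}). Since $\bbY_n(\theta)=n^{-1}(\bbH_n(\theta)-\bbH_n(\theta^*))$, on $\bbU_n$ one has $\log\bbZ_n(u)=\bbH_n(\theta_u)-\bbH_n(\theta^*)=n\,\bbY_n(\theta_u)$, while the representation (\ref{lamn}) writes $\log\bbZ_n(u)=\Delta_n[u]-\half\Gamma[u^{\otimes2}]+r_n(u)$. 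The hypotheses of the abstract theorem reduce, in the present setting, to exactly the estimates already collected above: the $L^p$-boundedness of $\Delta_n$ (Lemma \ref{260730-5}); the polynomial-rate bound $\sup_n\|n^\varepsilon(\Gamma_n(\theta^*)-\Gamma)\|_p<\infty$ (Lemma \ref{260730-4}); the $L^p$-boundedness of $n^{-1}\sup_{\theta\in\Theta}|\partial_\theta^3\bbH_n(\theta)|$, which controls the third-order Taylor remainder uniformly in $\theta$ (Lemma \ref{260730-3}); the polynomial-rate bound $\sup_n\|n^\varepsilon\sup_{\theta\in\Theta}|\bbY_n(\theta)-\bbY(\theta)|\|_p<\infty$ (Lemma \ref{260730-6}); and the nondegeneracy of the key index, i.e.\ condition $[B5]$. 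Conditions $[B1]_4$, $[B2]_4$, $[B3]$ and $[B4]$ enter only through these four lemmas, supplying the $L^p$-boundedness and the $n^{-\varepsilon}$-rates of $\partial_\theta^jg^n$, $\partial_\theta^jK^n$, $X^n$ and $\lambda^{n,\alpha}(\cdot,\cdot)^{-1}1_{\{\lambda^{n,\alpha}(\cdot,\cdot)\neq0\}}$.

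To see the mechanism, I would split the deviation set $\bbV_n(r)=\{|u|\geq r\}$ into a near zone $\{r\leq|u|\leq\rho\sqrt n\}$, with $\rho>0$ small (and possibly depending on $r$), and a far zone $\{|u|>\rho\sqrt n\}$. On the near zone, a second-order Taylor expansion of $\theta\mapsto\bbH_n(\theta)$ at $\theta^*$ gives $\log\bbZ_n(u)=\Delta_n[u]-\half\Gamma_n(\theta^*)[u^{\otimes2}]+R_n(u)$ with $|R_n(u)|\leq Cn^{-1/2}|u|^3(n^{-1}\sup_\theta|\partial_\theta^3\bbH_n|)$, controlled by Lemma \ref{260730-3}, while replacing $\Gamma_n(\theta^*)$ by $\Gamma$ costs $\half|u|^2|\Gamma_n(\theta^*)-\Gamma|$, controlled by Lemma \ref{260730-4}. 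A Taylor expansion of $\theta\mapsto\bbY(\theta)$ shows $-\bbY(\theta)=\half\Gamma[(\theta-\theta^*)^{\otimes2}]+o(|\theta-\theta^*|^2)$ as $\theta\to\theta^*$, hence $\chi_0\leq\half\inf_{|v|=1}\Gamma[v^{\otimes2}]$, so $[B5]$ forces $\Gamma$ to be a.s.\ positive definite with polynomial-type control of $1/\det\Gamma$; combining this with the moment bound on $\Delta_n$ and a Chebyshev-type estimate yields $P[\sup_{r\leq|u|\leq\rho\sqrt n}\bbZ_n(u)\geq e^{-r}]\leq C_Lr^{-L}$. On the far zone, $\log\bbZ_n(u)=n\bbY_n(\theta_u)\leq n\bbY(\theta_u)+n\sup_\theta|\bbY_n(\theta)-\bbY(\theta)|\leq-\chi_0|u|^2+n\sup_\theta|\bbY_n-\bbY|$; since $|u|^2>\rho^2n$ there, the term $-\chi_0|u|^2$ dominates, with polynomial-type probability, the term $n\sup_\theta|\bbY_n-\bbY|$, which by Lemma \ref{260730-6} is of order $n^{1-\varepsilon}$ in every $L^p$; this is where the $n^{-\varepsilon}$ rate and $[B5]$ work together. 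Combining the two zones gives the inequality with a single constant $C_L$ valid for all $r>0$ and $n\in\bbN$.

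The main obstacle is not any individual estimate but the bookkeeping of the remainder $r_n(u)=r_n^{(1)}(u)+r_n^{(2)}(u)$: one must verify that, uniformly on the near zone, the gap between the genuine increment $\bbH_n(\theta_u)-\bbH_n(\theta^*)$ and the clean quadratic $\Delta_n[u]-\half\Gamma[u^{\otimes2}]$ is a product of a power of $|u|$ with random variables that are bounded in every $L^p$ with the correct power of $n$. This is exactly where the possible vanishing of the intensities is handled, through the indicators $1_{\{\lambda^{n,\alpha}\neq0\}}$ and condition $[B4]$, and where each of $r_n^{(1)}$, $r_n^{(21)}$, $r_n^{(22)}$ and $r_n^{(23)}$ is estimated by the Burkholder-Davis-Gundy and Sobolev inequalities, just as in the proofs of Lemmas \ref{260803-10} and \ref{260617-1}; in particular, the subtraction of the $\lambda^{\infty,\alpha}$-terms inside $r_n^{(23)}$, together with the $n^{-\varepsilon}$ rate, is what keeps the remainder of strictly lower order than the quadratic form $\half\Gamma[u^{\otimes2}]$.
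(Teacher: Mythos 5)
Your proposal is correct and follows essentially the same route as the paper: it reduces the statement to the abstract polynomial type large deviation theorem (Theorem~2) of \cite{Yoshida2011}, verifying its hypotheses [A1$''$], [A2], [A3], [A4$'$], [A5], [A6] through Lemmas \ref{260730-3}--\ref{260730-6} and condition $[B5]$, exactly as the paper does (the paper merely adds the explicit exponent choices $\mbox{``}\rho\mbox{''}=2$, $\mbox{``}\beta_1\mbox{''}=\varepsilon$, $\mbox{``}\beta_2\mbox{''}=\tfrac12-\varepsilon$, etc.). Your near-zone/far-zone sketch of the mechanism inside the abstract theorem is a reasonable exposition but is not reproduced in the paper, which treats Yoshida's Theorem~2 as a black box.
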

\proof 
We will follow the procedure in \cite{Yoshida2011}. 
The parameters there, here in quotes, will be set as follows: 
%\koko Take $\beta_2$ very near $\half$. 
let $\mbox{``}\rho\mbox{''} = 2$, 
$\mbox{``}\beta_1\mbox{''}=\varepsilon$ and 
$\mbox{``}\beta_2\mbox{''}=\half-\varepsilon$, next choose  
%$\mbox{``}\beta_1\mbox{''}\in(0,\varepsilon]\subset(0,1/2)$ and 
$\mbox{``}\rho_2\mbox{''}\in(0,2\varepsilon)$, 
take $\mbox{``}\alpha\mbox{''}\in(0,\rho_2/2)$, 
finally take $\mbox{``}\rho_1\mbox{''}\in(0,\min\{1,\alpha/(1-\alpha),2\varepsilon/(1-\alpha)\}$. 
Then Condition [A4$'$] of \cite{Yoshida2011} is satisfied. 
\begin{en-text}
\beas 
\mbox{``}\rho\mbox{''} = 2,\quad
\mbox{``}\beta_1\mbox{''}=\frac{1}{4},\quad
\mbox{``}\beta_2\mbox{''}=0,\quad
\mbox{``}\rho_2\mbox{''}=\frac{3}{4},\quad
\mbox{``}\alpha\mbox{''}=\frac{1}{4},\quad
\mbox{``}\beta\mbox{''}=\frac{1}{3},\quad
\mbox{``}\rho_1\mbox{''}=\frac{1}{7}.
\eeas
\end{en-text}
Lemmas \ref{260730-3} and \ref{260730-4} ensure [A1$''$] of \cite{Yoshida2011}. 
Condition {\colorr [B5]} implies Conditions [A2] and [A5] of \cite{Yoshida2011}. 
Condition [A3] of \cite{Yoshida2011} obviously holds. 
Condition [A6] of \cite{Yoshida2011} is checked by Lemmas \ref{260730-5} and \ref{260730-6}. 
Apply Theorem 2 of \cite{Yoshida2011} to obtain the result. 
\qed\halflineskip

\subsection{Limit distribution and  moment convergence of QMLE and QBE}

A quasi maximum likelihood estimator is any estimator that satisfies 
\beas 
\bbH_n(\hat{\theta}_n) &=& \max_{\theta\in\bar{\Theta}}\bbH_n(\theta). 
\eeas
The quasi Bayesian estimator is defined by 
\beas 
\tilde{\theta}_n &=& 
\bigg[\int_{\Theta}\exp(\bbH_n(\theta))\>\varpi(\theta)d\theta\bigg]^{-1}
\int_{\Theta}\theta\exp(\bbH_n(\theta))\>\varpi(\theta)d\theta
\eeas
for a prior density $\varpi$ on $\Theta$. We will assume that 
$\varphi$ is continuous and 
$0<\inf_{\theta\in\Theta}\varpi(\theta)\leq\sup_{\theta\in\Theta}\varpi(\theta)<\infty$. 

Let 
\beas 
\bbZ(u) &=& \exp\bigg(\Delta[u]-\half\Gamma\>[u^{\otimes2}]\bigg)
\eeas
for $u\in\bbR^\sfp$, 
where $\Delta=\Gamma^{1/2}\>\zeta$. 

Denote by $C_\up(\bbR^\sfp)$ the set of continuous functions $f:\bbR^\sfp\to\bbR$ 
{\colorb of} at most polynomial growth. 
\begin{theorem*}\label{260731-2} 
Suppose that Conditions $[B1]_4$, $[B2]_4$, $[B3]$, $[B4]$ {\colorr and $[B5]$} are fulfilled. 
Then 
\bi
\im[$(a)$] 
$\displaystyle\sqrt{n}(\hat{\theta}_n-\theta^*)\to^{d_s}\Gamma^{-1/2}\zeta$ as $n\to\infty$. 
\im[$(b)$] 
$\displaystyle 
E\big[f\big(\sqrt{n}(\hat{\theta}_n-\theta^*)\big)\big]\to 
\bbE\big[f\big(\Gamma^{-1/2}\zeta\big)\big]$ as $n\to\infty$ 
for all $f\in C_\up(R^\sfp)$. 
\ei
\end{theorem*}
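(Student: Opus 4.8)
The plan is to derive both assertions from the general quasi likelihood analysis of \cite{Yoshida2011}, applied to the statistical random field $\bbH_n=\ell_n$, using the polynomial type large deviation inequality of Proposition \ref{260730-2} as the crucial a priori estimate. Recall from (\ref{lamn}) the expansion, valid on $\bbU_n$,
\beas
\bbZ_n(u) &=& \exp\bigl(\Delta_n[u]-\half\Gamma[u^{\otimes2}]+r_n(u)\bigr),
\eeas
with $r_n=r^{(1)}_n+r^{(2)}_n$. Thus it will suffice to supply the three ingredients on which the abstract theory rests: (i) a polynomial type large deviation bound for $\bbZ_n$; (ii) the local convergence of the normalized random field, namely $\Delta_n\to^{d_s}\Delta=\Gamma^{1/2}\zeta$, $\Gamma_n(\theta^*)\to^p\Gamma$, and $\sup_{|u|\leq R}|r_n(u)|\to^p0$ for every $R>0$, together with the $L^p$-bounds needed for moment control; and (iii) the almost sure nondegeneracy of $\Gamma$. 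Ingredient (i) is exactly Proposition \ref{260730-2}, so the work concentrates on (ii) and (iii).

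For (ii), I would first note that, viewed as a linear functional, $\Delta_n$ coincides with $n^{-1/2}\partial_\theta\ell_n(\theta^*)$, so Lemma \ref{260716-2} already gives the $\calf$-stable convergence $\Delta_n\to^{d_s}\Gamma^{1/2}\zeta$. Since $\Gamma$ is $\calf$-measurable and $\zeta$ is independent of $\calf$, this combines with $\Gamma_n(\theta^*)\to^p\Gamma$ from Lemma \ref{260730-4} (in fact at rate $n^\varepsilon$) into the joint convergence required, and Lemma \ref{260730-5} supplies the $L^p$-boundedness of $\{\Delta_n\}$. To dispose of $r_n$, I would Taylor-expand the integrands defining $r^{(1)}_n,r^{(21)}_n,r^{(22)}_n$ around $u=0$: since $\theta_u-\theta^*=n^{-1/2}u$, each remainder carries a factor $n^{-1/2}$, the higher $\theta$-derivatives of $g^n$, $K^n$ and hence of $\lambda^n$ being controlled in $L^p$ by $[B1]_4$, $[B2]_4$ and Lemma \ref{260730-3}; the martingale term in $r^{(1)}_n$ would be handled by the Burkholder--Davis--Gundy inequality followed by Sobolev's inequality on $\Theta$, just as in the proof of Lemma \ref{260803-10}. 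The term $r^{(23)}_n$, which compares the finite-$n$ Fisher information density with its limit, is $O_p(n^{-\varepsilon})$ on each ball $\{|u|\leq R\}$ by the very estimates underlying Lemmas \ref{260730-4} and \ref{260730-6}. Assembling these pieces gives $\sup_{|u|\leq R}|r_n(u)|\to^p0$ and, combined with Lemmas \ref{260730-3}, \ref{260730-5}, \ref{260730-6}, the $L^p$-versions of the conditions on the normalized random field demanded in \cite{Yoshida2011}.

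For (iii), Condition [B5] forces $\chi_0>0$ a.s. Differentiating $\bbY$ of (\ref{260810-11}) twice in $\theta$ under the integral sign shows that $\bbY(\theta^*)=0$, $\partial_\theta\bbY(\theta^*)=0$ and $\partial_\theta^2\bbY(\theta^*)=-\Gamma$ (cf.\ (\ref{270614-1})), so that $-\bbY(\theta)=\half\Gamma[(\theta-\theta^*)^{\otimes2}]+o(|\theta-\theta^*|^2)$; letting $\theta\to\theta^*$ along an arbitrary direction in the bound $-\bbY(\theta)\geq\chi_0|\theta-\theta^*|^2$ from the definition of $\chi_0$ forces the smallest eigenvalue of $\Gamma$ to be at least $2\chi_0>0$. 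Hence $\Gamma$ is a.s.\ invertible, the Gaussian limit field $\bbZ(u)=\exp(\Delta[u]-\half\Gamma[u^{\otimes2}])$ has the a.s.\ unique maximizer $\Gamma^{-1}\Delta=\Gamma^{-1/2}\zeta$, and the polynomial decay in [B5] yields $E[|\Gamma^{-1}|^p]<\infty$ for all $p>1$, which bounds the moments of this maximizer and, together with the large deviation estimate, prevents escape of mass.

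Finally, with (i)--(iii) in hand I would invoke Theorem 3 of \cite{Yoshida2011}: it gives that the maximizer $\hat u_n=\sqrt{n}(\hat\theta_n-\theta^*)$ of $u\mapsto\bbZ_n(u)$ converges $\calf$-stably to $\Gamma^{-1/2}\zeta$, which is (a), and that $\{f(\hat u_n)\}_{n\in\bbN}$ is uniformly integrable for every $f\in C_\up(\bbR^\sfp)$, so that $E[f(\sqrt{n}(\hat\theta_n-\theta^*))]\to\bbE[f(\Gamma^{-1/2}\zeta)]$, which is (b). I expect the main obstacle to be step (ii): ensuring that $r_n$ vanishes locally uniformly in probability while retaining the $L^p$ control that the abstract moment-convergence hypotheses demand. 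Since Proposition \ref{260730-2} and Lemmas \ref{260730-3}--\ref{260730-6} already encapsulate the substantive estimates, however, the present theorem is in essence a transcription of the general QLA result.
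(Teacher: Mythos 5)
Your proposal is correct and follows essentially the same route as the paper: combine the polynomial type large deviation inequality of Proposition \ref{260730-2} with local convergence of the statistical random field $\bbZ_n$ (stable convergence of $\Delta_n$ via Lemma \ref{260716-2}, $\Gamma_n(\theta^*)\to^p\Gamma$, and control of the remainder $r_n$ on compacts via the $L^p$ bounds underlying Lemmas \ref{260730-3}--\ref{260730-6}), then invoke the abstract QLA machinery of \cite{Yoshida2011}. The only discrepancies are cosmetic: the paper establishes $C(K)$-tightness of $\{r_n|_K\}$ rather than the locally uniform convergence $r_n\to^p0$ you claim (both suffice, and both rest on the same moment estimates), and the paper cites Theorem~4 of \cite{Yoshida2011} for the QMLE rather than Theorem~3.
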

\halflineskip
\begin{theorem*}\label{260731-3} 
Suppose that Conditions $[B1]_4$, $[B2]_4$, $[B3]$, $[B4]$ {\colorr and $[B5]$} are fulfilled. 
Then 
\bi
\im[$(a)$] 
$\displaystyle\sqrt{n}(\tilde{\theta}_n-\theta^*)\to^{d_s}\Gamma^{-1/2}\zeta$ as $n\to\infty$. 
\im[$(b)$] 
$\displaystyle 
E\big[f\big(\sqrt{n}(\tilde{\theta}_n-\theta^*)\big)\big]\to 
\bbE\big[f\big(\Gamma^{-1/2}\zeta\big)\big]$ as $n\to\infty$ 
for all $f\in C_\up(R^\sfp)$. 
\ei
\end{theorem*}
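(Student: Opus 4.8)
The plan is to obtain Theorem \ref{260731-3} from the abstract quasi likelihood analysis of \cite{Yoshida2011} by the same route used for the QMLE in Theorem \ref{260731-2}; the only genuinely new step is to rewrite the QBE as a functional of the statistical random field $\bbZ_n$ and to identify the limit of that functional.

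First I would rescale. Putting $\theta_u=\theta^*+n^{-1/2}u$, the substitution $\theta\mapsto u$ in the definition of $\tilde{\theta}_n$ gives
\[
\sqrt{n}\bigl(\tilde{\theta}_n-\theta^*\bigr)
=\Bigl[\int_{\bbU_n}\bbZ_n(u)\,\varpi(\theta_u)\,du\Bigr]^{-1}\int_{\bbU_n}u\,\bbZ_n(u)\,\varpi(\theta_u)\,du ,
\]
so $\sqrt{n}(\tilde{\theta}_n-\theta^*)$ is the Bayes functional, against the continuous and bounded-away-from-zero weight $\varpi(\theta_\cdot)$, of the random field $\bbZ_n$ on $\bbU_n$. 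The representation (\ref{lamn}), $\bbZ_n(u)=\exp\bigl(\Delta_n[u]-\half\Gamma[u^{\otimes2}]+r_n(u)\bigr)$, together with Lemma \ref{260716-2} ($\Delta_n\to^{d_s}\Delta=\Gamma^{1/2}\zeta$), Lemmas \ref{260730-3}--\ref{260730-4} (control of $\partial_\theta^3\bbH_n$ and of $\Gamma_n(\theta^*)-\Gamma$), and Lemma \ref{260730-6} with the representation (\ref{260731-1}) (locally uniform negligibility of $r_n$ and of its first two derivatives after rescaling), supply the LAMN ingredients. Moreover [B5], via the definition of $\chi_0$ and the fact that $-\bbY$ vanishes at $\theta^*$ with Hessian $\Gamma$ there, forces $\lambda_{\min}(\Gamma)\geq 2\chi_0$, so $\Gamma$ is a.s. invertible and $|\Gamma^{-1}|$ has finite moments of every order; thus $\Gamma^{-1/2}\zeta$ is well defined with all moments finite.

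Next I would invoke the QLA machinery. Exactly as in the proof of Proposition \ref{260730-2}, set the parameters of \cite{Yoshida2011} to $\mbox{``}\rho\mbox{''}=2$, $\mbox{``}\beta_1\mbox{''}=\varepsilon$, $\mbox{``}\beta_2\mbox{''}=\half-\varepsilon$, with the remaining constants as listed there; Lemmas \ref{260730-3}--\ref{260730-6} verify Conditions [A1$''$], [A3] and [A6] of \cite{Yoshida2011}, while [B5] verifies [A2] and [A5]. Proposition \ref{260730-2} is then precisely the polynomial type large deviation inequality for $\bbZ_n$ required by the theory, and it applies verbatim here. The general convergence and moment-convergence theorem of \cite{Yoshida2011} for the quasi Bayesian estimator (the counterpart, in the same framework, of the QMLE result behind Theorem \ref{260731-2}) then yields both $\sqrt{n}(\tilde{\theta}_n-\theta^*)\to^{d_s}\hat{u}$ and $E\bigl[f\bigl(\sqrt{n}(\tilde{\theta}_n-\theta^*)\bigr)\bigr]\to\bbE[f(\hat{u})]$ for every $f\in C_\up(\bbR^\sfp)$, where $\hat{u}$ is the Bayes functional with flat prior of the limit field $\bbZ(u)=\exp(\Delta[u]-\half\Gamma[u^{\otimes2}])$.

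Finally I would identify $\hat{u}$. Completing the square, $\bbZ(u)=\exp\bigl(\half\Gamma^{-1}[\Delta^{\otimes2}]\bigr)\exp\bigl(-\half\Gamma[(u-\Gamma^{-1}\Delta)^{\otimes2}]\bigr)$, so $u\mapsto\bbZ(u)$ is, up to a constant independent of $u$, a Gaussian density centred at $\Gamma^{-1}\Delta$; hence $\hat{u}=\int u\,\bbZ(u)\,du/\int\bbZ(u)\,du=\Gamma^{-1}\Delta=\Gamma^{-1/2}\zeta$, which is the limit appearing in Theorem \ref{260731-2}. (The prior $\varpi$ washes out in the limit by its continuity and positivity combined with the concentration of $\bbZ_n$ on bounded $u$, as in the general theory.) I expect the main obstacle to be the verification, on balls $\{|u|\leq r\}$, of the polynomial-moment estimates on $r_n^{(1)},r_n^{(21)},r_n^{(22)},r_n^{(23)}$ demanded by [A1$''$]/[A4$'$] of \cite{Yoshida2011}; but these are Taylor-expansion estimates for the logarithm and for the quadratic terms in $\lambda^{n,\alpha}$, handled by the Burkholder--Davis--Gundy and Sobolev inequalities for the martingale part and by the $L^p$-approximation bound (\ref{260731-1}) for $\lambda^{n,\alpha}-\lambda^{\infty,\alpha}$ — precisely the tools already deployed for Lemmas \ref{260730-3}--\ref{260730-6} and Proposition \ref{260730-2}.
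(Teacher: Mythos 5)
Your proposal follows the same route as the paper: rescale, express the QBE as the Bayes functional of the statistical random field $\bbZ_n$, establish the LAMN-type representation (\ref{lamn}) and the polynomial type large deviation inequality (Proposition \ref{260730-2}), and feed these into the abstract QLA theorems of \cite{Yoshida2011}. The completion of the square identifying the limit as $\Gamma^{-1}\Delta=\Gamma^{-1/2}\zeta$ is correct, and your remark that [B5] forces $\lambda_{\min}(\Gamma)\geq 2\chi_0$, hence all moments of $\Gamma^{-1}$, is a useful point the paper leaves implicit.

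There is, however, one QBE-specific ingredient that you have swept into ``the general theory'' and should flag explicitly, since it is the one verification genuinely absent from the QMLE case (Theorem \ref{260731-2}). The paper invokes Lemma 2 of \cite{Yoshida2011} to obtain
\[
\sup_{n\in\bbN}E\Bigl[\Bigl(\int_{u:|u|\leq\delta}\bbZ_n(u)\,du\Bigr)^{-1}\Bigr]<\infty,
\]
which controls the reciprocal of the denominator in the Bayes functional $\hat{u}_n=\bigl[\int\bbZ_n\varpi\bigr]^{-1}\int u\bbZ_n\varpi\,du$. The PLDI keeps the tail $\{|u|\geq r\}$ in check but says nothing about the denominator becoming small; that moment bound is what makes the Bayes functional well behaved in $L^p$, and it is a separate consequence of the conditions already verified, not of the PLDI. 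Relatedly, the paper upgrades the finite-dimensional stable convergence $\bbZ_n\to^{d_{f,s}}\bbZ$ from Lemma \ref{260716-2} to $C(K)$-stable convergence $\bbZ_n|_K\to^{d_s}\bbZ|_K$ by proving tightness of $\{r_n|_K\}_n$ — this functional mode is a hypothesis of Theorem 8 of \cite{Yoshida2011}, and finite-dimensional convergence alone would not suffice. Your sketch alludes to these via ``concentration of $\bbZ_n$'' and ``the general theory,'' but making Lemma 2 of \cite{Yoshida2011} and the $C(K)$-tightness explicit is precisely the content of the paper's short proof.
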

\halflineskip

\noindent 
{\it Proof of Theorems \ref{260731-2} and \ref{260731-3}.} 
Lemma \ref{260716-2} implies the finite-dimensional stable convergence of 
$\bbZ_n\to^{d_{f,s}}\bbZ$ as $n\to\infty$ [the proof is still valid under the present assumptions]. 
Since 
\beas 
\sum_{j=0}^3\sup_{(n,t)\in\bbN\times I}\|\partial_\theta^j\lambda^n(t,\theta)\|_p 
&<&\infty
\eeas
for any $p>1$, 
we obtain the tightness of the residual random fields $\{r_n|_K\}_{n\in\bbN}$ 
restricted to $K$, and hence that of 
$\{\bbZ_n|_K\}_{n\in\bbN}$ in $C(K)$ for every compact set $K$ in $\bbR^\sfp$. 
Consequently, $\bbZ_n|_K\to^{d_s}\bbZ|_K$ in $C(K)$ for compacts $K$. 
Now 
Theorem 4 of \cite{Yoshida2011} gives the proporties of the QMLE. 
Moreover, by Lemma 2 of \cite{Yoshida2011}, 
\beas 
\sup_{n\in\bbN}E\bigg[\bigg(\int_{u:|u|\leq\delta}\bbZ_n(u)\bigg)^{-1}\bigg] &<& \infty.
\eeas
Then Theorem 8 of \cite{Yoshida2011} provides the asymptotic properties of the QBE. 
\qed\halflineskip

\begin{en-text}
\subsection{Localization (especially for QBE)}\koko
By localization, we may assume 
For each $(n,t,s)\in\bbN\times J$, the mapping 
$\Theta\ni\theta\mapsto K^{n,\alpha}_\beta(t,s,\theta)$ is three times differentialble a.s. and 
\beas 
\sum_{j=0}^3
\sup_{n\in\bbN}
\sup_{(t,s)\in J}\sup_{\theta\in\Theta} \|(\partial_\theta)^j K^n(t,s,\theta)\|_q<\infty
\quad 
\eeas
for every $q>1$. 
\end{en-text}

%%%%%%%%%%%%%%%%%%%%%%%%%%%%%
%%%%%%%%%%%%%%%%%%%%%%%%%%%%%
%%%%%%%%%%%%%%%%%%%%%%%%%%%%%
\section{Hawkes type processes}

In this section, the point process regression model will be applied to a feedback system of point processes. 
We will consider the explanatory variables $X^n=n^{-1}N^n$, that is, 
the the process $\lambda^n(t,\theta)$ will be 
\bea\label{hawkes.type}
\lambda^n(t,\theta) &=& g^n(t,\theta)+\int_{\hat{T}_0}^{t-}K^n(t,s,\theta)n^{-1}dN^n_s
\eea
for $t\in I$. 
The Hawkes process is a special case of this model. 
It should be remarked that 
Hawkes processes are often used to describe ergodic systems in long-run, whereas 
we will work with non-ergodic processes with finite time horizon and the intensities diverge.  

Here a QLA will be formulated according to Section \ref{qla-pp}. 
Other formulations are obviously possible under milder assumptions if one applies 
previous sections. %, left to the reader. 
Hereafter, we will consider the case where $g^n(t,\theta)=g(t,\theta)$ and 
$K^n(t,s,\theta)=K(t,s,\theta)$ for simplicity of presentation. 

Consider the following conditions. 
\bd
\item[[H1\!\!]]$_{\bar{j}}$
%{\bf (i)} 
$K(t,s,\theta)$ is an $\bbR_+^\sfd\otimes\bbR_+^{\sfd_0}$-valued  
$\calf\times\bbB(J)\times\bbB(\Theta)$-measurable 
function satisfying the following conditions. 
\bd
\im[(i)] 
For each $(t,\theta)\in I\times\Theta$, the process 
$[\hat{T}_0,t)\ni s\mapsto K(t,s,\theta)$ is %of Class $C^1_B$ and 
$(\calf_s)_{s\in[\hat{T}_0,t)}$-optional. 

\im[(ii)] 
For each $(t,s)\in\bar{\bbN}\times J$, the mapping 
$\Theta\ni\theta\mapsto K(t,s,\theta)$ is $\bar{j}$ times differentialble a.s.,
%
%$\sup_{(s,\theta)\in[\hat{T}_0,t)\times\Theta}|\partial_\theta^j K^n(t,s,\theta)|<\infty$ a.s. for $t\in I$, 
%
and 
\beas 
\sum_{j=0}^{\bar{j}}
\sup_{(\omega,s,t,\theta)\in \Omega\times\bar{\bbN}\times J\times\Theta}|\partial_\theta^iK^n(t,s,\theta)|
&<&
\infty.
\eeas

\im[(iii)] 
For each $(t,\theta)\in I\times\Theta$, the mappings  
$[\hat{T}_0,t)\ni s\mapsto \partial_\theta^i K(t,s,\theta)$ ($i=0,1$)
are differentialble a.s., 
and 
${\colorb\text{ess.sup}_{\omega\in\Omega}\sup_{(t,s,\theta)\in J\times\Theta}}
| \partial_s\partial_\theta^i K(t,s,\theta)|<\infty$ for $i=0,1$. 
% 
%and 
%\beas 
%\sup_{(n,t,\theta)\in \bbN\times I\times\Theta} 
%\sum_{i=0}^1
%\int_{\hat{T}_0}^t\|  \partial_s\partial_\theta^iK^n(t,s,\theta)\|_pds
%&<& \infty
%\eeas
%for every $p>1$. 
\ed
\ed

\begin{description}
\item[[H2\!\!]]$_{\bar{j}}$ 
%$g^n(t,\gamma)=[g^{n,0}(t,\gamma),...,g^{n,\sfd}(t,\gamma)]'$ 
For each $(\alpha)\in\cali$, 
%satisfies is an $\bbR_+^{1+\sfd}$-valued process satisfying 
$g^{\alpha}(t,\theta)$ is an 
nonnegative 
$\calf\times\bbB(I)\times\bbB(\Theta)$-measurable function 
%defined on $(\Omega,\calf,P^n)$, 
for which the following conditions are fulfilled. 
\bd
\im[(i)] For each $(\alpha,\theta)\in\cali\times\Theta$, 
the process $(g^{\alpha}(t,\theta))_{t\in I}$ is predictable.  
%\koko $L^p$-boundedness of $g^{-1}$ here !!

\im[(ii)] For each $t\in I$, the mapping $\Theta\ni\theta\mapsto g(t,\theta)$ is 
$\bar{j}$ times differentiable a.s. and 
\beas 
\sum_{j=0}^{\bar{j}} \sup_{t\in I}\sup_{\theta\in\Theta}
\big\|(\partial_\theta)^j g(t,\theta)\big\|_p
&<&\infty
\eeas
for every $p>1$. 
\ed
\ed

\bd
\item[[H3\!\!]] 
For each $(\omega,n,\alpha,t,\theta)\in\Omega\times\bbN\times\cali\times I\times\Theta$, 
$\lambda^{n,\alpha}(t,\theta)=0$ if and only if $\lambda^{n,\alpha}(t,\theta^{\colorb *})=0$, 
and 
\beas 
\sup_{(n,t,\theta)\in I\times\Theta}\big\|\lambda^{n,\alpha}(t,\theta)^{-1}
1_{\{\lambda^{n,\alpha}(t,\theta)\not=0\}}
\big\|_p 
&<& \infty
\eeas
for every $p>1$ and $\alpha\in\cali$. 
\ed

Let 
\bea\label{lambda_infty0}
\lambda^\infty(t,\theta^*) 
&=& 
{\blue Gg}(\cdot,\theta^*)(t)
\eea
where $G=\sum_{m=0}^\infty K^{\blue m}(\cdot,\cdot,\theta^*)$ 
{\blue with  
%$*$ being the ordinary convolution for functions, 
\beas
K^m(t,s,\theta^*)
&=& 
\int_{\hat{T}_0}^t K(t,t_1,\theta^*)dt_1 
\int _{\hat{T}_0}^{t_1} K(t_1,t_2,\theta^*)dt_2
\\&&
\cdots
\int_{\hat{T}_0}^{t_{m-2}} K(t_{m-2},t_{m-1},\theta^*)dt_{m-1} K(t_{m-1},s,\theta^*). 
\eeas
}
Let
\bea\label{lambda_infty} 
\lambda^\infty(t,\theta)
&=&
g(t,\theta)+\int_{\hat{T}_0}^{t}K(t,s,\theta)\lambda^\infty(s,\theta^*)ds.
\eea
The representation (\ref{lambda_infty}) at $\theta=\theta^*$ is compatible with (\ref{lambda_infty0}). 
Define $\bbY$ and the index ${\colorb \chi_0}$ as before for the present $\lambda^\infty(t,\theta)$.

\bd
\item[[H4\!\!]]
For every $L>0$, there exists a constant $C_L$ such that 
\beas 
P[\chi_0<r^{-1}] &\leq& \frac{C_L}{r^L}
\eeas
for all $r>0$. 
\ed

\halflineskip

\begin{theorem*} 
Under Conditions {\colorr $[H1]_4$, $[H2]_4$, $[H3]$ and  $[H4]$,} the same results as Theorems \ref{260731-2} and \ref{260731-3} 
hold true. 
\end{theorem*}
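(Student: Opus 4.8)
\proof
The plan is to view the Hawkes type model (\ref{hawkes.type}) as a particular point process regression model in the sense of Section \ref{qla-pp}, by taking $X^n_t=n^{-1}N^n_t$ (so $\sfd_0=\sfd$) and $K^n=K^\infty=K$, $g^n=g^\infty=g$, and then to verify Conditions $[B1]_4$, $[B2]_4$, $[B3]$, $[B4]$ and $[B5]$; once this is done, Theorems \ref{260731-2} and \ref{260731-3} apply. With these identifications, $[B1]_4$(i),(ii), $[B2]_4$(i),(ii) and $[B4]$ are immediate from $[H1]_4$(i),(ii), $[H2]_4$(i),(ii) and $[H3]$ (the uniform a.s.\ bounds in $[H1]_4$(ii), $[H2]_4$(ii) being far stronger than the $L^p$ bounds demanded there); $[B1]_4$(iii) follows from the essential boundedness of $\partial_s\partial_\theta^iK$ in $[H1]_4$(iii) after integrating in $s$ over the bounded interval $[\hat{T}_0,t)$; and $[B1]_4$(iv), $[B2]_4$(iii) hold trivially since $K^n\equiv K^\infty$ and $g^n\equiv g^\infty$. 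Finally, $[B5]$ is exactly $[H4]$, provided the $\bbY$ and $\chi_0$ of Section \ref{qla-pp} agree with those defined here: once $X^\infty_t=\int_{\hat{T}_0}^t\lambda^\infty(s,\theta^*)\,ds$ is identified (see below), (\ref{lambda_infty}) is precisely (\ref{251124-201}), and $\lambda^\infty(\cdot,\theta^*)$ given by (\ref{lambda_infty}) coincides with the series (\ref{lambda_infty0}), so the two constructions of $\bbY$ and $\chi_0$ match.

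The only condition requiring real work is $[B3]$: that $X^n_t=n^{-1}N^n_t$ satisfies $\sup_{n,t}\|n^{-1}N^n_t\|_p<\infty$ and $n^\varepsilon\sup_{t\in\hat{I}}\|n^{-1}N^n_t-X^\infty_t\|_p\to0$ for every $p>1$, with $X^\infty_t=\int_{\hat{T}_0}^t\lambda^\infty(s,\theta^*)\,ds$. Since $K\ge0$ and $N^n$ is non-decreasing, $\lambda^n(s,\theta^*)\le g(s,\theta^*)+C\,n^{-1}N^n_{s-}$ with $C=\sup|K|$, so $E[N^n_t]=n\,E\big[\int_{\hat{T}_0}^t\lambda^n(s,\theta^*)\,ds\big]\le n\int_{\hat{T}_0}^t\big(E[g(s,\theta^*)]+C\,n^{-1}E[N^n_s]\big)ds$, and Gronwall's inequality gives $E[N^n_t]\le C'n$; the higher moments $\|N^n_{T_1}\|_p\le C_pn$ then follow by the usual localization-and-bootstrapping argument with the Burkholder--Davis--Gundy inequality applied to the unit-jump martingale $\tilde{N}^n$, exactly as in the proof of Lemma \ref{260803-10} via (\ref{260809-1}); in particular $\sup_{n,t}\|n^{-1}N^n_t\|_p<\infty$. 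Next set $\mu^n_t=n^{-1}N^n_t-X^\infty_t$, so $\mu^n_{\hat{T}_0}=0$, and the decomposition $n^{-1}N^n_t=n^{-1}\tilde{N}^n_t+\int_{\hat{T}_0}^t\lambda^n(s,\theta^*)\,ds$ yields
\beas
\mu^n_t &=& n^{-1}\tilde{N}^n_t+\int_{\hat{T}_0}^t\big(\lambda^n(s,\theta^*)-\lambda^\infty(s,\theta^*)\big)ds,
\\
\lambda^n(s,\theta^*)-\lambda^\infty(s,\theta^*) &=& \int_{\hat{T}_0}^{s-}K(s,r,\theta^*)\,d\mu^n_r.
\eeas
Integration by parts together with $[H1]_4$(ii),(iii) gives $|\lambda^n(s,\theta^*)-\lambda^\infty(s,\theta^*)|\le C''\sup_{r\le s}|\mu^n_r|$, hence $\sup_{t'\le t}|\mu^n_{t'}|\le n^{-1}\sup_{t\in\hat{I}}|\tilde{N}^n_t|+C''\int_{\hat{T}_0}^t\sup_{r\le s}|\mu^n_r|\,ds$, and Gronwall yields $\sup_{t\in\hat{I}}|\mu^n_t|\le e^{C''(T_1-\hat{T}_0)}\,n^{-1}\sup_{t\in\hat{I}}|\tilde{N}^n_t|$. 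Since $\langle\tilde{N}^n\rangle_{T_1}=n\int_{\hat{T}_0}^{T_1}\lambda^n(s,\theta^*)\,ds\le C(n+N^n_{T_1})$ and the jumps of $\tilde{N}^n$ are of unit size, the Burkholder--Davis--Gundy inequality and $\|N^n_{T_1}\|_p\le C_pn$ give $\|n^{-1}\sup_{t\in\hat{I}}|\tilde{N}^n_t|\|_p=O(n^{-1/2})$, so $n^\varepsilon\|\sup_{t\in\hat{I}}|\mu^n_t|\|_p=O(n^{\varepsilon-1/2})\to0$ because $\varepsilon<1/2$. This establishes $[B3]$.

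With $[B1]_4$, $[B2]_4$, $[B3]$, $[B4]$ and $[B5]$ verified, Theorems \ref{260731-2} and \ref{260731-3} yield the asserted limit distributions and convergence of moments for the QMLE and the QBE. The main obstacle is the self-referential estimate in $[B3]$: because the covariate process is $n^{-1}N^n$ itself, one must control $n^{-1}N^n-X^\infty$ by feeding the martingale bound on $\tilde{N}^n$ back through the kernel $K$ in a Gronwall loop, and this is also where the a priori bound $\|N^n_{T_1}\|_p=O(n)$ on the total number of points is needed in order to close the Burkholder--Davis--Gundy recursion. \qed
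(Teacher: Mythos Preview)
Your proof is correct and follows the same overall plan as the paper: reduce to Theorems \ref{260731-2} and \ref{260731-3} by checking $[B1]_4$--$[B5]$ for the choice $X^n=n^{-1}N^n$, $K^n=K^\infty=K$, $g^n=g^\infty=g$, and identify $X^\infty_t=\int_{\hat T_0}^t\lambda^\infty(s,\theta^*)\,ds$; only $[B3]$ needs work. Your verification of $[B1]_4$, $[B2]_4$, $[B4]$, $[B5]$ and the matching of the two definitions of $\lambda^\infty$ (and hence $\bbY$, $\chi_0$) are all in order.

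Where you and the paper differ is the mechanism behind $[B3]$. The paper first proves $\sup_t\|\lambda^n(t,\theta^*)\|_p<\infty$ by a BDG-bootstrap in the exponent (the scheme of (\ref{260809-1})) combined with Gronwall on $t\mapsto E[\lambda^n(t,\theta^*)^{2^k}]$, then uses the resolvent series $G=\sum_{m\ge0}K^m$ to write $\lambda^n(\cdot,\theta^*)=G\big(g+\int K\,n^{-1}d\tilde N^n\big)$ and compare with $\lambda^\infty(\cdot,\theta^*)=Gg$; the $L^p$ bound (\ref{h01}) on $n^{-1/2}\tilde N^n$ then yields the rate and hence $[B3]$. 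You instead run a \emph{pathwise} Gronwall on $\mu^n=n^{-1}N^n-X^\infty$, made possible by the a.s.\ uniform bounds on $K$ and $\partial_sK$ in $[H1]_4$(ii),(iii), obtaining $\sup_t|\mu^n_t|\le C\,n^{-1}\sup_t|\tilde N^n_t|$ before taking $L^p$ norms. This avoids introducing the resolvent $G$ altogether and gives the sharp rate $O(n^{-1/2})$ (the paper states only $n^{1/4}$, which suffices for $\varepsilon<1/2$). Your moment bound $\|N^n_{T_1}\|_p=O(n)$ is obtained by bootstrapping on $N^n$ rather than on $\lambda^n$; both routes work, but yours is more implicit---it would help to spell out the induction step $\|n^{-1}N^n_{T_1}\|_{p/2}<\infty\Rightarrow\|n^{-1}N^n_{T_1}\|_{p}<\infty$ explicitly, since the BDG term $E[(n^{-2}N^n_{T_1})^{p/2}]$ feeds back the lower moment.
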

\proof 
We need to verify [B3] in the present situation where $X^{n,\beta}=n^{-1}N^{n,\beta}$. 
We have 
\bea\label{heq} 
\lambda^{n}(t,\theta) 
&=& 
g(t,\theta)+\int_{\hat{T}_0}^{t-}K(t,s,\theta)n^{-1}dN^n_s
\nn\\&=&
g(t,\theta)+\int_{\hat{T}_0}^{t-}K(t,s,\theta)\lambda^n(s,\theta^*)ds
+\int_{\hat{T}_0}^{t-}K(t,s,\theta)n^{-1}d\tilde{N}^n_s
\eea
By $C_r$-inequality, 
\beas 
E[\lambda^n(t,\theta^{\blue *})^{2k}] 
&\lesssim&
E\big[g^n(t,\theta^{\blue *})^{2k}\big]
+\|K\|_\infty^{\blue 2k}\int_{\hat{T}_0}^t E[\lambda^n(s,\theta^{\blue *})^{2k}]ds
\\&&
+%\|K\|_\infty 
E\l[\l(\int_{\hat{T}_0}^t{\blue K(t,s,\theta^{\blue *})}n^{-1}d\tilde{N}^n_s\r)^{2k}\r]
\eeas
for $k\in\bbN$. 
Then by an essentially the same inequality as (\ref{260809-1}) and by induction, we obtain 
\begin{en-text}
\beas 
E\big[\lambda^n(t,\theta)^{2k}\big] 
&\lesssim&
E\big[g^n(t,\theta)^{2k}\big]
+\sum_{j\leq k}\int_{\hat{T}_0}^t E[\lambda^n(s,\theta)^{2j}]ds
\\&\lesssim&
E\big[g^n(t,\theta)^{2k}\big]
+\int_{\hat{T}_0}^t E[\lambda^n(s,\theta)^{2k}]ds, 
\eeas
\end{en-text}
\beas 
E\big[\lambda^n(t,\theta^{\blue *})^{2^{\blue k}}\big] 
&\lesssim&
{\blue 1+}
E\big[g^n(t,\theta^{\blue *})^{2^{\blue k}}\big]
+\sum_{j\leq k}\int_{\hat{T}_0}^t E[\lambda^n(s,\theta^{\blue *})^{2^{\blue j}}]ds
\\&\lesssim&
{\blue 1+}
E\big[g^n(t,\theta^{\blue *})^{2^{\blue k}}\big]
+\int_{\hat{T}_0}^t E[\lambda^n(s,\theta^{\blue *})^{2^{\blue k}}]ds, 
\eeas
and hence by Gronwall's lemma, 
\bea\label{h09} 
\sup_{t\in I} E\big[\lambda^n(t,\theta^{\blue *})^p\big]  &<& \infty
\eea
for every $p>1$. 
Once again by the uniform version of the scheme of (\ref{260809-1}), 
with the aid of (\ref{h09}), 
we obtain 
\bea\label{h01} 
\sup_{t\in I}E\l[\l|\int_{\hat{T}_0}^tn^{-1/2}d\tilde{N}^n_s\r|^p\r]
&<& \infty
\eea
for every $p>1$. 

Now Equation (\ref{heq}) for $\theta=\theta^*$ gives 
\bea\label{h10}
\lambda^n(t,\theta^*) 
&=& 
G \bigg(g(\cdot,\theta^*)+\int_{\hat{T}_0}^{\cdot-}K(\cdot,s,\theta^*)n^{-1}d\tilde{N}^n_s\bigg)(t). 
\eea
%where $G=\sum_{m=0}^\infty K(\cdot,\cdot,\theta^*)^{*m}$, $*$ being the ordinary convolution for functions. 
Indeed, the convergence (\ref{h10}) holds in $\|\cdot\|_p$-norm uniformly in $t\in I$ 
due to [H2]$_0$ and (\ref{h01}). 
The limit should be 
\beas 
dX^\infty_t/dt &=& \lambda^\infty(t,\theta^*) 
%\>=\> G*g(\cdot,\theta^*)(t)
\eeas
having a representation 
\bea\label{h11}
\lambda^\infty(t,\theta^*) &=& Gg(\cdot,\theta^*)(t)
\eea
deduced from (\ref{h10}). 
Comparing (\ref{h10}) and (\ref{h11}), we obtain 
\bea
n^{1/4}\sup_{t\in I}\|\lambda^n(t,\theta^*)-\lambda^\infty(t,\theta^*)\|_p
&\to&0
\eea
from (\ref{h01}). 
Since $X^n=n^{-1}N$ and 
\beas 
n^{1/4}\|{\colorb X^n_t}-X^\infty_t\|_p 
&\leq&
n^{1/4}\|n^{-1}{\colorb \tilde{N}}^n_t\|_p
%\\&&
+n^{1/4}\l\|\int_{\hat{T}_0}^{t-}(\lambda^n(s,\theta^*)-\lambda^\infty(s,\theta^*))ds\r\|_p, 
\eeas
we have 
$n^{1/4}\|{\colorb X}^n_t-X^\infty_t\|_p \to 0$, which gives [B3]. 
\qed

\begin{en-text}
{From} (\ref{heq}), we see 
\beas 
\lambda^\infty(t,\theta)
&=& 
g(t,\theta) +\int_{\hat{T}_0}^{t-}K(t,s,\theta)\lambda^\infty(s,\theta^*)ds
\eeas
\end{en-text}

\halflineskip
\halflineskip

In what follows, we shall discuss a two-dimensional Hawkes type process 
$N=(N_t)_{t\in[0,T]}$ 
as an illustrative example. 
%That is, $N=(N_t^1,N^2_t)'_{t\in[0,T]}$  
%$N^\alpha=(N^\alpha_t)_{t\in[0,T]}$ ($\alpha=1,2$) with intensity processes  
%
Consider the parametric model of two-dimensional Hawkes process 
with intensity processes
\bea\label{270614-8}
\lambda^n(t,\theta)
&=&
g(t,\gamma)+\int_{{\colorr{\hat{T}_0}}}^{t-}e^{-b(t-s)}An^{-1}dN^n_s
\eea
with $\theta=(\gamma,b,A)$. 
It is remarked that in practice, we need $n\lambda^n(t,\theta)$ to make 
the function $\ell_n(\theta)$, and 
what is estimated is $ng$, not $g$, for the underlying intensity parameter. 
The asymptotics about the estimator of the parameter in $g$ 
is relative in the sense that its value can depends on the value of $n$ the user chooses. 
However it is rather natural because the baseline intensity is very changeable 
even interday and possibly randomly changing. 
There consistent estimation of the baseline intensity has no important meaning. 
We are rather interested in finding relations between $N^n$  and $X^n$, and then 
$g$ serves as a nuisance parameter. 
In statistical theory, 
similar treatments of scaling are found in change point problems and in volatility parameter estimation 
for small diffusions.

At the true values $\theta^*=(b^*,\gamma^*,A^*)$ of the parameters, 
it has two-dimensional intensity process 
%$\lambda(t,\theta)=(\lambda^1(t,\theta),\lambda^2(t,\theta))'_{t\in[0,T]}$ 
expressed by 
\beas 
\lambda^n(t,\theta^*)
&=& 
g_t^*+\int^{t-}_{{\colorr \hat{T}_0}}A^* e^{-b^*(t-s)}n^{-1}dN^n_s,
\eeas
where $g^*_t=g(t,\gamma^*)$ is an $\bbR^2_+$-valued $C^1$-function, 
$A^*\in M_2(\bbR)=\bbR^2\otimes\bbR^2$ and $b^*\in(0,\infty)$. 
Let $C^*=A^*-b^*I$ for two-by-two identity matrix $I$. 
For a matrix $M$, let 
\beas 
G(M)_t 
&=&
e^{tM}\int_{{\colorr \hat{T}_0}}^te^{-sM}g^*_sds.
\eeas

\begin{lemma*}\label{rep_lambda}
\bea\label{lambda101} 
\lambda^\infty(t,\theta^*)
&=&
e^{{\colorr (t-\hat{T}_0)}C^*}g^*_{\hat{T}_0}
+e^{tC^*}\int_{{\colorr \hat{T}_0}}^te^{-sC^*}(\partial_sg^*_s+b^*g^*_s)ds
\>=\>
g^*_t+A^*G(C^*)_t.
\eea
In particular, if $g^*$ is a constant vector $g^*$, and if $C^*$ is invertible, then 
\bea\label{lambda102}
\lambda^\infty(t,\theta^*)
&=&
e^{{\colorr (t-\hat{T}_0)}C^*}(I+b^*C^{*-1})g^*-b^*C^{*-1}g^*.
\eea
\end{lemma*}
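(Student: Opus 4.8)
\emph{Proof proposal.} The plan is to turn the Volterra equation satisfied by $\lambda^\infty(\cdot,\theta^*)$ into a linear matrix ODE, solve it, and then rearrange. Put $\phi(t):=\lambda^\infty(t,\theta^*)$. By (\ref{lambda_infty}) at $\theta=\theta^*$, together with $K(t,s,\theta^*)=A^*e^{-b^*(t-s)}$ and $g(t,\gamma^*)=g^*_t$, we have
\beas
\phi(t) &=& g^*_t+A^*\int_{\hat{T}_0}^t e^{-b^*(t-s)}\phi(s)\,ds\qquad(t\in I),
\eeas
and $\phi$ is the solution furnished by the Neumann series $Gg(\cdot,\theta^*)$ in (\ref{lambda_infty0}); in particular $\phi$ is well defined, continuous, and locally bounded under $[H1]$--$[H2]$, which legitimizes the manipulations below. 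First I would set $u(t):=\int_{\hat{T}_0}^t e^{-b^*(t-s)}\phi(s)\,ds=e^{-b^*t}\int_{\hat{T}_0}^t e^{b^*s}\phi(s)\,ds$, so that $u(\hat{T}_0)=0$ and $u'(t)=-b^*u(t)+\phi(t)$. Substituting $\phi(t)=g^*_t+A^*u(t)$ gives $u'(t)=(A^*-b^*I)u(t)+g^*_t=C^*u(t)+g^*_t$; solving this linear ODE with zero initial value yields $u(t)=\int_{\hat{T}_0}^t e^{(t-s)C^*}g^*_s\,ds=e^{tC^*}\int_{\hat{T}_0}^t e^{-sC^*}g^*_s\,ds=G(C^*)_t$. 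Hence $\phi(t)=g^*_t+A^*G(C^*)_t$, which is the second equality in (\ref{lambda101}).

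Next, for the first equality in (\ref{lambda101}), I would integrate by parts in $\int_{\hat{T}_0}^t e^{-sC^*}\partial_s g^*_s\,ds$ (valid since $g^*$ is a $C^1$-function), obtaining $e^{-tC^*}g^*_t-e^{-\hat{T}_0C^*}g^*_{\hat{T}_0}+C^*\int_{\hat{T}_0}^t e^{-sC^*}g^*_s\,ds$. Multiplying the proposed right-hand side out,
\beas
e^{(t-\hat{T}_0)C^*}g^*_{\hat{T}_0}+e^{tC^*}\int_{\hat{T}_0}^t e^{-sC^*}(\partial_sg^*_s+b^*g^*_s)\,ds
&=& e^{(t-\hat{T}_0)C^*}g^*_{\hat{T}_0}+\big(g^*_t-e^{(t-\hat{T}_0)C^*}g^*_{\hat{T}_0}+C^*G(C^*)_t\big)+b^*G(C^*)_t,
\eeas
the boundary terms cancel, and since $C^*+b^*I=A^*$ this equals $g^*_t+A^*G(C^*)_t=\phi(t)$, as desired.

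Finally, for (\ref{lambda102}), take $g^*_s\equiv g^*$ constant and $C^*$ invertible. Then $\int_{\hat{T}_0}^t e^{-sC^*}\,ds=C^{*-1}(e^{-\hat{T}_0C^*}-e^{-tC^*})$, so $G(C^*)_t=C^{*-1}(e^{(t-\hat{T}_0)C^*}-I)g^*$ (using that $C^{*-1}$ commutes with $e^{sC^*}$), and thus $\phi(t)=g^*+A^*C^{*-1}(e^{(t-\hat{T}_0)C^*}-I)g^*$. Since $A^*C^{*-1}=I+b^*C^{*-1}$ and this matrix commutes with $e^{(t-\hat{T}_0)C^*}$, expanding and collecting the constant terms gives $\phi(t)=e^{(t-\hat{T}_0)C^*}(I+b^*C^{*-1})g^*-b^*C^{*-1}g^*$, which is (\ref{lambda102}).

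There is no substantial obstacle in this argument; the only points requiring care are the justification of the substitution $u$ (guaranteed by the Neumann-series construction of $\lambda^\infty(\cdot,\theta^*)$ and $[H1]$--$[H2]$) and the bookkeeping with the matrix exponentials, which is painless because $A^*$, $C^*$, $C^{*-1}$ and $e^{sC^*}$ all commute, so no ordering issues arise.
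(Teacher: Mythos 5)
Your proof is correct and rests on the same idea as the paper's: reduce the Volterra equation to a constant-coefficient linear ODE in $C^*$ (you apply the integrating factor to the convolution term $u$, whereas the paper differentiates $e^{-tC^*}\lambda^\infty(t,\theta^*)$ directly, so you obtain the second equality of (\ref{lambda101}) first and recover the first by integration by parts, while the paper does this in the opposite order). Both are valid and of the same length, and the verification of (\ref{lambda102}) is the same elementary calculation.
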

\proof 
Equation (\ref{lambda_infty}) becomes 
\beas
\lambda^\infty(t,\theta^*) 
&=&
g^*_t+\int_{{\colorr \hat{T}_0}}^te^{-b^*(t-s)}A^*\lambda^\infty(s,\theta^*)ds. 
\eeas
Therefore
\beas 
\partial_t\big(e^{-tC^*}\lambda^\infty(t,\theta^*)\big)
&=&
e^{-tC^*}\partial_tg^*_t-C^*e^{-tC^*}g^*_t
\\&&
-A^*e^{-tC^*}\int_{{\colorr \hat{T}_0}}^te^{-b^*(t-s)}A^*\lambda^\infty(s,\theta^*)ds.
+e^{-tC^*}A^*\lambda^\infty(t,\theta^*)
\\&=&
e^{-tC^*}\partial_tg^*_t-C^*e^{-tC^*}g^*_t+e^{-tC^*}A^*g^*_t
\\
&=&
e^{-tC^*}\partial_tg^*_t+b^*e^{-tC^*}g^*_t,
\eeas
which gives the first equality of (\ref{lambda101}). 
The second equality is due to integration-by-parts. 
Simple calculus gives (\ref{lambda102}) when $C^*$ is invertible and $g^*_t$ 
is constant. \qed
\halflineskip

{\colorb
\begin{lemma*}\label{270228-1}
Suppose that $b I + C^*$ is invertible. Then
\beas
\lambda^{\infty}(t,\theta)
&=&g(t,\gamma)+A(b-b^*)(b I + C^*)^{-1}G(-b I)_t +AA^*(b I+ C^*)^{-1}G(C^*)_t. \nonumber 
\eeas
\end{lemma*}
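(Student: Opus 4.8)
The plan is to substitute the explicit representation of $\lambda^{\infty}(\cdot,\theta^*)$ from Lemma \ref{rep_lambda} into the defining formula (\ref{lambda_infty}) for $\lambda^{\infty}(\cdot,\theta)$ and to evaluate the two resulting convolution integrals in closed form; the only place the hypothesis is actually used is the inversion of $bI+C^*$ in a resolvent-type integral, everything else being bookkeeping.

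First I would specialize (\ref{lambda_infty}) to the present kernel $K(t,s,\theta)=e^{-b(t-s)}A$, so that
\[
\lambda^{\infty}(t,\theta)=g(t,\gamma)+A\int_{\hat{T}_0}^t e^{-b(t-s)}\lambda^{\infty}(s,\theta^*)\,ds ,
\]
and then insert $\lambda^{\infty}(s,\theta^*)=g^*_s+A^*G(C^*)_s$ from Lemma \ref{rep_lambda}. This splits the integral into two pieces. The first one, $\int_{\hat{T}_0}^t e^{-b(t-s)}g^*_s\,ds$, is by definition $G(-bI)_t$, since $G(M)_t=e^{tM}\int_{\hat{T}_0}^t e^{-sM}g^*_s\,ds$ with $M=-bI$.

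The substantive step is the second piece, $A^*\int_{\hat{T}_0}^t e^{-b(t-s)}G(C^*)_s\,ds$ (the constant matrix $A^*$ and the scalar $e^{-b(t-s)}$ pull out freely). Writing $G(C^*)_s=e^{sC^*}\int_{\hat{T}_0}^s e^{-uC^*}g^*_u\,du$ and applying Fubini's theorem, which is legitimate because the integrand is continuous on the compact simplex $\{\hat{T}_0\le u\le s\le t\}$, gives
\[
\int_{\hat{T}_0}^t e^{-b(t-s)}G(C^*)_s\,ds=\int_{\hat{T}_0}^t\Big(\int_u^t e^{-b(t-s)}e^{sC^*}\,ds\Big)e^{-uC^*}g^*_u\,du .
\]
Since $bI$ commutes with $C^*$ and $bI+C^*$ is invertible, the inner integral equals $e^{-bt}(bI+C^*)^{-1}\big(e^{t(bI+C^*)}-e^{u(bI+C^*)}\big)=(bI+C^*)^{-1}\big(e^{tC^*}-e^{-b(t-u)}e^{uC^*}\big)$; substituting this back and using $e^{uC^*}e^{-uC^*}=I$ collapses the outer integral to $(bI+C^*)^{-1}\big(G(C^*)_t-G(-bI)_t\big)$.

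Finally I would assemble the two pieces, so that $\lambda^{\infty}(t,\theta)=g(t,\gamma)+A\big(I-A^*(bI+C^*)^{-1}\big)G(-bI)_t+AA^*(bI+C^*)^{-1}G(C^*)_t$, and simplify using $A^*=b^*I+C^*$: the coefficient of $G(-bI)_t$ becomes $A\big((bI+C^*-A^*)(bI+C^*)^{-1}\big)=A(b-b^*)(bI+C^*)^{-1}$, which is exactly the asserted identity. Throughout, the matrices standing to the right of $A$ — namely $A^*$, $C^*$, $(bI+C^*)^{-1}$ and the scalar factors — are all functions of $C^*$ and hence commute with one another, so the order of these manipulations is unambiguous; the main (only) obstacle is organizing the Fubini swap and the resolvent identity cleanly, and this is where invertibility of $bI+C^*$ enters.
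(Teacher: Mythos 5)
Your proof is correct and matches the paper's argument essentially step for step: substitute the representation $\lambda^\infty(\cdot,\theta^*)=g^*+A^*G(C^*)$ from Lemma \ref{rep_lambda} into (\ref{lambda_infty}), identify the first convolution as $G(-bI)_t$, evaluate the second via Fubini and the resolvent $(bI+C^*)^{-1}$, and recombine using $C^*=A^*-b^*I$. The one small organizational difference is that you make explicit the observation that $A^*$, $C^*$, and $(bI+C^*)^{-1}$ all commute as functions of $C^*$, which the paper uses tacitly.
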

\proof 
{From} (\ref{lambda_infty}), we have 
\beas 
\lambda^\infty(t,\theta)
&=&
g(t,\gamma)+\int_{\hat{T}_0}^{t}e^{-b(t-s)}A\lambda^\infty(s,\theta^*)ds
\\&=&
g(t,\gamma)+\int_{\hat{T}_0}^{t} e^{-b (t-s)}A
\big\{g^*_s+A^*G(C^*)_s\big\}ds \nonumber 
\\&=&
g(t,\gamma)+AG(-b I)_t 
+ e^{-b t}\int^t_{{\colorr \hat{T}_0}} \int^t_u AA^* e^{s(b I+ C^*)}e^{-uC^*}g_u^*\>dsdu \nonumber
\\&=&
g(t,\gamma) +AG(-b I)_t + e^{-b t}AA^*\int^t_{\hat{T}_0}(b I+ C^*)^{-1}(e^{t(b I+ C^*)}-e^{u(b I+ C^*)})e^{-uC^*}
g_u^*\>du \nonumber 
\\&=&
g(t,\gamma)+AG(-b I)_t +AA^*(b I+ C^*)^{-1}(G(C^*)_t-G(-b I)_t) \nonumber \\
&=&
g(t,\gamma)+A(b-b^*)(b I + C^*)^{-1}G(-b I)_t +AA^*(b I+ C^*)^{-1}G(C^*)_t. 
\eeas
\qed

\begin{en-text}
\beas
\lambda^\infty(t,\theta)
&=&
g(t,\gamma)+\int_{\hat{T}_0}^{t}K(t,s,\theta)\lambda^\infty(s,\theta^*)ds.
\eeas
\begin{eqnarray}
\lambda^{\infty}(t,\theta)&=&g_t+\int^t_0 e^{-\beta (t-s)}A\lambda^{\infty}(s,\theta_{\ast})ds \nonumber \\
&=&g_t+\int^t_0 e^{-\beta (t-s)}A\{g_{s,\ast}+A_{\ast}e^{sC_{\ast}}\int^s_0e^{-uC_{\ast}}g_{u,\ast}du\}ds \nonumber \\
&=&
g(t,\gamma)+AG^{-\beta I}_t + e^{-\beta t}\int^t_0 \int^t_u AA_{\ast} e^{s(\beta I+ C_{\ast})}e^{-uC_{\ast}}g_{u,\ast}dsdu \nonumber \\
&=&
g_t +AG^{-\beta I}_t + e^{-\beta t}AA_{\ast}\int^t_0(\beta I+ C_{\ast})^{-1}(e^{t(\beta I+ C_{\ast})}-e^{u(\beta I+ C_{\ast})})e^{-uC_{\ast}}g_{u,\ast}du \nonumber \\
&=&g_t+AG^{-\beta I}_t +AA_{\ast}(\beta I+ C_{\ast})^{-1}(G^{C_{\ast}}_t-G^{-\beta I}_t) \nonumber \\
&=&g_t+A(\beta-\beta_{\ast})(\beta I + C_{\ast})^{-1}G^{-\beta I}_t +AA_{\ast}(\beta I+ C_{\ast})^{-1}G^{C_{\ast}}_t. \nonumber 
\end{eqnarray}
\end{en-text}

\begin{lemma*}\label{270302-1}
Let $\beta\in\bbR$, $v_1,v_2\in\mathbb{R}^2$ 
and let $B_1,B_2,C\in M_2(\mathbb{R})$. 
Suppose that ${\rm span}\{v_2,Cv_2\}=\mathbb{R}^2$,  
$\beta I+C$ is invertible, 
%$\beta\neq \beta_{\ast}$,
and that 
\begin{equation}\label{assumption-eq}
B_1e^{-\beta t}v_1+B_2e^{tC}v_2=0
\end{equation}
for any $t$ in an interval of $\bbR$. Then $B_2=O$ and $B_1v_1=0$.
\end{lemma*}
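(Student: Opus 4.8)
\noindent\textit{Sketch of the intended proof.}
The plan is to use real-analyticity in $t$ to upgrade the interval hypothesis to an identity valid on all of $\mathbb{R}$, and then to differentiate and specialize at $t=0$, reducing everything to linear algebra in $\mathbb{R}^2$ via the invertibility of $\beta I+C$ and the spanning hypothesis on $v_2,Cv_2$.

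First I would note that the $\mathbb{R}^2$-valued map $t\mapsto B_1e^{-\beta t}v_1+B_2e^{tC}v_2$ is real-analytic on $\mathbb{R}$ (it is an exponential polynomial: every entry of $e^{-\beta t}$ and of $e^{tC}$ extends to an entire function of $t$). Since it vanishes on a nondegenerate interval, it vanishes identically, so
\[
B_1e^{-\beta t}v_1+B_2e^{tC}v_2=0\qquad(t\in\mathbb{R}).
\]
Put $w:=B_1v_1\in\mathbb{R}^2$. Evaluating at $t=0$ gives $B_2v_2=-w$, and since $B_1e^{-\beta t}v_1=e^{-\beta t}w$ the identity above reads $B_2e^{tC}v_2=-e^{-\beta t}w$ for every $t\in\mathbb{R}$.

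Next I would differentiate this last identity in $t$, obtaining $B_2Ce^{tC}v_2=\beta e^{-\beta t}w=-\beta B_2e^{tC}v_2$, i.e. $B_2(\beta I+C)e^{tC}v_2=0$ for all $t$. Differentiating this relation $k$ further times and setting $t=0$ shows that $B_2(\beta I+C)$ annihilates $C^kv_2$ for every $k\ge0$; already $v_2$ and $Cv_2$ span $\mathbb{R}^2$ by hypothesis (and $C^kv_2$ lies in that span by Cayley--Hamilton), so $B_2(\beta I+C)=O$. Since $\beta I+C$ is invertible, $B_2=O$. Feeding $B_2=O$ back into $B_2e^{tC}v_2=-e^{-\beta t}w$ forces $w=0$, that is $B_1v_1=0$, which completes the proof.

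I do not expect a genuine obstacle here; the one step that deserves a word of justification is the passage from ``vanishes on an interval'' to ``vanishes on all of $\mathbb{R}$'', which rests on the analyticity (equivalently, the exponential-polynomial structure) of the function. If one prefers to avoid any global analytic continuation, the same conclusion follows locally: all $t$-derivatives of the expression vanish at an interior point $t_0$ of the interval, and the $k$-th derivative identity $(-\beta)^kB_1e^{-\beta t_0}v_1+B_2C^ke^{t_0C}v_2=0$ yields $B_2(\beta I+C)e^{t_0C}v_2=0$ and $B_2(\beta I+C)Ce^{t_0C}v_2=0$; since $Ce^{t_0C}v_2=e^{t_0C}Cv_2$, the left factor $B_2(\beta I+C)$ then annihilates $e^{t_0C}\operatorname{span}\{v_2,Cv_2\}=\mathbb{R}^2$, and one finishes exactly as above.
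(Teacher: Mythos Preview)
Your proof is correct and follows essentially the same route as the paper's. The only cosmetic difference is that the paper first multiplies the identity by the scalar $e^{\beta t}$ to rewrite it as $B_1v_1+B_2e^{t(\beta I+C)}v_2=0$ before differentiating once in $t$, and then uses that $\operatorname{span}\{v_2,e^{t(\beta I+C)}v_2\}=\mathbb{R}^2$ for small $t$ (which is equivalent to your use of $\operatorname{span}\{v_2,Cv_2\}=\mathbb{R}^2$); your version carries out the same differentiation without this preliminary normalization and reads off the spanning set from successive $t$-derivatives at $t=0$.
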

\proof
\begin{en-text}
Since $\text{span}(v_2,Cv_2)=\bbR^2$ and 
$e^{tC}=I_2+tC+o(t)$ as $t\to0$, 
there exists $t_0>0$ such that 
$
\text{span}\{v_2,e^{t_0C}v_2\}=\bbR^2
$ 
and hence $Bv_1=c_1v_2+c_2e^{t_0C}v_2$ for some $c_1,c_2\in\bbR$. 
Then by (\ref{assumption-eq}), 
\end{en-text}
The equation (\ref{assumption-eq}) holds for all $t\in\bbR$. 
Differentiating $B_1v_1+B_2e^{t(\beta I+C)}v_2=0$ in $t$, we see  
$B_2(\beta I+C)e^{-t(\beta I+C)}v_2=0$ for all $t$. 
Since $\text{span}\{v_2,e^{-t(\beta I+C)}v_2\}=\bbR^2$ for small $t>0$, 
$B_2(\beta I+C)=O$. Therefore $B_2=O$ by the invertibility of $\beta I+C$. 
This entails $B_1v_1=0$ from (\ref{assumption-eq}). 
\qed
\begin{remark*}\rm 
The claim of Lemma \ref{270302-1} is not valid without the invertibility of $\beta I_2+C$. 
For example, 
the assumptions are satisfied for  
\beas 
C=\l[\begin{array}{cc}-1&0\\0&1\end{array}\r],\quad 
B_1=\l[\begin{array}{cc}1&0\\0&1\end{array}\r],\quad 
B_2=\l[\begin{array}{cc}-1&0\\0&0\end{array}\r],\quad 
v_1=\l[\begin{array}{c}1\\0\end{array}\r],\quad 
v_2=\l[\begin{array}{c}1\\1\end{array}\r]
\eeas
and $\beta=1$. 
However, $B_2\not=O$ and $B_1v_1\not=0$. 
\end{remark*}

Suppose that $g^*_t=g(t,\gamma^*)$ is an $\bbR^2$-valued polynomial in $t$ of degree $p$. 
Then, for any $M\in\text{GL}(\bbR^2)$, there exist 
$\bbR^2$-valued smooth (in $M$) mappings $c_\ell(M)$ such that 
\beas 
\partial_s \bigg(\sum_{\ell=0}^p (s-\hat{T}_0)^\ell e^{-(s-\hat{T}_0)M}c_\ell(M)\bigg)
&=& 
e^{-(s-\hat{T}_0)M}g^*_s.
\eeas
Then 
\bea\label{270614-2} 
G(M)_t &=& 
%e^{(t-\hat{T}_0)M}\int_{\hat{T}_0}^t e^{-(s-\hat{T}_0)M)g^*_sds
%e^{(t-\hat{T}_0)M}\bigg[\sum_{\ell=0}^p (s-\hat{T}_0)^\ell e^{-(s-\hat{T}_0)M}c_\ell(M)\bigg]_{\hat{T}_0}^t
%\\&=&
\sum_{\ell=0}^p (t-\hat{T}_0)^\ell c_\ell(M) -e^{(t-\hat{T}_0)M}c_0(M). 
\eea
In particular, 
\beas 
g^*_{\hat{T}_0} &=& c_1(M)-Mc_0(M)
\eeas
and
\beas 
\partial_t^{p+1}G(M)_t &=& 
-M^{p+1}e^{(t-\hat{T}_0)M}c_0(M). 
\eeas

Suppose that $g(t,\gamma)$ is an $\bbR^2$-valued polynomial in $t$ of degree $p$ and 
that 
\bea\label{identify-1}
\lambda^\infty(t,\theta) = \lambda^\infty(t,\theta^*)
\quad (\forall t\in I)
\eea
%for all $t\in I$. 
Differentiating $\lambda^\infty(t,\theta)-\lambda^\infty(t,\theta^*)\equiv0$ $(p+1)$-times in $t$ 
with the expression of $\lambda^\infty(t,\theta)$ given in Lemma \ref{270228-1}, 
we obtain 
\beas 
A(b-b^*)(bI+C^*)^{-1}(-1)^{p+2}b^{p+1}e^{-(t-\hat{T}_0)bI}c_0(-bI)
\\
-(C-C^*)A^*(bI+C^*)^{-1}(C^*)^{p+1}e^{(t-\hat{T}_0)C^*}c_0(C^*)
&=&
0
\eeas
for all $t$. 

Let us consider the following condition. 
\begin{description}
\im[[M\!\!]] 
The parametric model admits a continuous extension to $\overline{\Theta}$ and 
that the following conditions are satisfied on $\overline{\Theta}$\ : 
\bi
\im[(i)]  $b\not=0$, 
\im[(ii)] the matrices $C^*$ and $bI+C^*$ are invertible, % by assumption, 
\im[(iii)] $c_0(-bI)\not=0$, 
\im[(iv)] $c_0(C^*)$ is not an eigenvector of $C^*$. 
\im[(v)] 
$g(t,\gamma)=\sum_{\ell=0}^p a_\ell(\gamma)t^\ell$, and 
$g$ is identifiable, i.e., 
$\gamma=\gamma^*$ if 
$g(t,\gamma)=g(t,\gamma^*)$ for all $t\in I$. 
\im[(vi)] $\inf_{t\in I,\>\theta\in\overline{\Theta}}g(t,\gamma)>0$. 
\im[(vii)] 
The mapping $\gamma\mapsto g(t,\gamma)$ is of class $C^4$ with each derivative 
admitting a continuous extension to $\overline{\Theta}$. 
Moreover 
$\sum_{\alpha=1}^2(\partial_\gamma g^\alpha)^{\otimes2}(t,\gamma^*)$ 
is positive definite for some $t\in I$. 
\ei
\end{description}
\halflineskip

\begin{lemma*}\label{270614-4}
Under $[M]$, 
%$c_0(-b^{\ast}I)\neq 0$ and that $\sum_{\alpha}(\partial_{\gamma}g^{\alpha})^{\otimes 2}(t,\gamma^{\ast})$ is positive definite for some $t$.
the matrix $\Gamma$ is positive definite.
\end{lemma*}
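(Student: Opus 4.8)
\noindent The plan is to exploit that $\Gamma$ has the form of an information matrix,
\[
\Gamma[u^{\otimes2}]\;=\;\sum_{\alpha\in\cali}\int_{T_0}^{T_1}\big(\partial_\theta\lambda^{\infty,\alpha}(t,\theta^*)[u]\big)^2\big(\lambda^{\infty,\alpha}(t,\theta^*)\big)^{-1}\,dt ,
\]
which is automatically nonnegative definite. Since $\lambda^{\infty,\alpha}(t,\theta^*)\geq g^\alpha(t,\gamma^*)>0$ by [M](vi), one has $\Gamma[u^{\otimes2}]=0$ precisely when $t\mapsto\partial_\theta\lambda^{\infty,\alpha}(t,\theta^*)[u]$ vanishes on $I$ for every $\alpha$, so it suffices to prove that the linear map $u\mapsto\big(\partial_\theta\lambda^{\infty,\alpha}(\cdot,\theta^*)[u]\big)_{\alpha\in\cali}$ is injective on $I$. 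Writing $u=(u_\gamma,u_b,u_A)$ in accordance with $\theta=(\gamma,b,A)$, I would first differentiate the closed form of Lemma \ref{270228-1} (valid near $\theta^*$ by [M](ii)); using $b^*I+C^*=A^*$ and that the prefactor $b-b^*$ of the first dynamic term vanishes at $\theta^*$, this should collapse to
\[
\partial_\theta\lambda^\infty(t,\theta^*)[u]\;=\;\partial_\gamma g(t,\gamma^*)[u_\gamma]\;+\;u_b\big(G(-b^*I)_t-G(C^*)_t\big)\;+\;u_A\,G(C^*)_t .
\]

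Next I would insert the representation $G(M)_t=\sum_{\ell=0}^p(t-\hat{T}_0)^\ell c_\ell(M)-e^{(t-\hat{T}_0)M}c_0(M)$ from (\ref{270614-2}), noting that $\partial_\gamma g(\cdot,\gamma^*)[u_\gamma]$ is an $\bbR^2$-valued polynomial of degree at most $p$. The identity $\partial_\theta\lambda^\infty(\cdot,\theta^*)[u]\equiv0$ on $I$ then reads $P(t-\hat{T}_0)-u_b e^{-b^*(t-\hat{T}_0)}c_0(-b^*I)+(u_bI-u_A)e^{(t-\hat{T}_0)C^*}c_0(C^*)=0$ for all $t$, with $P$ a polynomial of degree at most $p$. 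Differentiating $p+1$ times in $t$ eliminates $P$ and leaves $-u_b(-b^*)^{p+1}e^{-b^*\tau}c_0(-b^*I)+(u_bI-u_A)(C^*)^{p+1}e^{\tau C^*}c_0(C^*)=0$ on a nondegenerate interval of $\tau$. I then apply Lemma \ref{270302-1} with $\beta=b^*$, $C=C^*$, $v_1=c_0(-b^*I)$, $v_2=c_0(C^*)$, $B_1=-u_b(-b^*)^{p+1}I$ and $B_2=(u_bI-u_A)(C^*)^{p+1}$; its hypotheses are met because $\text{span}\{v_2,C^*v_2\}=\bbR^2$ ($c_0(C^*)$ is not an eigenvector of $C^*$, and $C^*$ is invertible, by [M](iv) and [M](ii)) and because $\beta I+C=A^*$ is invertible ([M](ii) at $b=b^*$). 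The resulting $B_1v_1=0$ forces $u_b=0$, since $b^*\neq0$ and $c_0(-b^*I)\neq0$ by [M](i) and [M](iii); then $B_2=O$ together with the invertibility of $(C^*)^{p+1}$ gives $u_A=u_bI=O$.

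With $u_b=0$ and $u_A=O$, the surviving identity is $\partial_\gamma g(t,\gamma^*)[u_\gamma]=0$ for all $t\in I$. Evaluating at a point $t_0\in I$ where $\sum_{\alpha=1}^2(\partial_\gamma g^\alpha)^{\otimes2}(t_0,\gamma^*)$ is positive definite ([M](vii)) yields $u_\gamma^\top\big(\sum_{\alpha=1}^2(\partial_\gamma g^\alpha)^{\otimes2}(t_0,\gamma^*)\big)u_\gamma=\sum_{\alpha=1}^2\big(\partial_\gamma g^\alpha(t_0,\gamma^*)[u_\gamma]\big)^2=0$, hence $u_\gamma=0$. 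Thus $u=0$, the derivative map is injective, and $\Gamma$ is positive definite.

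The step I expect to be the main obstacle is twofold. First, the bookkeeping in the explicit differentiation of the formula of Lemma \ref{270228-1}, where one must keep careful track of which contributions survive at $\theta^*$ because of the vanishing prefactor $b-b^*$. Second, and more essentially, casting the residual combination of exponentials into exactly the form required by Lemma \ref{270302-1} once the polynomial part has been stripped off by differentiation; this is precisely where conditions [M](iii)--(iv) are indispensable, for otherwise the scalar exponential $e^{-b^*\tau}$ and the matrix exponential $e^{\tau C^*}$ could conspire with the polynomial and with each other.
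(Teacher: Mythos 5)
Your proposal follows the paper's proof line for line: reduce positivity to injectivity of $u\mapsto\partial_\theta\lambda^\infty(\cdot,\theta^*)[u]$ (using [M](vi) to guarantee the kernel is strictly positive), compute the $\theta$-derivatives from Lemma~\ref{270228-1}, insert the expansion (\ref{270614-2}), differentiate $p+1$ times in $t$ to strip the polynomial, invoke Lemma~\ref{270302-1} with $v_1=c_0(-b^*I)$, $v_2=c_0(C^*)$ and $\beta I+C=A^*$ to force $u_b=0$ and $u_A=O$ via [M](i)--(iv), and finally obtain $u_\gamma=0$ from [M](vii). This is correct and is essentially the argument the paper gives.
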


\begin{proof}
Let $x$ satisfy $x^{\top}\Gamma x=0$. Then it is sufficient to show $x=0$. 
Condition [M] (vi) in particular ensures $\inf_{t\in I}g(t,\gamma^*)>0$, therefore 
$\inf_{t\in I}\lambda^{\infty,\alpha}(t,\theta^*)>0$ for $\alpha=1,2$. 
Thus $\Gamma$ of (\ref{270614-1}) is well defined and we obtain 
\begin{equation}\label{lambda-eq}
\partial_{\theta}\lambda^{\infty,\alpha}(t,\theta^{\ast})\cdot x=0
\end{equation}
for any $t,\alpha$.
Simple calculations with the formula in Lemma \ref{270228-1} show
\begin{equation*}
\partial_{\gamma}\lambda^{\infty}(t,\theta^{\ast})=\partial_{\gamma}g(t,\gamma^{\ast}), 
\quad \partial_b\lambda^{\infty}(t,\theta_{\ast})=G(-b_{\ast}I)_t-G(C^{\ast})_t,
\end{equation*}
\begin{equation*}
%(\partial_{A_{\alpha 1}}\lambda^{\infty,\alpha}, \partial_{A_{\alpha 2}}\lambda^{\infty,\alpha})^{\top}(t,\theta^{\ast})=G(C^{\ast})_t,
%\quad \partial_{A_{3-\alpha,\alpha'}}\lambda^{\infty,\alpha}(t,\theta^{\ast})=0
(\partial_{A_{\alpha' 1}}\lambda^{\infty,\alpha}, \partial_{A_{\alpha' 2}}\lambda^{\infty,\alpha})^{\top}(t,\theta^{\ast})=\delta_{\alpha,\alpha'}G(C^{\ast})_t
\end{equation*}
for $1\leq \alpha,\alpha'\leq 2$.
Rewriting (\ref{lambda-eq}) with these expressions, we have
\begin{equation*}
\partial_{\gamma}g(t,\gamma^{\ast})\cdot x_{\gamma}+\left(
\begin{array}{ll}
x_{A_{11}} & x_{A_{12}} \\
x_{A_{21}} & x_{A_{22}}
\end{array}
\right)G(C^{\ast})_t+x_b((G(-b_{\ast}I)_t-G(C^{\ast})_t)=0
\end{equation*}
for any $t$, where $x=(x_b,(x_{A_{\alpha,\alpha'}})_{\alpha,\alpha'},x_\gamma)$.
Moreover, by differentiating the both sides of the above equation $p+1$ times with respect to $t$, we obtain
\begin{equation*}
\bigg(x_bI-\left(
\begin{array}{ll}
x_{A_{11}} & x_{A_{12}} \\
x_{A_{21}} & x_{A_{22}}
\end{array}
\right)\bigg)(C^{\ast})^{p+1}e^{(t-\hat{T}_0)C^{\ast}}c_0(C^{\ast})+x_b(-1)^pb^{p+1}e^{-b^{\ast}(t-\hat{T}_0)}c_0(-b^{\ast}I)=0
\end{equation*}
for any $t$. 
Here the expression (\ref{270614-2}) was used. 

Now Lemma \ref{270302-1} and $[M]$ (i)-(iv) yield $x_b=0$ and $x_{A_{\alpha,\alpha'}}=0$ for $1\leq \alpha,\alpha'\leq 2$.
Therefore $\partial_{\gamma}g(t,\gamma^{\ast})\cdot x_{\gamma}=0$ for any $t$ by (\ref{lambda-eq}).
Then [M] (vii) implies $x_{\gamma}=0$. 
\end{proof}

\begin{theorem*}\label{270614-3}
Under Condition $[M]$, the same results as Theorems \ref{260731-2} and \ref{260731-3} 
hold for the Hawkes type model (\ref{270614-8}). 
\end{theorem*}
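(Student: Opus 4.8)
The strategy I would take is to show that Condition $[M]$ implies Conditions $[H1]_4$, $[H2]_4$, $[H3]$ and $[H4]$; the conclusion then follows at once from the theorem for Hawkes type processes stated just before Theorem \ref{270614-3}, whose proof already handles the reduction to $[B1]$--$[B5]$.

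First I would dispatch $[H1]_4$, $[H2]_4$ and $[H3]$ by inspection of the explicit ingredients. For the kernel $K(t,s,\theta)=e^{-b(t-s)}A$ with $\theta=(\gamma,b,A)$: since $\bar{\Theta}$ is compact and $b>0$ on it by $[M]$ (i), the component $b$ ranges over a compact subset of $(0,\infty)$, so $(t-s)^je^{-b(t-s)}$ is bounded uniformly over $(t,s)\in J$ and $\theta\in\bar{\Theta}$ for each $j\le 4$; as $\partial_\theta^jK$ is a finite combination of such factors times the bounded matrix $A$, the bounds in $[H1]_4$ (ii) hold, differentiating once more in $s$ gives $[H1]_4$ (iii), and $[H1]_4$ (i) is trivial since $K$ is deterministic and continuous. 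Likewise $g^\alpha(t,\gamma)=\sum_{\ell=0}^p a_\ell(\gamma)t^\ell$ with $a_\ell\in C^4$ by $[M]$ (v), (vii), which gives $[H2]_4$, and $[M]$ (vi) yields $\underline{g}:=\inf_{I\times\bar{\Theta}}g>0$. Hence $\lambda^{n,\alpha}(t,\theta)\ge g^\alpha(t,\gamma)\ge\underline{g}>0$ for every $\omega$, so $\lambda^{n,\alpha}(t,\theta)^{-1}\le\underline{g}^{-1}$ is bounded by a deterministic constant, which gives $[H3]$.

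The core of the work is $[H4]$. Since the present model is deterministic, $\lambda^\infty(\cdot,\cdot)$, $\bbY$ and the index $\chi_0$ are non-random, so $[H4]$ reduces to $\chi_0>0$ (and then holds with $C_L=\max\{1,\chi_0^{-L}\}$). To establish $\chi_0>0$ I would first note that the integrand of $-\bbY(\theta)$, namely $\lambda-\lambda^*-\lambda^*\log(\lambda/\lambda^*)$ with $\lambda=\lambda^{\infty,\alpha}(t,\theta)$ and $\lambda^*=\lambda^{\infty,\alpha}(t,\theta^*)$, is nonnegative by convexity of $-\log$ and vanishes only when $\lambda=\lambda^*$; as $\lambda^{\infty,\alpha}(\cdot,\theta)$ is continuous in $t$, $\bbY(\theta)=0$ forces $\lambda^\infty(\cdot,\theta)\equiv\lambda^\infty(\cdot,\theta^*)$ on $I$. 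The main step is then to carry the identification argument sketched before $[M]$ to completion: from the displayed $(p+1)$-st derivative identity obtained there via Lemma \ref{270228-1}, Lemma \ref{270302-1}---applicable thanks to $[M]$ (iv) and the invertibility of $bI+C^*$ in $[M]$ (ii)---forces the matrix coefficients of the modes $e^{-(t-\hat{T}_0)b}$ and $e^{(t-\hat{T}_0)C^*}$ to vanish, which after cancelling the invertible factors $(C^*)^{p+1}$, $bI+C^*$ and the nonzero scalar $(-1)^{p+2}b^{p+1}$ gives $(C-C^*)A^*=O$ and $(b-b^*)A(bI+C^*)^{-1}c_0(-bI)=0$. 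Since $[M]$ (ii) is imposed throughout $\bar{\Theta}$, evaluating it at $\theta^*$ shows $A^*=b^*I+C^*$ is invertible, whence $C=C^*$; then $bI+C^*=bI+C=A$, so $[M]$ (ii) at $\theta$ shows $A$ is invertible, the second relation collapses to $(b-b^*)c_0(-bI)=0$, hence $b=b^*$ by $[M]$ (iii), and therefore $A=A^*$. Substituting back leaves $g(\cdot,\gamma)\equiv g(\cdot,\gamma^*)$, and $[M]$ (v) gives $\gamma=\gamma^*$; thus $\bbY(\theta)<0$ for every $\theta\in\bar{\Theta}\setminus\{\theta^*\}$.

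Finally I would combine the two regimes. Continuity of $\lambda^\infty(t,\theta)$ in $\theta$ (inherited from $g$ and $K$) makes $\bbY$ continuous on the compact set $\bar{\Theta}$, so for each $\delta>0$ one has $\sup_{|\theta-\theta^*|\ge\delta}\bbY(\theta)<0$ and hence $-\bbY(\theta)/|\theta-\theta^*|^2$ is bounded below by a positive constant outside the $\delta$-ball. Near $\theta^*$, $\bbY$ is of class $C^2$ (because $\lambda^\infty(t,\theta)$ is $C^4$ in $\theta$ and bounded below by $\underline{g}$, with dominated derivatives), with $\bbY(\theta^*)=0$, $\partial_\theta\bbY(\theta^*)=0$ and $-\partial_\theta^2\bbY(\theta^*)=\Gamma$, which is positive definite under $[M]$ by Lemma \ref{270614-4}; a Taylor expansion then gives $-\bbY(\theta)\ge c\,|\theta-\theta^*|^2$ for $\theta$ in a small ball about $\theta^*$. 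Combining, $\chi_0>0$, so $[H4]$ holds, and the theorem follows from the Hawkes type theorem preceding it. The only genuine obstacle is the identification step; everything else is routine, and the point there is that $[M]$ (ii), being required over all of $\bar{\Theta}$, silently delivers the invertibility of both $A^*$ and $A$.
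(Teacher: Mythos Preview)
Your proof is correct and follows essentially the same approach as the paper's own proof: both establish identifiability via the $(p{+}1)$-fold differentiation and Lemma~\ref{270302-1}, then combine the positive definiteness of $\Gamma$ (Lemma~\ref{270614-4}) near $\theta^*$ with compactness of $\bar{\Theta}$ and continuity of $\bbY$ away from $\theta^*$ to obtain $\chi_0>0$ and hence $[H4]$. Your write-up is in fact more complete than the paper's terse version, since you explicitly verify $[H1]_4$, $[H2]_4$, $[H3]$ from the explicit form of $K(t,s,\theta)=e^{-b(t-s)}A$ and the polynomial baseline, and you spell out why $[M]$~(ii) evaluated at $\theta^*$ forces $A^*$ to be invertible---points the paper leaves implicit.
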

\proof 
Under [M] (i)-(v), 
Lemma \ref{270302-1} implies that $C-C^*=O$ and that 
$b-b^*=0$ since $A=bI+C=bI+C^*$ is invertible and $c_0(bI)\not=0$ 
as well as $b\not=0$. 
Therefore $g(t,\gamma)=g(t,\gamma^*)$ from (\ref{identify-1}), which entails  
$\gamma=\gamma^*$. 

Since $\overline{\Theta}$ is compact and $\bbY$ has a continuous extension to it, 
we see from Lemma \ref{270614-4} that  
there exists a positive canstant $c$ such that 
$\bbY(\theta)\leq -c\>|\theta-\theta^*|^2$ for all $\theta\in\Theta$. 
Therefore Condition [H4] is satisfied, obviously.

\begin{example*}\rm 
Let us consider a mapping 
\beas 
g(t,\gamma)&=&
{\bm \gamma}_1(t-T^*)^2+{\bm \gamma}_2
%\l[\begin{array}{c}\gamma_1(t-T^*)^2+\gamma_2\\\gamma_3(t-T^*)^2+\gamma_4\end{array}\r]
\eeas
taking values in $(0,\infty)^2$ for $t\in [T_0,T_1]$, where 
$T^*=(T_1+T_0)/2$ and ${\bm \gamma}_i$ ($i=1,2$) are parameters in $(0,\infty)^2$. 
By definition, 
\beas 
c_2(M) &=& -M^{-1}{\bm \gamma}_1^*
\\
c_1(M) &=& 
%2(T^*-\hat{T}_0)M^{-1}{\bm \gamma}_1^*+2M^{-1}c_2(M)\\&=&
2(T^*-\hat{T}_0)M^{-1}{\bm \gamma}_1^*-2M^{-2}{\bm \gamma}_1^*
\\
c_0(M) &=&
%-M^{-1}\big\{(T^*-\hat{T}_0)^2{\bm \gamma}_1^*+{\bm \gamma}_2^*-c_1(M)\big\}\\
%&=&
-(T^*-\hat{T}_0)^2M^{-1}{\bm \gamma}_1^*
-M^{-1}{\bm \gamma}_2^*
+2(T^*-\hat{T}_0)M^{-2}{\bm \gamma}_1^*-2M^{-3}{\bm \gamma}_1^*.
\eeas
Then, for [M] (iii),  $c_0(-bI)\in(0,\infty)^2$ whenever $\beta>0$. 
Condition [M] (iv) requires that 
\beas 
c_0(C^*) &=&
\big\{-(T^*-\hat{T}_0)^2(C^*)^{-1}
+2(T^*-\hat{T}_0)(C^*)^{-2}-2(C^*)^{-3}\big\}{\bm \gamma}_1^*
-M^{-1}{\bm \gamma}_2^*
\eeas
is not an eigenvector of $C^*$. 
\end{example*}

\begin{en-text}
We shall consider three functions for $g(t,\gamma)$ as follows: 
\begin{description}
\im[[M1\!\!]] $g(t,\gamma)=\l[\begin{array}{c}\gamma_1\\\gamma_2\end{array}\r]$ 
for $\gamma=(\gamma_1,\gamma_2)\in(0,\infty)^2$ 
\im[[M2\!\!]] $g(t,\gamma)=\l[\begin{array}{c}\gamma_1t+\gamma_3\\\gamma_2t+\gamma_4
\end{array}\r]
$ 
for $\gamma=(\gamma_1,\gamma_2,\gamma_3,\gamma_4)\in\bbR^4$ 
such that $\inf_{t\in[T_0,T_1]}g(t,\gamma)>0$. 
\im[[M3\!\!]] $g(t,\gamma)=\l[\begin{array}{c}\gamma_1t^2+\gamma_3t+\gamma_5
\\\gamma_2t^2+\gamma_4t+\gamma_6
\end{array}\r]
$ 
for $\gamma=(\gamma_1,\gamma_2,\gamma_3,\gamma_4,\gamma_5,\gamma_6)\in\bbR^6$ 
such that $\inf_{t\in[T_0,T_1]}g(t,\gamma)>0$. 

\end{description}
The following lemma ensures identifiability of these models. 
\begin{lemma*}\label{hderiv}
\bd
\im[(i)] Given $g(t,\gamma)$ by $[M1]$, suppose that 
$\gamma^*$ is not an eigenvector of $A^*$ and that 
$\beta I_2+C^*$ is invertible. 
Then $\beta=\beta^*$ and $C=C^*$. 

\ed
\end{lemma*}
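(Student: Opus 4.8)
The statement is to be read with the identifiability hypothesis that is implicit in the context, namely $\lambda^\infty(\cdot,\theta)\equiv\lambda^\infty(\cdot,\theta^*)$ on $I$, i.e. condition (\ref{identify-1}); the plan is to extract $\beta=\beta^*$ and $C=C^*$ from this identity by combining the explicit formulas of Lemmas~\ref{rep_lambda} and~\ref{270228-1} with Lemma~\ref{270302-1}. First I would note that for $[M1]$ the baseline $g^*_t\equiv g^*:=\gamma^*$ is constant, so that $G(M)_t=\int_{\hat{T}_0}^te^{(t-s)M}g^*\,ds$ and hence $\partial_tG(M)_t=e^{(t-\hat{T}_0)M}g^*$ for any $M$ (this is elementary and needs no invertibility; I apply it with $M=-\beta I$ and $M=C^*$). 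Differentiating $\lambda^\infty(t,\theta)-\lambda^\infty(t,\theta^*)\equiv0$ once in $t$ removes the constant term $g(t,\gamma)=\gamma$, and by Lemmas~\ref{270228-1} and~\ref{rep_lambda} it gives
\[
(\beta-\beta^*)\,A(\beta I+C^*)^{-1}e^{-\beta(t-\hat{T}_0)}g^*+\bigl[AA^*(\beta I+C^*)^{-1}-A^*\bigr]e^{(t-\hat{T}_0)C^*}g^*=0
\]
for all $t$. Using $A=\beta I+C$, $A^*=\beta^*I+C^*$ and $A^*C^*=C^*A^*$, a short computation yields $AA^*(\beta I+C^*)^{-1}-A^*=(C-C^*)A^*(\beta I+C^*)^{-1}$; since $A^*(\beta I+C^*)^{-1}$ is a polynomial in $C^*$ it commutes with $e^{(t-\hat{T}_0)C^*}$, so after the affine shift $t\mapsto t-\hat{T}_0$ the identity takes the form
\[
B_1e^{-\beta t}v_1+(C-C^*)e^{tC^*}v_2=0\qquad(t\ \text{in an interval}),
\]
with $B_1=(\beta-\beta^*)A$, $v_1=(\beta I+C^*)^{-1}g^*$ and $v_2=A^*(\beta I+C^*)^{-1}g^*$.

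Next I would apply Lemma~\ref{270302-1} with ``$C$''$=C^*$, ``$\beta$''$=\beta$, ``$B_2$''$=C-C^*$. Its two hypotheses are that $\beta I+C^*$ be invertible, which is assumed, and that $\mathrm{span}\{v_2,C^*v_2\}=\bbR^2$. Since $A^*$ is invertible (see below), the map $A^*(\beta I+C^*)^{-1}=(C^*+\beta^*I)(C^*+\beta I)^{-1}$ is an invertible function of $C^*$, hence sends non-eigenvectors of $C^*$ to non-eigenvectors; and $\gamma^*$ is not an eigenvector of $A^*=C^*+\beta^*I$, equivalently not an eigenvector of $C^*$, so $v_2$ is not a $C^*$-eigenvector and the span condition holds. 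Lemma~\ref{270302-1} then yields $C-C^*=O$ and $B_1v_1=(\beta-\beta^*)A(\beta I+C^*)^{-1}g^*=0$. With $C=C^*$ one has $A=\beta I+C^*$, so $A(\beta I+C^*)^{-1}=I$ and $B_1v_1=(\beta-\beta^*)\gamma^*$; as $\gamma^*\in(0,\infty)^2$ is nonzero, $\beta=\beta^*$, and then $\lambda^\infty(\hat{T}_0,\theta)=\gamma=\gamma^*=\lambda^\infty(\hat{T}_0,\theta^*)$ recovers $\gamma=\gamma^*$ as well.

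The step I expect to be delicate is the invertibility of $A^*$ (equivalently $-\beta^*\notin\mathrm{spec}(C^*)$): it is precisely what makes $v_2=A^*(\beta I+C^*)^{-1}\gamma^*$ inherit ``not an eigenvector'' from $\gamma^*$. Without it, $v_2$ can be a $C^*$-eigenvector or even vanish, the span hypothesis of Lemma~\ref{270302-1} fails, and in fact one can construct $\theta\neq\theta^*$ with $\lambda^\infty(\cdot,\theta)\equiv\lambda^\infty(\cdot,\theta^*)$, so the conclusion genuinely needs this. In the present framework I would secure it from the running non-degeneracy — reading the hypothesis ``$\beta I_2+C^*$ invertible'' at the true value $\beta=\beta^*$, where it reduces to invertibility of $A^*$, or invoking the corresponding clause of $[M]$ — and I would state this explicitly at the outset. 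All remaining ingredients are routine: the commutation of polynomials in $C^*$, the algebraic identity for $AA^*(\beta I+C^*)^{-1}-A^*$, and the linear independence of $e^{-\beta t}$ from the coordinate functions of $e^{tC^*}\gamma^*$, the last of which is exactly what Lemma~\ref{270302-1} packages.
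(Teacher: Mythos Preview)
Your argument is correct and is precisely the $p=0$ specialization of the paper's route: differentiate $\lambda^\infty(\cdot,\theta)-\lambda^\infty(\cdot,\theta^*)$ once to kill the constant baseline, use Lemma~\ref{270228-1} and Lemma~\ref{rep_lambda} to put the result in the form of Lemma~\ref{270302-1}, and read off $C=C^*$ and then $\beta=\beta^*$. Your direct formula $\partial_tG(M)_t=e^{(t-\hat T_0)M}g^*$ bypasses the $c_\ell(M)$ machinery the paper introduces for general polynomial $g$, but the two computations coincide once one notes $c_0(M)=-M^{-1}\gamma^*$ in the constant case; the algebraic identity $AA^*(\beta I+C^*)^{-1}-A^*=(C-C^*)A^*(\beta I+C^*)^{-1}$ and the commutation step are exactly what the paper uses (tersely) in the proof of Theorem~\ref{270614-3}.

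Your flag on the invertibility of $A^*$ is well taken and is not redundant with the stated hypothesis ``$\beta I_2+C^*$ invertible'' (which concerns the running $\beta$, not $\beta^*$): without it $v_2=A^*(\beta I+C^*)^{-1}\gamma^*$ can fall into a $C^*$-eigenspace and Lemma~\ref{270302-1} does not apply. The paper supplies this through condition $[M]$~(ii), which requires $bI+C^*$ invertible on all of $\overline{\Theta}$ and hence at $b=b^*$; your suggestion to invoke $[M]$ is the right fix, whereas ``reading the hypothesis at $\beta=\beta^*$'' is circular as you suspect.
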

\end{en-text}

}

\begin{en-text}
\koko
{\colorr 
\bi
\im Write a theorem for $g(t,\gamma)$ 
\im Check the conditions for a quadratic function $g(t,\gamma)$. 
\ei
}

\koko
Denote by $g_*$, $A_*$ and $\beta_*$ the true parameters of 
$g$, $A$ and $\beta$, respectively.

\koko 
\newpage
Verify [H4] here !

\begin{example*}
For $A\in \bbR^\sfd\otimes\bbR^\sfd$, set $C=A-\beta I$, $I$ being the unit matrix. 
Let $g\in\bbR^\sfd$. 
Suppose that $C$ is invertible. 
Define $\lambda:\bbR_+\to \bbR^\sfd\otimes\bbR^\sfd$ by 
\beas 
\lambda_t &=& e^{tC}(I+\beta C^{-1})g-\beta C^{-1}g. 
\eeas
Then $\lambda$ solves 
\beas 
\lambda_t &=& g + \int_0^t e^{-\beta(t-s)}A\lambda_sds. 
\eeas

In our example, 
\bea\label{ip-1}
\lambda^{\infty}(t,\theta) 
& =& 
g + A\int_{\hat{T}_0}^t e^{-\beta(t-s)}\lambda^\infty(s,\theta^*)ds
\eea
and $\lambda^\infty(\cdot,\theta^*)$ solves the equation 
\beas 
\lambda^{\infty}(t,\theta^*) 
& =& 
g^* + A^*\int_{\hat{T}_0}^t e^{-\beta^*(t-s)}\lambda^\infty(s,\theta^*)ds.
\eeas
Namely, 
\bea\label{ip-2}
\lambda^\infty(t,\theta^*) &=& e^{tC^*}(I+\beta^* C^{*-1})g^*-\beta C^{*-1}g^*
\eea
if $C^*$ is invertible. 

We consider 
Identifiability problem: 
\bea\label{ip-3}
\lambda^\infty(t,\theta)=\lambda^\infty(t,\theta^*)\quad(\forall t\in I)
&\Iku^?& 
g=g^*,\ \beta=\beta^*,\ A=A^*
\eea
{From} (\ref{ip-3}), we have $g=g^*$. 
Then (\ref{ip-3}) is equivalent to the question 
\bea\label{ip-4}
A\int_{\hat{T}_0}^t e^{-\beta(t-s)}\lambda^\infty(s,\theta^*)ds
=A^*\int_{\hat{T}_0}^t e^{-\beta^*(t-s)}\lambda^\infty(s,\theta^*)ds\quad(\forall t\in I)
&\Iku^?& 
\beta=\beta^*,\ A=A^*
\eea
Let $P^*$ be the projection on $\bbR^\sfd$ to $L[g^*]$, and $P^\perp=I-P^*$. 
For simplicity, suppose $\sfd=2$. 
Write $A=[c_{11}g^*+c_{12}g^\perp,c_{21}g^*+c_{22}g^\perp]'$, where $g^\perp$ is a unit vector orthogonal to $g^*$. 
Accordingly, $A^*$ is given by $A^*=[c_{11}^*g^*+c_{12}^*g^\perp,c_{21}^*g^*+c_{22}^*g^\perp]'$. 
Since $C^*$ commutes with $A^*$, by (\ref{ip-2})
\beas
A^*\lambda^\infty(t,\theta^*) &=& e^{tC^*}(I+\beta^* C^{*-1})A^*g^*-\beta C^{*-1}A^*g^*
\\&=& 
\bigg(e^{tC^*}(I+\beta^* C^{*-1})-\beta C^{*-1}\bigg)
\l[\begin{array}{c}c_{11}^* \\ c_{21}^* \end{array}\r]
|g^*|^2
\eeas

{\colorrz If $g^*$ is an eigen vector of $A^*$, $A^*g^*=\lambda^* g^*$, .... $A$ has still three parameters
\beas
A^*\lambda^\infty(t,\theta^*) &=& e^{tC^*}(I+\beta^* C^{*-1})A^*g^*-\beta C^{*-1}A^*g^*
\\&=& 
\bigg(e^{t(\lambda^*-\beta^*)}(I+\beta^*/\lambda^*)-\beta /\lambda^*\bigg)
\lambda^*g^*
\eeas
In $\sfd=2$, we only have freedom 
%$|g^*|$, the direction of $g^*$, 
$g^*$, $\lambda^*$ and $\beta^*$, for the family $\{\lambda^\infty(\cdot,\theta^*)\}_{\theta^*\in\Theta}$. 

}

\end{example*}

\end{en-text}

\section{{\colorb Appendix}}
{\colorb 
\begin{en-text}
The family 
\beas 
\bigg\{b_n^{-1}\int_{T_0}^{T_1}\int |c^n_\alpha(t,x)|^2\mu^{n,\alpha}(dt,dx)\bigg\}_{n\in\bbN}
\eeas
is tight. 
\im[{\rm (iii)}] As $n\to\infty$, 
\beas 
\sup_{t\in[T_0,T_1]}\int |c^n_\alpha(t,x)|\mu^{n,\alpha}(\{t\},dx) &\to^p& 0
\eeas
\end{en-text}
{\it Proof of Lemma \ref{270730-1}.} 
%For a vector $v^n$ depending on $n$, the $i$-th element of $v$ is denoted by $v^{n,i}$. 
In the present situation, 
\beas 
\langle L^n,L^n\rangle 
&=& 
\sum_\alpha\int_{T_0}^\cdot\int_{\sf E_\alpha}c^{n,i}_\alpha(s,x)^{\otimes2}%c^{n,j}_\alpha(s,x)
\nu^{n,\alpha}(ds,dx). 
\eeas
Therefore, $\langle L^n,L^n\rangle_t\to^p\int_{T_0}^tg_s^{\otimes2}ds$ for each $t$.

Let ${\sf M}$ be an $\F$-locally square-integrable purely discontinous martingale. 
For $\ep>0$, there is a canonical representation 
\beas 
{\sf M}_t &=& B^\ep_t+{\sf M}^{\overline{\ep}}_t
+{\sf M}^{\underline{\ep}}_t
\eeas
\begin{en-text}
\beas 
{\sf M}_t &=& B^\ep_t+\int_{T_0}^t \int z1_{\{|z|\leq\ep\}}\>(\mu-\nu)(ds,dz)
+\int_{T_0}^tz1_{\{|z|>\ep\}}\mu(ds,dx)
\eeas
\end{en-text}
with 
$
{\sf M}^{\underline{\ep}}_t =\int_{T_0}^t \int z1_{\{|z|>\ep\}}\>\mu(ds,dz)
$ 
and 
$
{\sf M}^{\overline{\ep}}_t = \int_{T_0}^t \int z1_{\{|z|\leq\ep\}}\>(\mu-\nu)(ds,dz)
$, 
\begin{en-text}
\beas 
{\sf M}^{\underline{\ep}} &=& B^\ep_t + \int_{T_0}^t \int z1_{\{\ep<|z|\leq1\}}\>(\mu-\nu)(ds,dz)
\eeas
and 
\beas 
{\sf M}^{\overline{\ep}} &=& \int_{T_0}^t \int z1_{\{|z|\leq\ep\}}\>(\mu-\nu)(ds,dz).
\eeas
\end{en-text}
where $B^\ep_t$ is a predictable bounded variational process and $\mu=\mu^M$ is the integer-valued random measure of jumps of $M$ and compensated by $\nu$. 
%Extend $\tilde{N}^{n,\alpha}$ to $[\hat{T}_0,T_1]$ by $\tilde{N}^{n,\alpha}_{t\wedge T_0}$. 

Since a sequence of predictable times exhausts the jumps of $B^\ep$ and 
$\Delta L^n_T=0$ for any predictable time $T$ for the quasi-left continuous $L^n=(L^{n,i})_i$, 
we have $[L^{n,i},B^\ep]=0$. 
Then the variation 
\bea\label{270906-1}
{\rm Var}\l[L^{n,i},{\sf M}\r]
&=&
{\rm Var}\l[L^{n,i},{\sf M}^{\underline{\ep}}\r]
+{\rm Var}\l[L^{n,i},{\sf M}^{\overline{\ep}}\r]
\nn\\&\leq&
\sum_{s\leq\cdot}|\Delta L^{n,i}_s||\Delta{\sf M}^{\underline{\ep}}_s|
\>1_{\{|\Delta{\sf M}^{\underline{\ep}}_s|>\ep\}}
+[L^{n,i},L^{n,i}]^{1/2}\l[{\sf M}^{\overline{\ep}},{\sf M}^{\overline{\ep}}\r]^{1/2}
\nn\\&\leq&
\sup_{s\leq T_1}|\Delta L^{n,i}_s|\>
\l(2\sup_{s\leq T_1} |{\sf M}_s|\r)\>\#\{s\leq T_1;\>|\Delta{\sf M}^{\underline{\ep}}_s|>\ep\}
\nn\\&&
+[L^{n,i},L^{n,i}]^{1/2}\l[{\sf M}^{\overline{\ep}},{\sf M}^{\overline{\ep}}\r]_{T_1}^{1/2}.
\eea

For $\eta>0$, 
\beas 
\sup_s|\Delta L^n_s|^2
&\leq&
\eta^2+\sup_s\l(|\Delta L^n_s|^21_{\{|\Delta L^n_s|\geq \eta\}}\r)
\\&\leq&
\eta^2+\sum_\alpha\int_{T_0}^{T_1}|c^n_\alpha(s,x)|^2
1_{\{|c^n_\alpha(s,x)|\geq \eta\}}
(\mu^{n,\alpha}+\nu^{n,\alpha})(ds,dx)
\eeas
Thus, the Lenglart inequality gives 
$\sup_s|\Delta L^n_s|\to^p0$ as $n\to\infty$ under (iii). 

Moreover, the family 
\beas
[L^{n,i},L^{n,i}]_{T_1} &=& 
\sum_\alpha\int_{T_0}^{T_1}\int |c^{n,\alpha}(s,x)|^2\mu^{n,\alpha}(ds,dx)
\qquad (n\in\bbN)
\eeas
is stochastically bounded due to (ii) and the Lenglart inequality. 
By definition, $\lim_{\ep\down0}\l[{\sf M}^{\overline{\ep}},{\sf M}^{\overline{\ep}}\r]_{T_1}=0$ a.s. 
These properties applied to (\ref{270906-1}) entail 
\bea\label{270906-2}
\text{Var}[L^{n,i},{\sf M}]_{T_1}\to^p0
\eea
as $n\to\infty$. 

Since {\colorc the signed measure} 
$\langle L^{n,i},{\sf M}\rangle <\!< \langle {\sf M},{\sf M}\rangle$, 
there is a predictable process $d\langle L^{n,i},{\sf M}\rangle/d\langle {\sf M},{\sf M}\rangle(t)$ 
that is a version of the Radon-Nikodym derivative. 
Define an increasing process $\langle L^{n,i},{\sf M}\rangle^+$ by 
\beas 
\langle L^{n,i},{\sf M}\rangle^+_t 
&=& 
\int_{T_0}^t 
1_{\l\{\frac{d\langle L^{n,i},{\sf M}\rangle}{d\langle {\sf M},{\sf M}\rangle}(s)\geq0\r\}}
d\langle L^{n,i},{\sf M}\rangle_s
\>\equiv\>
\int_{T_0}^t 
1_{\l\{\frac{d\langle L^{n,i},{\sf M}\rangle}{d\langle {\sf M},{\sf M}\rangle}(s)\geq0\r\}}
\frac{d\langle L^{n,i},{\sf M}\rangle}{d\langle {\sf M},{\sf M}\rangle}(s)
d\langle {\sf M},{\sf M}\rangle_s. 
\eeas
The process $\langle L^{n,i},{\sf M}\rangle^+$ is L-dominated by the optional process
$\text{Var}[L^{n,i},{\sf M}]$. 
\begin{en-text}
\beas 
\int_{T_0}^\cdot 
1_{\l\{\frac{d\langle L^{n,i},{\sf M}\rangle}{d\langle {\sf M},{\sf M}\rangle}(s)\geq0\r\}}
d\text{Var}[ L^{n,i},{\sf M}]_s. 
\eeas
\end{en-text}
In fact, there exists a reducing sequence $\tau_m$ of stopping times for which 
both $\text{Var}\langle L^{n,i},{\sf M}\rangle$ and $\text{Var}[ L^{n,i},{\sf M}]$ are integrable. 
Then for any stopping time $\tau$, 
\beas 
E[\langle L^{n,i},{\sf M}\rangle^+_{\tau_m\wedge \tau}]
&=&
E\l[\int_{T_0}^{\tau_m\wedge \tau}
1_{\l\{\frac{d\langle L^{n,i},{\sf M}\rangle}{d\langle {\sf M},{\sf M}\rangle}(s)\geq0\r\}}
d[ L^{n,i},{\sf M}]_s\r]
\\&\leq&
E\l[\int_{T_0}^{\tau_m\wedge \tau}
1_{\l\{\frac{d\langle L^{n,i},{\sf M}\rangle}{d\langle {\sf M},{\sf M}\rangle}(s)\geq0\r\}}
d\text{Var}[ L^{n,i},{\sf M}]_s\r]
\\&\leq&
E\big[\text{Var}[ L^{n,i},{\sf M}]_{\tau_m\wedge \tau}\big]
\eeas
and let $m\nearrow\infty$ to obtain the L-domination. 
Since we know that 
$\{\sup_s\Delta \text{Var}[ L^{n,i},{\sf M}]_s\}_n$ is stochastically bounded, 
we obtain $\langle L^{n,i},{\sf M}\rangle^+_t\to^p0$ as $n\to\infty$ 
from (\ref{270906-2}) by the Lenglart inequality. 
In the same fashion, we can prove 
\beas 
\langle L^{n,i},{\sf M}\rangle^-_t
&:=&
-\int_{T_0}^t 
1_{\l\{\frac{d\langle L^{n,i},{\sf M}\rangle}{d\langle {\sf M},{\sf M}\rangle}(s)<0\r\}}
d\langle L^{n,i},{\sf M}\rangle_s
\>\to^p0
\eeas
Thus we obtained $\langle L^{n,i},{\sf M}\rangle_t\to^p0$. 

\begin{en-text}
Therefore 
${\rm Var}[n^{-1/2}\tilde{N}^{n,\alpha},{\sf M}]_{T_1} \to^p0$, and hence 
${\rm Var}[L^n,{\sf M}]_{T_1} \to^p0$, consequently 
$\langle L^n,{\sf M}\rangle_t \to^p0$
 as $n\to\infty$ for every $t$. 
\end{en-text}
 Obviously,  
$\langle L^n,{\sf M}\rangle=0$ for $\F$-continuous martingales ${\sf M}$. 
Under the properties shown above, the result follows from Jacod's theorem. 
\qed
}

%%%%%%%%%%%%%%%%%%%%%%%%%%%%

%\bibliographystyle{plain}
%\bibliography{bibtex20080401}
% BibTeX users please use one of
%\bibliographystyle{spbasic}      % basic style, author-year citations
\bibliographystyle{spmpsci}      % mathematics and physical sciences
\bibliography{bibtex-20151019-20151106}   % name your BibTeX data base

\end{document}